\documentclass[11pt]{amsart}
\usepackage{txfonts}
\usepackage{amscd,amssymb,amsmath,amsbsy,amsthm}
\usepackage[colorlinks,plainpages,backref,urlcolor=blue,breaklinks]{hyperref}
\usepackage{tikz}
\usetikzlibrary{cd,calc,intersections}
\usetikzlibrary{svg.path}
\usepackage{enumitem}
\usepackage{nccmath, relsize}
\usepackage{xfrac}
\usepackage{microtype}
\usepackage{verbatim}
\usepackage[all]{xy}
\usepackage{bm}
\usepackage{tikz-cd} 
\usepackage{graphicx} 

\usepackage{caption}
\usepackage{subcaption} 
\usepackage{colortbl}
\usepackage{color}

\newtheorem{theorem}{Theorem}[section]
\newtheorem{lemma}[theorem]{Lemma}
\newtheorem{corollary}[theorem]{Corollary}
\newtheorem{proposition}[theorem]{Proposition}

\theoremstyle{definition}
\newtheorem{definition}[theorem]{Definition}
\newtheorem{example}[theorem]{Example}
\newtheorem{remark}[theorem]{Remark}
\newtheorem{question}[theorem]{Question}

\newtheorem{theoremalph}{Theorem}

\newcommand{\N}{\mathbb{N}}
\newcommand{\Z}{\mathbb{Z}}
\newcommand{\Q}{\mathbb{Q}}
\newcommand{\R}{\mathbb{R}}
\newcommand{\C}{\mathbb{C}}
\newcommand{\F}{\mathbb{F}}
\newcommand{\K}{\mathbb{K}}

\newcommand{\x}{\mathbf{x}}

\newcommand{\bo}{\mathbf{1}}

\renewcommand{\k}{\Bbbk}

\newcommand{\VV}{\mathcal{V}}

\newcommand{\PP}{\mathcal{P}}

\newcommand{\cJ}{\mathcal{J}}
\newcommand{\cO}{\mathcal{O}}

\newcommand{\rS}{\mathrm{S}}

\DeclareMathOperator{\rank}{rank}

\DeclareMathOperator{\im}{im}
\DeclareMathOperator{\coker}{coker}
\DeclareMathOperator{\codim}{codim}

\DeclareMathOperator{\id}{id}
\DeclareMathOperator{\ab}{{ab}}

\DeclareMathOperator{\ch}{char}

\DeclareMathOperator{\GL}{GL}
\DeclareMathOperator{\Hom}{{Hom}}
\DeclareMathOperator{\Tor}{{Tor}}

\DeclareMathOperator{\inter}{{int}}

\DeclareMathOperator{\init}{in}

\DeclareMathOperator{\Tors}{Tors}

\DeclareMathOperator{\Trop}{Trop}

\DeclareMathOperator{\orb}{orb}
\DeclareMathOperator{\lcm}{{lcm}}

\DeclareMathOperator{\Fitt}{{Fitt}}

\DeclareMathOperator{\Cay}{Cay}

\DeclareMathOperator{\reg}{{reg}}

\newcommand{\surj}{\twoheadrightarrow}
\newcommand{\inj}{\hookrightarrow}
\newcommand\isom{\xrightarrow{ \,\smash{\raisebox{-0.6ex}{\ensuremath{\scriptstyle\simeq}}}\,}}

\newcommand{\longsurj}{\relbar\joinrel\twoheadrightarrow}
\newcommand{\longinj}{\lhook\joinrel\longrightarrow} 
\newcommand{\compl}{\mathrm{c}}

\newcommand{\wX}{\widetilde{X}}
\newcommand{\abs}[1]{\left| #1 \right|}

\definecolor{lime}{HTML}{A6CE39}
\definecolor{lime}{HTML}{A6CE39}
\DeclareRobustCommand{\orcidicon}{
	\begin{tikzpicture}
	\draw[lime, fill=lime] (0,0) 
	circle [radius=0.16] 
	node[white] {{\fontfamily{qag}\selectfont \tiny ID}};
	\draw[white, fill=white] (-0.0625,0.095) 
	circle [radius=0.007];
	\end{tikzpicture}
	\hspace{-2mm}
}
\foreach \x in {A, ..., Z}{\expandafter\xdef\csname orcid\x\endcsname{\noexpand\href{%
https://orcid.org/\csname orcidauthor\x\endcsname}{\noexpand\orcidicon}}}

\title[Twisted homology jump loci and $\Sigma$-invariants]%
{Twisted homology jump loci, twisted Alexander polynomials, and $\Sigma$-invariants}

\author[Yongqiang Liu]{Yongqiang Liu\,$^1$}
\address{Institute of Geometry and Physics, University of Science 
and Technology of China, 96 Jinzhai Road, Hefei Anhui 230026 China} 
\email{liuyq@ustc.edu.cn}
\thanks{$^1$Partially supported by  NSFC grant No. 12571047, the Project of 
Stable Support for Youth Team in Basic Research Field, CAS (YSBR-001) and the 
starting grant from University of Science and Technology of China.}

\author[Alexander~I.~Suciu]{Alexander~I.~Suciu\,$^2$\!\orcidA{}}
\address{Department of Mathematics, Northeastern University, Boston, MA 02115, USA}
\email{\href{mailto:a.suciu@northeastern.edu}{a.suciu@northeastern.edu}}
\urladdr{\href{http://web.northeastern.edu/suciu/}{web.northeastern.edu/suciu/}}
\thanks{$^2$Partially supported by the project ``Singularities and 
Applications" - CF 132/31.07.2023 funded by the European Union - 
NextGenerationEU - through Romania’s National Recovery and Resilience Plan.}

\subjclass[2020]{Primary 
20J05,  %% Homological methods in group theory
57M07.  %% Topological methods in group theory
Secondary 
14T20,  %% Geometric aspects of tropical varieties
20F65,  %% Geometric group theory
32Q15,  %% K\"ahler manifolds
55N25,  %% Homology with local coefficients, equivariant cohomology
57K31, %% Invariants of 3-manifolds (including skein modules, character varieties)
57M05.  %% Fundamental group, presentations, free differential calculus
}

\keywords{Characteristic variety, twisted Alexander polynomial, 
twisted homology jump loci, Bieri--Neumann--Strebel--Renz invariant, 
tropical variety, K\"ahler group, $3$-manifold.}

\date{August 7, 2026}

\begin{document}

\begin{abstract}
We introduce the twisted homology jump loci of a space $X$: the jump
loci for homology with coefficients in rank-one local systems, twisted
by a fixed finite-dimensional representation $\sigma$ of $\pi_1(X)$.
These loci refine the classical characteristic varieties, and their
defining equations in degree one are the twisted Alexander polynomials
of knot theory. Our main theorem is that their tropicalizations bound
from above the Bieri--Neumann--Strebel--Renz (BNSR) $\Sigma$-invariants
of $X$.

Twisting gains real ground. The resulting bound is strictly stronger
than the untwisted tropical bounds obtained from the usual
characteristic varieties: for a one-relator group whose $\Sigma^1$ was
computed by Brown, the untwisted bound excludes only two directions in
$H^1(G;\R)$, whereas the twisted bound determines $\Sigma^1(G)$ exactly.
For a compact orientable $3$-manifold $M$ with toroidal or empty
boundary, the twisted bound is sharp: the union of the twisted tropical
varieties over all finite-image integral representations of $\pi_1(M)$
computes $\Sigma^1(\pi_1(M))$, and hence recovers the fibered faces of
the Thurston norm ball. Sharpness genuinely requires twisting: a
non-fibered class enters the tropical variety through the vanishing of a
twisted Alexander polynomial along it, and the untwisted polynomial need
not vanish. For a compact K\"{a}hler manifold $X$, we prove that the first
twisted Alexander polynomial $\Delta^{\sigma}(X)$ is either $0$ or $1$,
for every representation $\sigma$ over every field, and that
$\Sigma^1(\pi_1(X))$ is controlled by the hyperbolic orbifold
fibrations of $X$ for every $\sigma$. The obstruction to
K\"{a}hlerianity that comes out of this is strictly finer than its
untwisted counterpart: we exhibit groups $G$ with $\Delta(G)=1$ but
$\Delta^{\sigma}(G)\ne 1$, which the twisted test excludes from being
K\"{a}hler and the classical one does not.
\end{abstract}

\maketitle
\tableofcontents

\section{Introduction}
\label{sect:intro}

\subsection{Overview}
\label{intro:overview}
Let $X$ be a connected, finite-type CW-complex with $\pi_1(X)=G$, let
$\k$ be a field, and let $\sigma\colon G\to \GL_r(\k)$ be a
representation, with corresponding local system $L_\sigma$ on $X$.
The rank-one local systems on $X$ are parametrized by the character
group $\Hom(G,\k^{\times})$; tensoring them with $L_\sigma$ embeds this
group into the rank-$r$ representation variety $\Hom(G,\GL_r(\k))$, and
we study the locus along that orbit where homology jumps,
\begin{equation*}
\VV^i(X,L_\sigma)=\big\{\rho \in \Hom(G,\k^{\times}) \mid
H_i(X; L_\sigma\otimes_{\k} L_{\rho}) \ne 0\big\}\, .
\end{equation*}
We call these the \emph{twisted homology jump loci}\/ of $X$.  For
$\sigma$ trivial they are the classical characteristic varieties of $X$;
in degree one, their defining equation is the twisted Alexander
polynomial $\Delta^{\sigma}(X)$ of knot theory.  The twisted jump loci
thus interpolate between two invariants that have been studied
separately, and our purpose here is to show that the interpolation is
useful.

Our main result, Theorem~\ref{thm:bns-twisted-trop-intro}, is that the
tropicalization of $\VV^{\le q}(X,L_\sigma)$---taken with respect to any
rational valuation on $\k$---bounds the BNSR invariant $\Sigma^q(X;\Z)$
from above, subject to a valuation condition on $\sigma$ that holds
automatically whenever $\sigma$ is defined over $\Z$ or has finite
image.  Specializing to $\sigma=\id$ recovers the untwisted tropical
bounds of \cite{Su-mathann} and \cite{LL}.

Three consequences show that the twisting is not a formality.

First, the bound is strictly stronger than what the untwisted theory
gives.  For the one-relator group $G$ of Example~\ref{ex:one-rel},
whose $\Sigma^1$ was computed by Brown \cite{Br}, the untwisted
tropical variety is a single line, and the bound it yields excludes only
two directions on the circle $\rS(G)$; twisting by a $2$-dimensional
representation of $G$ that factors through $S_3$ yields a tropical
variety that determines $\Sigma^1(G)$ exactly.

Second, for $3$-manifolds the twisted bound is not merely stronger but
sharp.  If $M$ is a compact, connected, orientable $3$-manifold with
toroidal or empty boundary, then by Theorem~\ref{thm:sigma-3mfd-intro}
the union of the twisted tropical varieties over all finite-image
integral representations of $\pi_1(M)$ computes $\Sigma^1(\pi_1(M))$
exactly.  By \cite[Thm.~E]{BNS}, this recovers the fibered faces of the
Thurston norm ball.  Twisting is essential to the equality.  A
non-fibered class $\phi$ enters the tropical variety through a
\emph{vanishing}: by \cite{FV13} there is a finite-image $\sigma$ with
$\Delta^{\phi,\sigma}(M)=0$, whence $\VV^1(M,L_\sigma\otimes \C)$
contains the whole subtorus $\im(\phi^*)\cong \C^{\times}$, and so
$\Trop_\C(\VV^1(M,L_\sigma\otimes \C))$ contains the line $\R\cdot\phi$.
Untwisted, the corresponding polynomial need not vanish along a
non-fibered class, and then the tropicalization misses $\phi$ altogether.

Third, running the implication backwards constrains the twisted
invariants of spaces whose BNSR invariants are already known.  For a
compact K\"{a}hler manifold $X$, Delzant's theorem \cite{De10} describes
$\Sigma^1(\pi_1(X))$ in terms of hyperbolic orbifold fibrations, and
feeding this into the main theorem shows that
$\Trop_{\k}(\VV^1(X,L_\sigma))$ is confined to the images of those
fibrations, for every $\sigma$
(Corollary~\ref{cor:bns-kahler-intro}).  Pushing the same argument
through Fox calculus on orbifold fundamental groups yields a sharp
dichotomy: the first twisted Alexander polynomial of a K\"{a}hler group
is always $0$ or $1$ (Theorem~\ref{thm:intro-kahler}).  This is a
genuinely new obstruction to K\"{a}hlerianity, not a repackaging of the
untwisted one---Example~\ref{ex:KT-knot product with WRRAG} exhibits
groups with $\Delta(G)=1$, so that the classical dichotomy of
\cite[Thm.~4.3(3)]{DPS-imrn} says nothing, but with
$\Delta^{\sigma}(G)\ne 1$ for a finite-image $\sigma$ over
$\overline{\F}_{13}$, so that Theorem~\ref{thm:intro-kahler} rules them
out.

\subsection{What is new in the proof}
\label{intro:new}
The bound of Theorem~\ref{thm:bns-twisted-trop-intro} is proved by
relating the twisted chain complex of $X$ to the Novikov--Sikorav
completion $\widehat{\Z G}_{-\chi}$.  The reason the untwisted argument
of \cite{PS-plms,Su-mathann} does not simply carry over to higher rank
is that an element of this completion is an \emph{infinite}\/ sum, and
there is no reason for such a sum to act on a rank-$r$ module: the
individual terms need not tend to zero.  Lemma~\ref{lem:nov-mod}
isolates the two conditions that force convergence---a uniform lower
bound
\begin{equation*}
\upsilon([\sigma(g)]_{ij}) \ge d \qquad \text{for all } g\in G,\
1\le i,j\le r,
\end{equation*}
on the valuations of the matrix entries of $\sigma$, together with
$\upsilon(\det(\sigma(g)))=0$ for all $g\in G$---and shows that under
them the completed tensor product $\hat{\K}^r$ is a left
$\widehat{\Z G}_{-\chi}$-module via $\sigma\otimes \rho$, with
$\chi=r(\upsilon\circ \rho)$.  This \emph{valuation bound}~\eqref{eq:val-bound}
is what lets higher-rank local systems enter the Novikov--Sikorav
framework at all, and it is the new input of the paper.  Granted it, the
K\"{u}nneth spectral sequence argument of \cite[Thm.~10.1]{PS-plms} and
\cite[Thm.~6.4]{Su-mathann} can be run.
Lemma~\ref{lem:novikov-conditions} supplies checkable conditions under
which the hypothesis holds, including the cases when $\sigma$ has finite
image or is defined over~$\Z$; these are exactly the cases used in the
applications.

\subsection{BNSR invariants and homology jump loci}
\label{intro:BNSR}
A landmark group-theoretic invariant was introduced in 1987 by Bieri, Neumann,
and Strebel \cite{BNS}---now known as the BNS invariant. This invariant generalizes
an earlier notion developed by Bieri and Strebel \cite{BS80,BS81} for metabelian groups.
Subsequently, Bieri and Renz \cite{BR}  extended the invariant to higher degrees, while
Farber, Geoghegan, and Sch\"utz \cite{FGS} adapted it from groups to topological spaces.
These extensions are collectively referred to as the Bieri--Neumann--Strebel--Renz (BNSR)
invariants, also known as $\Sigma$-invariants, which capture geometric finiteness properties
of groups and spaces.

Computing the BNSR invariants is notoriously challenging. Even in degree one, explicit descriptions
exist only for limited families of groups, including metabelian groups \cite{BS80,BS81,BG},
one‑relator groups \cite{Br}, right‑angled Artin groups \cite{MMV}, K\"ahler groups \cite{De10},
and pure braid groups \cite{KMM}. To address this difficulty, Papadima--Suciu
\cite{PS-plms} and Suciu \cite{Su-mathann} initiated a program to construct
upper bounds for the BNSR invariants via homology jump loci, which are more
computationally accessible.

The homology jump loci of a CW complex are defined using homology with coefficients in
rank‑one local systems. These loci naturally extend the notion of singular homology by
capturing how the homology varies as the local system ranges over a character variety.
Moreover, they are intimately related to Alexander‑type invariants, most notably the
Alexander polynomial from knot theory.

Papadima and Suciu \cite{PS-plms} first obtained an upper bound for the
BNSR invariants by means of the exponential tangent cones of the homology jump loci.
Subsequently, Suciu \cite{Su-mathann} refined this bound using the
tropical variety associated to the homology jump loci with complex coefficients. A crucial
advantage of the tropical approach is that the tropical variety detects positive‑dimensional
translated components of these jump loci that do not pass through the origin, whereas the
exponential tangent cone only captures the analytic germ at the origin of the jump loci.
More recently, Liu and Liu \cite{LL} further improved the bound by
employing the tropical variety associated to the homology jump loci with coefficients in
arbitrary fields as well as with integer coefficients. For metabelian groups, this bound
at degree one coincides with the bound obtained by Bieri, Groves, and Strebel \cite{BS80,BS81,BG}
and is therefore sharp. In general, however, these bounds are not sharp.

In knot theory, when the Alexander polynomial fails to resolve a given problem, a common
strategy is to consider non‑abelian invariants obtained by twisting with a linear representation
of the fundamental group---these are known as twisted Alexander polynomials. Inspired by
this idea, we introduce a twisted version of the homology jump loci, which generalizes the
twisted Alexander polynomials of knots. Using these twisted jump loci, we obtain sharper
tropical upper bounds for the BNSR invariants. Conversely, for spaces or groups whose
BNSR invariants are already understood---such as K\"ahler groups---this relationship
allows us to derive non‑trivial constraints on their twisted jump loci and twisted Alexander
polynomials.

\subsection{The main theorem}
\label{main:intro}
Let $X$ be a connected finite CW-complex with $\pi_1(X)=G$.
In this paper, \textit{we always assume that $\dim H^1(X; \R)>0$}.
Let $\rS(G)$ denote the unit sphere in $\Hom(G,\R) \cong H^1(X;\R)$,
and for a field $\k$ with a possibly trivial valuation $\upsilon$,
let $\Trop_{\k,\upsilon}(\VV^{\le q}(X,L_\sigma)) \subset H^1(X;\R)$
denote the tropicalization of the $q$-th twisted homology jump loci
associated to a representation $\sigma\colon G \to \GL_r(\k) $
(see Section~\ref{sect:twist-cjl} for precise definitions).
We work with both the homological invariants $\Sigma^q(X;\Z)$
of \cite{FGS} and the BNS set $\Sigma^1(G)$; recall
from \cite[\S 1.3]{BR} that these are related by
$\Sigma^1(G)=-\Sigma^1(G;\Z)$, as reviewed in
Section~\ref{sec:bnsr}. The sign is immaterial whenever the
invariant is antipodally symmetric, as it is for $3$-manifold
groups, but not in general; compare Example~\ref{ex:one-rel}.

The central result of the paper is the following tropical bound,
which strengthens the untwisted bounds of \cite{Su-mathann,LL} by
allowing the local system to range over representations of any rank.

\begin{theoremalph}[Theorem~\ref{thm:bns-twisted-trop}]
\label{thm:bns-twisted-trop-intro}
Let $X$ be a connected finite CW-complex with $\pi_1(X)=G$.
Fix a field $\k$ with a rational valuation
$\upsilon$ (i.e., $\upsilon(\k^{\times})\subseteq \Q$), possibly trivial.
Let $\sigma\colon G\to \GL_r(\k)$ be a representation whose matrix
entries have valuations uniformly bounded below and whose determinant
satisfies $\upsilon(\det(\sigma(g)))=0$ for all $g\in G$;
let $L_\sigma$ denote the corresponding local system on $X$.
Then
\[
\Sigma^q(X;\Z) \subseteq
\rS\big(\!\Trop_{\k,\upsilon}(\VV^{\le q}(X,L_{\sigma}))\big)^{\compl}.
\]
\end{theoremalph}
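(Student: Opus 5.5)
The plan is to argue by contrapositive and follow the untwisted argument of \cite[Thm.~10.1]{PS-plms} and \cite[Thm.~6.4]{Su-mathann}, with the valuation bound as the only new ingredient. We fix a nonzero $\chi\in \Hom(G,\R)$ whose direction lies in $\Sigma^q(X;\Z)$. We must show that $\chi$ is not in $\Trop_{\k,\upsilon}(\VV^{\le q}(X,L_\sigma))$.

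\textbf{Reduction to a single character.} Since $\upsilon$ is rational, the tropical variety is the closure of the image of valuations of points over a valued extension field. By the fundamental theorem of tropical geometry, we may pass to an algebraically closed, complete valued extension $\K\supseteq\k$ whose value group is $\Q$ (or $\R$). Because $\Sigma^q(X;\Z)$ is open, rational directions are dense in it, and the tropical variety is a closed rational polyhedral fan. It therefore suffices to treat a $\chi$ of the form $\chi=r(\upsilon\circ\rho)$ with $\rho\in\Hom(G,\K^\times)$ and $\rho\in\VV^{i}(X,L_\sigma)$ for some $i\le q$, and to derive a contradiction. The rescaling by $r$ only changes the direction by a positive multiple, which is harmless.

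\textbf{Module structure on $\hat\K^r$.} Under the hypotheses (entries of $\sigma(g)$ have valuation at least $d$, and $\upsilon(\det\sigma(g))=0$), I would invoke Lemma~\ref{lem:nov-mod}. It says that the completed module $\hat\K^r$ is a left $\widehat{\Z G}_{-\chi}$-module via $\sigma\otimes\rho$, extending the $\Z G$-action. The heuristic is as follows. The determinant condition together with the entry bound gives, via the adjugate, two-sided control on $\sigma(g)$ and $\sigma(g)^{-1}$. Consequently, the action of $g$ shifts valuations by $\upsilon(\rho(g))$ up to a bounded error. For a Novikov series $\sum n_g g$ with $-\chi(g)\to\infty$ along its support, the terms $\sigma(g)\rho(g)v$ then tend to $0$, so the sum converges in $\hat\K^r$. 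The sign convention must be matched against $\Sigma^1(G)=-\Sigma^1(G;\Z)$, and I would check it carefully here.

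\textbf{Künneth argument.} By Sikorav's criterion in the form of \cite{FGS}, the condition $\chi\in\Sigma^q(X;\Z)$ gives $H_i(\widehat{\Z G}_{-\chi}\otimes_{\Z G} C_*(\wX))=0$ for $i\le q$. Consider the Künneth spectral sequence
\[
E^2_{st}=\Tor_s^{\widehat{\Z G}_{-\chi}}\big(\hat\K^r, H_t(\widehat{\Z G}_{-\chi}\otimes_{\Z G}C_*(\wX))\big)\Rightarrow H_{s+t}(\hat\K^r\otimes_{\Z G}C_*(\wX)).
\]
Its $E^2$ terms vanish for $t\le q$, so $H_i(X;\hat\K^r_{\sigma\otimes\rho})=0$ for $i\le q$. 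Since $\hat\K$ is a field extension of $\K$ and is therefore flat over it, this gives $H_i(X;L_\sigma\otimes L_\rho)\otimes\hat\K=0$. This contradicts $\rho\in\VV^{\le q}$. Finiteness of $X$ is what makes all the complexes involved finite free.

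\textbf{Main obstacle.} I expect the main difficulty to be the convergence and well-definedness in Lemma~\ref{lem:nov-mod}, specifically showing that $\hat\K^r$ is a module over the completion. If that lemma is taken as given, the delicate remaining step is the density/closure reduction. It must ensure that every point of the tropicalization, including non-rational limit directions, is covered. For this I would use that a tropical variety is the closure of its rational points together with the openness of $\Sigma^q$.
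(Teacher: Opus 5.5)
Your proposal is correct and follows essentially the same route as the paper. Rational points of the tropicalization are realized as $\upsilon\circ\rho$ for $\K$-points $\rho$ of $\VV^{\le q}(X,L_\sigma)$ (the paper cites \cite[Thm.~7.11]{Ra12} here); Lemma~\ref{lem:nov-mod} makes $\hat\K^r$ a $\widehat{\Z G}_{-\chi}$-module; the K\"{u}nneth spectral sequence together with the Farber--Geoghegan--Sch\"{u}tz criterion kills $H_{\le q}(X;\hat\K^r_{\sigma\otimes\rho})$; and density of rational points plus closedness of $\Sigma^q(X;\Z)^{\compl}$ handles the remaining directions.

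One correction, on the sign you said you would check: series in $\widehat{\Z G}_{-\chi}$ have support with $\chi(g)\to+\infty$, not $-\chi(g)\to+\infty$, and this is exactly what gives $\upsilon\bigl(n\,\rho(g)[\sigma(g)]_{kj}\bigr)\ge \chi(g)/r+d\to\infty$. Relatedly, to match the left module structure in the lemma, write the spectral sequence as the paper does: take $C_*(\wX;\Z)\otimes_{\Z G}\widehat{\Z G}_{-\chi}$ as a complex of right modules, with $\hat\K^r$ as the left module on the right of the tensor product.
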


For $\sigma=\id$ and $r=1$ this is the bound of
\cite{Su-mathann,LL}.  The gain from higher rank is already visible for
$2$-generator, $1$-relator groups: Example~\ref{ex:one-rel} exhibits a
group for which the untwisted bound is nearly vacuous, excluding two
points of the circle $\rS(G)$, while the bound of
Theorem~\ref{thm:bns-twisted-trop-intro} applied to a rank-$2$
representation with image $S_3$ is an equality.

\begin{corollary}[Corollary~\ref{cor:twist-bound}]
\label{cor:twist-bound-intro}
Let $X$ be a connected finite CW-complex with $\pi_1(X)=G$.
For any representation $\sigma\colon G\to \GL_r(\Z)$,
\[
\Sigma^q(X;\Z) \subseteq
\rS\bigl(\!\Trop_\Z\bigl(\cJ_{\le q}(X,L_{\sigma})\bigr)\bigr)^{\compl}.
\]
\end{corollary}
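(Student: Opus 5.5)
The plan is to deduce the integral statement from Theorem~\ref{thm:bns-twisted-trop-intro}, applied to the base changes of $\sigma$ to fields. This should work once $\Trop_\Z(\cJ_{\le q}(X,L_\sigma))$ is identified with a union of field tropicalizations. Following the untwisted treatment of \cite{LL}, the integral jump locus $\cJ_{\le q}(X,L_\sigma)$ is the subscheme of $\Hom(G,\Z^\times)$-torus $\operatorname{Spec}\Z[H_1(X;\Z)/\Tors]$ cut out by Fitting ideals of the twisted chain complex $C_\bullet(\wX)\otimes_{\Z G}(\Z^r\otimes \Z[H])$. Its tropicalization $\Trop_\Z$ is defined as the union, over all fields $\k$ with a rational valuation $\upsilon$ and all ring maps $\Z\to\k$, of $\Trop_{\k,\upsilon}$ of the base-changed locus. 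I would first record that, by base change of Fitting ideals (or of the jump loci themselves, since homology of a finite free complex jumps exactly where the rank of a specialized boundary matrix drops), the $\k$-points of $\cJ_{\le q}(X,L_\sigma)$ coincide with $\VV^{\le q}(X,L_{\sigma_\k})$. Here $\sigma_\k\colon G\to\GL_r(\Z)\to\GL_r(\k)$ is the induced representation. Hence
\[
\Trop_\Z\bigl(\cJ_{\le q}(X,L_\sigma)\bigr)=\bigcup_{(\k,\upsilon)}\Trop_{\k,\upsilon}\bigl(\VV^{\le q}(X,L_{\sigma_\k})\bigr).
\]

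The second step is to check the hypotheses of Theorem~\ref{thm:bns-twisted-trop-intro} for each $\sigma_\k$; this is exactly the integral case of Lemma~\ref{lem:novikov-conditions}. Write $\sigma_\k(g)$ for the image of an integer matrix. Each entry is the image of an integer, and the valuation restricted to the prime field is either trivial or $p$-adic, so all valuations are nonnegative. The bound therefore holds with $d=0$. Also $\det\sigma(g)=\pm1$, so $\upsilon(\det\sigma_\k(g))=0$. The only subtlety is the case where $\upsilon$ is nontrivial on $\Q$ with $\ch\k=0$: valuations of integers are still $\ge 0$, since any valuation on $\Q$ is $p$-adic or trivial.

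Then Theorem~\ref{thm:bns-twisted-trop-intro} gives $\Sigma^q(X;\Z)\subseteq \rS(\Trop_{\k,\upsilon}(\VV^{\le q}(X,L_{\sigma_\k})))^{\compl}$ for every $(\k,\upsilon)$. Intersecting over all $(\k,\upsilon)$ and using that complement of a union is an intersection of complements, together with $\rS(\bigcup A_i)=\bigcup \rS(A_i)$, yields the corollary.

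The main obstacle is the first step: the precise definition of $\Trop_\Z$ given in Section~\ref{sect:twist-cjl} must match the union description. If $\Trop_\Z$ is instead defined via the scheme over $\operatorname{Spec}\Z$ (e.g.\ using all points valued in valued fields, or via initial ideals over $\Z$ with $p$-adic weights), I would reduce to the union formula by the fundamental theorem of tropical geometry applied fiberwise over $\Q$ and each $\F_p$, possibly passing to algebraic closures. This passage to algebraic closures is harmless, since tropicalization is insensitive to field extension and the valuation hypotheses persist.
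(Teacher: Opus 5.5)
Your overall route is the paper's: pass to field coefficients, check the hypotheses of Lemma~\ref{lem:nov-mod} from integrality, apply Theorem~\ref{thm:bns-twisted-trop} to each field, and reassemble. The integrality check ($\upsilon\ge 0$ on integer entries and $\det\sigma(g)=\pm1$) is fine, and your uniform version replaces the paper's case split into conditions~\ref{c1} and~\ref{c3}. The gap is in the first step, which you yourself flag as the main obstacle. In the paper, $\Trop_\Z(I)$ is not a union of field tropicalizations. It is defined by initial ideals over $\Z$, with no valuation on the coefficients (Definition~\ref{def:tropical-ring}): $\chi\in\Trop_\Z(I)$ iff $\init_\chi(I)\ne\Z H$. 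With this definition your displayed equality is false in general; compare Remark~\ref{rem:tropical over Z}. Take the ideal $I=(f)$ with $f=(1-t_1)^2-3t_2^2$ from Example~\ref{ex:one-rel}. Every $\chi=(x,y)$ with $x>0>y$ has $\init_\chi(f)=-3t_2^2$, so $\init_\chi(I)=(3)\ne\Z H$, and $\Trop_\Z(I)$ contains an open quadrant. Your union, however, is a countable union of tropical curves, so it cannot fill that quadrant. The reasons are: a rational valuation restricts to $\Q$ as trivial or as $c\,\upsilon_p$ with $c\in\Q_{>0}$, and to $\F_p$ as trivial; and the tropicalization of an ideal defined over the prime field depends only on that restriction.

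Your fallback, ``the fundamental theorem applied fiberwise,'' does not bridge the gap either. For a fixed $(\k,\upsilon)$, that theorem only matches the two descriptions of $\Trop_{\k,\upsilon}$. It says nothing about the valuation-free condition $\init_\chi(I)\ne\Z H$, and that condition does not localize to fibres, because taking $\chi$-initial forms over $\Z$ does not commute with reduction mod $p$. Concretely, $\chi=(1,-2)$ lies in none of $\Trop_{\Q,\upsilon_0}(I\otimes\Q)$, $\Trop_{\F_3,\hat{\upsilon}_3}(I\otimes\F_3)$ or $\Trop_{\Q,\upsilon_3}(I\otimes\Q)$, since in each a single monomial attains the minimum. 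It is seen only by a $3$-adic valuation rescaled to match $\chi$, or, which is what matters, after radial projection to the sphere. The missing input is \cite[Prop.~3.9]{LL}, i.e.\ Proposition~\ref{prop:three-trop-union-Z}, which is exactly what the paper's proof invokes. It identifies $\rS(\Trop_\Z(I))$ with the projection of the union of the three types of field tropicalizations, but only after applying $\rS$, read as radial projection. Since the corollary is a statement about $\rS(\cdot)$, and those three tropicalizations are among your pairs $(\k,\upsilon)$, citing it in place of your displayed equality repairs the argument, which then coincides with the paper's. Also keep the jump ideal in $\Z[G_{\ab}]$ with torsion, as the paper does. Working in $\Z[H_1(X;\Z)/\Tors]$ drops the torsion-translated components and yields a weaker statement.
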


Here $\cJ_{\leq q}(X,L_\sigma)$ denotes the twisted homology jump ideal over
$\Z$ (see Section~\ref{sect:twist-cjl}). Integrality of the matrix entries
of $\sigma$ automatically satisfies the valuation hypothesis of
Theorem~\ref{thm:bns-twisted-trop-intro} for all fields
simultaneously, so the $\Z$-tropical variety encodes all
field-valued tropicalizations at once.

\subsection{K\"ahler groups}
\label{apps:intro}

A finitely presented group $G$ is called a \emph{K\"ahler group}
if $G\cong \pi_1(X)$ for some compact K\"ahler manifold $X$.
For K\"ahler groups, the BNS invariant $\Sigma^1(G)$ is completely
described by Delzant's theorem \cite{De10} in terms of hyperbolic
orbifold fibrations of $X$. Combining Theorem~\ref{thm:bns-twisted-trop-intro}
with Delzant's theorem yields the following result, proved in Section~\ref{sect:Kahler}.

\begin{corollary}[Corollary~\ref{cor:bns-kahler}]
\label{cor:bns-kahler-intro}
Let $X$ be a compact K\"ahler manifold with $\pi_1(X)=G$.
For any algebraically closed field $\k$ with trivial valuation
and any representation $\sigma\colon G\to \GL_r(\k)$,
\[
\Sigma^1(G;\Z) \subseteq
\rS \Bigl(\Trop_\k\bigl(\VV^1(X,L_\sigma)\bigr)\Bigr)^{\!\compl},
\]
or equivalently, $\Trop_\k(\VV^1(X,L_\sigma))$ is contained in the
union of the images $\im(f^{*}\colon H^1(C_g;\R)\to H^1(X;\R))$ over
all hyperbolic orbifold fibrations $f\colon X\to C_g$.
\end{corollary}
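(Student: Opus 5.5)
We apply Theorem~\ref{thm:bns-twisted-trop-intro} with $q=1$ and then translate the result through Delzant's description of $\Sigma^1$ for K\"ahler groups. Endow $\k$ with the trivial valuation $\upsilon$, so that $\upsilon(a)=0$ for every $a\in\k^{\times}$.

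\emph{Step 1: the hypotheses hold.} Every matrix entry of $\sigma(g)$ is either $0$, with valuation $+\infty$, or a unit, with valuation $0$. Hence the entries satisfy the uniform lower bound with $d=0$. Since $\det\sigma(g)\in\k^{\times}$, we also have $\upsilon(\det\sigma(g))=0$. The trivial valuation is rational. We may replace $X$ by a finite CW-complex homotopy equivalent to it, because a compact manifold has the homotopy type of a finite complex.

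\emph{Step 2: apply the theorem.} Theorem~\ref{thm:bns-twisted-trop-intro} gives
\[
\Sigma^1(X;\Z)\subseteq \rS\bigl(\Trop_\k(\VV^{\le 1}(X,L_\sigma))\bigr)^{\compl}.
\]
Since $\VV^1\subseteq \VV^{\le 1}$, the same inclusion holds with $\VV^1$ in place of $\VV^{\le 1}$. Next we pass from $X$ to $G$: in degree one, $\Sigma^1(X;\Z)=\Sigma^1(G;\Z)$, because this invariant depends only on the $2$-skeleton and hence only on $\pi_1$. This proves the first displayed inclusion.

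\emph{Step 3: the equivalent form.} Delzant's theorem \cite{De10} states that $\Sigma^1(G)^{\compl}$ is the union of the great spheres $\rS(\im f^*)$ over all hyperbolic orbifold fibrations $f\colon X\to C_g$. Each $\im f^*$ is a linear subspace, so this union is antipodally symmetric. Therefore the sign relation $\Sigma^1(G)=-\Sigma^1(G;\Z)$ from Section~\ref{sec:bnsr} causes no trouble, and $\Sigma^1(G;\Z)^{\compl}=\bigcup_f \rS(\im f^*)$.

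Taking complements in Step 2 gives $\rS(\Trop_\k\VV^1)\subseteq\bigcup_f \rS(\im f^*)$. The tropical variety is a polyhedral fan, and in particular is invariant under positive scaling. Hence every nonzero point $v$ of the tropical variety lies in some $\im f^*$, since its normalization $v/|v|$ does. The origin lies in every $\im f^*$. So $\Trop_\k(\VV^1(X,L_\sigma))\subseteq\bigcup_f\im f^*$.

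For the converse implication, assume the containment in the union of the $\im f^*$. Intersecting with the sphere gives $\rS(\Trop)\subseteq\Sigma^1(G;\Z)^{\compl}$, which is the first inclusion. This direction assumes $\rS(\Trop)$ means the set of unit vectors in the tropical variety, which is how $\rS(\cdot)$ is used here.

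\emph{Main obstacle.} The delicate points are bookkeeping rather than new mathematics:
\begin{itemize}
\item the finite-complex replacement and the reduction of $\Sigma^1(X;\Z)$ to $\Sigma^1(G;\Z)$;
\item the sign convention, which is resolved by the antipodal symmetry of Delzant's description;
\item the cone property of $\Trop$, needed to pass from the sphere to the full subspace.
\end{itemize}
Algebraic closure of $\k$ is used only so that $\VV^1$ and its tropicalization are defined as in Section~\ref{sect:twist-cjl}.
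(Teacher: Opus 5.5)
Your proposal is correct and follows the paper's own argument. The trivial valuation satisfies condition~\ref{c1} of Lemma~\ref{lem:novikov-conditions}, so Theorem~\ref{thm:bns-twisted-trop} applies with $q=1$. Delzant's Theorem~\ref{thm Delzant} then identifies $\Sigma^1(G;\Z)^{\compl}$, and since $\Trop_\k$ is a cone for the trivial valuation, this gives the equivalent linear form.

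Your extra steps ($\VV^1\subseteq\VV^{\le 1}$, $\Sigma^1(X;\Z)=\Sigma^1(G;\Z)$, and the antipodal symmetry) only make explicit what the paper leaves implicit. One small loose end, which the paper's statement shares: if $X$ has no hyperbolic orbifold fibrations, the union $\bigcup_f \im f^*$ is empty. In that case your remark that $0$ lies in every $\im f^*$ does not place the origin of $\Trop_\k$ in the union, so the origin has to be handled by convention.
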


This strengthens results of Suciu \cite{Su-mathann} and
Liu--Liu \cite{LL}, which treated the case $\sigma=\id$.
A further application of Theorem~\ref{thm:bns-twisted-trop-intro}
via Fox calculus on orbifold fundamental groups
gives the following new obstruction to K\"ahlerianity.

\begin{theoremalph}[Theorem~\ref{thm:alexpoly-kahler}]
\label{thm:intro-kahler}
Let $X$ be a compact K\"ahler manifold with $\pi_1(X)=G$.
Then for any representation $\sigma\colon G\to \GL_r(\k)$
over any field $\k$, the first twisted Alexander polynomial
$\Delta^\sigma(X)$ is either $0$ or $1$.
\end{theoremalph}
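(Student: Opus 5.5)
The plan is to argue by contradiction from Corollary~\ref{cor:bns-kahler-intro}. Let $H=H_1(G;\Z)/\mathrm{Tors}$ and $b=\rank H=b_1(X)$, which is even because $X$ is K\"ahler. Recall that $\Delta^\sigma(X)\in \k[H]$ is the generator of the smallest principal ideal containing the first Fitting ideal of the twisted Alexander module $H_1(X;L_\sigma\otimes\k[H])$. It is well defined up to units $c\,t^h$, and $\Delta^\sigma(X)=1$ means it is such a unit.

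Suppose $\Delta^\sigma(X)\neq 0$ and is not a unit. We may pass to the algebraic closure of $\k$: this changes neither the Fitting ideals nor the gcd up to units. Then $\VV^1(X,L_\sigma)\subset(\overline{\k}^{\times})^b$ contains the hypersurface $V(\Delta^\sigma)$, up to the finitely many characters where $H_0$ jumps, and these do not affect dimension counts. Since $\Delta^\sigma$ is not a monomial, its Newton polytope $P$ has positive dimension. For the trivial valuation, $\Trop(V(\Delta^\sigma))$ is the codimension-one skeleton of the normal fan of $P$. This is a rational polyhedral fan of pure dimension $b-1$, and it is not contained in any finite union of subspaces of dimension $\le b-2$.

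By Corollary~\ref{cor:bns-kahler-intro}, this fan lies in the finite union of the subspaces $\im(f^*)$ over the hyperbolic orbifold fibrations $f\colon X\to C$; the union is finite by Delzant's theorem. Hence some $\im(f^*)$ has dimension $\ge b-1$. Since $\dim\im(f^*)=2g$ is even and $b$ is even, in fact $f^*\colon H^1(C;\R)\to H^1(X;\R)$ is an isomorphism. Note that $g\ge1$, and the hyperbolicity of $C$ ensures the orbifold group $\Gamma=\pi_1^{\mathrm{orb}}(C)$ has a presentation
\[
\langle a_1,b_1,\dots,a_g,b_g,c_1,\dots,c_s \mid c_j^{m_j},\ \textstyle\prod[a_i,b_i]\prod c_j\rangle .
\]
The map $f$ has connected fibres, so $f_*\colon G\to\Gamma$ is onto with kernel $K$, and $f_*$ induces $H\cong H_1(\Gamma)/\mathrm{Tors}$. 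It therefore remains to show that $\Delta^\sigma(X)$ is a unit in this situation, which is the desired contradiction.

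For this step I would use the Lyndon--Hochschild--Serre spectral sequence of $1\to K\to G\to\Gamma\to 1$ with coefficients in $M=\k^r\otimes\k[H]$, on which $G$ acts by $\sigma\otimes(\text{abelianization})$. Since $H$ is pulled back from $\Gamma$, $K$ acts on $M$ only through $\sigma$. This gives an exact sequence
\[
H_0\bigl(\Gamma;H_1(K;M)\bigr)\to H_1(G;M)\to H_1\bigl(\Gamma;H_0(K;M)\bigr)\to 0 .
\]
\begin{itemize}
\item For the right-hand term, $H_0(K;M)=(\k^r)_K\otimes\k[H]$, and Fox calculus on the presentation above computes its order. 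The surface relator has Fox derivatives $\partial/\partial a_i$ and $\partial/\partial b_i$ whose images under the twisted abelianization are of the form $\tau(b_i)t_{b_i}-1$ and $1-\tau(a_i)t_{a_i}$, up to invertible matrices. These involve independent variables, so the resulting minors have no common non-unit factor.
\item For the left-hand term, $H_1(K;\k^r)$ is finite-dimensional when $K$ is finitely generated (the orbifold fibration case after a finite cover, by Catanese--Delzant). Then $H_0(\Gamma;H_1(K;M))$ is a quotient of a module of the form $V\otimes\k[H]$ twisted by a finite-dimensional $\Gamma$-representation. Its contribution to the Fitting ideal should again be coprime in the independent variables.
\end{itemize}
Combining the two gives a Fitting ideal of codimension $\ge2$, so $\Delta^\sigma(X)$ is a unit, which is the contradiction.

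I expect this last step to be the main obstacle. Specifically, it is controlling the order of the module $H_0(\Gamma;H_1(K;M))$ when $\sigma$ does not factor through $\Gamma$, and making the gcd computation rigorous rather than heuristic. If $K$ fails to be finitely generated, I would first pass to a finite cover of $X$ where $f$ has no multiple fibres. Then I would compare twisted Alexander polynomials under this cover via the induced representation, using that induction preserves the property of being $0$ or a unit.
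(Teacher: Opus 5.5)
Your reduction is sound and is essentially the paper's first case: you tropicalize $V(\Delta^\sigma)$, invoke Corollary~\ref{cor:bns-kahler-intro}, and use parity of $b_1$ to reach the case where some $f^*\colon H^1(C_g;\R)\to H^1(X;\R)$ is an isomorphism.

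\textbf{The gap is your treatment of the right-hand term $H_1(\Gamma;H_0(K;M))$.} The claim that the Fox-calculus minors have no common non-unit factor is false in general. Write $V=(\k^r)_K\neq 0$.
\begin{itemize}
\item If $g\ge 2$, the presentation has $2g+s$ generators and $s+1$ relators. So $H_1(\Gamma;V\otimes\k[H])$ has rank at least $\dim V\cdot(2g-2)>0$, and its order is $0$.
\item If $g=1$ and some $1+Z_j+\cdots+Z_j^{\mu_j-1}$ is singular, every maximal minor of the twisted $\partial_2$ vanishes. The reason is that the first $s$ column blocks are supported only on the block-diagonal of these matrices.
\end{itemize}
Your bullets never use the standing hypothesis $\Delta^\sigma(X)\neq 0$. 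As written, they would prove that $\Delta^\sigma(X)$ is a unit whenever some $f^*$ is an isomorphism. That is false: take $X=C_g$ with $g\ge 2$, $f=\id$ and $\sigma$ trivial, where $\Delta^\sigma(X)=0$.

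The missing idea is the dichotomy the paper extracts from Proposition~\ref{prop:compact-orbifold-group}. For generic $\rho$,
\[
\dim H_1(\Gamma;V\otimes L_\rho)=\dim V\,(2g-2+s)-\sum_j\rank\bigl(1+Z_j+\cdots+Z_j^{\mu_j-1}\bigr).
\]
\begin{itemize}
\item If this is positive, then $H_1(\Gamma;V\otimes L_\rho)\neq 0$ for every $\rho$. The surjection $H_1(G;M)\twoheadrightarrow H_1(\Gamma;H_0(K;M))$ then forces $\Delta^\sigma(X)=0$.
\item If it is zero, the jump set is finite.
\end{itemize}
In your contradiction framework, you must use this surjection together with $\Delta^\sigma(X)\neq 0$ to exclude the first alternative. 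Only then does your ``coprime minors'' conclusion hold, and what it really means is finite support.

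The left-hand term, which you flag as the main obstacle, needs no gcd computation.
\begin{itemize}
\item The kernel $K$ of $\pi_1(X)\to\pi_1^{\orb}(C_g,\bm{\mu})$ is finitely generated for every orbifold fibration, being the image of the fundamental group of a general fibre. So the finite-cover and induction detour is unnecessary, and your claim that induction preserves the property ``$0$ or a unit'' would itself need proof.
\item Hence $W=H_1(K;L_\sigma)$ is a finite-dimensional $\Gamma$-module, whether or not $\sigma$ factors through $\Gamma$.
\item By right exactness of coinvariants and Nakayama's lemma, the support of $H_0(\Gamma;W\otimes\k[H])$ is the set of $\rho$ with $(W\otimes L_\rho)_\Gamma\neq 0$.
\item For each such $\rho$, $W$ has a one-dimensional quotient isomorphic to $L_{\rho^{-1}}$. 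Distinct characters give linearly independent eigenvectors in $W^*$, so there are at most $\dim W$ such $\rho$.
\item A finite set has codimension $b\ge 2$ in $\Hom^0(G,\k^\times)\cong(\k^\times)^b$, so this term contributes no codimension-one component.
\end{itemize}
Together with the dichotomy above, this gives the paper's conclusion that $\Delta^\sigma(X)$ is $0$ or $1$. Working pointwise in $\rho$, as the paper does, makes both terms transparent.
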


This strengthens \cite[Thm.~4.3(3)]{DPS-imrn}, which
established the same dichotomy for the \emph{untwisted}
Alexander polynomial $\Delta(G)$ (i.e., $\sigma = \id$, $r=1$).
The strengthening is not formal.  In
Example~\ref{ex:KT-knot product with WRRAG} we take $G$ to be the
product of the group of the Conway knot---whose Alexander polynomial is
$1$, but which by Friedl--Kim \cite{FK06} carries a representation into
$\GL_4(\overline{\F}_{13})$ factoring through $S_5$ with non-trivial
twisted Alexander polynomial---with a suitable weighted right-angled
Artin group.  For every such $G$ one has $\Delta(G)=1$, so
\cite[Thm.~4.3(3)]{DPS-imrn} gives no obstruction, whereas
$\Delta^{\sigma}(G)\ne 1$ for a finite-image $\sigma$, and
Theorem~\ref{thm:intro-kahler} shows that $G$ is not K\"ahler.  Thus
Theorem~\ref{thm:intro-kahler} detects non-K\"ahlerianity in cases
inaccessible to the untwisted result.

\subsection{$3$-manifold groups}
\label{apps:3mfd}

For a compact, connected, orientable $3$-manifold $M$ with
toroidal or empty boundary, the BNS invariant $\Sigma^1(\pi_1(M))$
is the projection of the open fibered faces of the Thurston norm
ball \cite[Thm.~E]{BNS}. Using a theorem of Friedl--Vidussi \cite{FV13},
which ensures that every non-fibered class is detected by the
twisted Alexander polynomial for some finite-image representation,
we show in Section~\ref{sect:3-manifold} that the twisted tropical
bound recovers $\Sigma^1$ exactly.

\begin{theoremalph}[Theorem~\ref{thm:sigma-3mfd}]
\label{thm:sigma-3mfd-intro}
Let $M$ be a compact, connected, orientable $3$-manifold with
toroidal or empty boundary, and set $G=\pi_1(M)$. Then
\[
\Sigma^1(G) = \rS\Bigg(\overline{\bigcup_{\sigma}
\Trop_\C\big(\VV^1(M,L_{\sigma}\otimes \C)\big)}\Bigg)^{\compl},
\]
where the union runs over all representations
$\sigma\colon G\to \GL_r(\Z)$ with finite image.
\end{theoremalph}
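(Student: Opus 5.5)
The equality splits into two inclusions, proved separately.

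\emph{Inclusion $\subseteq$.} Let $\sigma\colon G\to\GL_r(\Z)$ have finite image. Give $\C$ the trivial valuation. The entries of $\sigma(g)$ are integers, so their valuations are $\ge 0$, and $\det\sigma(g)=\pm1$ has valuation $0$. Hence the hypotheses of Theorem~\ref{thm:bns-twisted-trop-intro} hold for $\sigma\otimes\C$ with $q=1$. Since $\VV^{\le 1}\supseteq\VV^1$, this gives
\[
\Sigma^1(G;\Z)\cap \rS\bigl(\Trop_\C(\VV^1(M,L_\sigma\otimes\C))\bigr)=\emptyset .
\]
Two inputs convert this into the needed statement.
\begin{enumerate}
\item For $3$-manifold groups $\Sigma^1(G)$ is antipodally symmetric: it is the union of the open cones on the fibered faces, and fibered faces come in $\pm$ pairs. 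Since $\Sigma^1(G)=-\Sigma^1(G;\Z)$, we may replace $\Sigma^1(G;\Z)$ by $\Sigma^1(G)$.
\item $\Sigma^1(G)$ is open in $\rS(G)$, by \cite{BNS}.
\end{enumerate}
An open set disjoint from each member of a family of sets is disjoint from the closure of their union. Hence $\Sigma^1(G)$ lies in the complement of $\rS$ of the closure of the union.

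\emph{Inclusion $\supseteq$.} Let $\phi\in\rS(G)\setminus\Sigma^1(G)$. By density it suffices to treat rational $\phi$, and the main work is there.

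First take $\phi$ rational, rescaled to a primitive class $\phi\colon G\to\Z$, and non-fibered. By \cite[Thm.~E]{BNS} the ray through $\phi$ is not in $\Sigma^1$, so by Stallings' theorem $\phi$ is not fibered. By Friedl--Vidussi \cite{FV13} there is a finite-image representation $\sigma$, which we may take to be integral by using the regular representation of a finite quotient, such that $\Delta^{\phi,\sigma}(M)=0$.

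We then show this forces $\im(\phi^*)\cong\C^\times\subseteq\VV^1(M,L_\sigma\otimes\C)$. Vanishing of the one-variable twisted Alexander polynomial means $H_1(M;\C[t^{\pm1}]^r)$, with $G$ acting by $\sigma\otimes t^{\phi}$, is not $\C[t^{\pm1}]$-torsion, so it has positive rank. By the universal coefficient theorem over the PID $\C[t^{\pm1}]$, specializing $t=z$ then gives $H_1(M;L_\sigma\otimes L_{\rho_z})\ne0$ for every $z\in\C^\times$, where $\rho_z=z^{\phi}$. The tropicalization of the one-parameter subtorus $\{z^\phi\}$ is the line $\R\phi$, and tropicalization is monotone under inclusion. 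Hence $\R\phi\subseteq\Trop_\C(\VV^1(M,L_\sigma\otimes\C))$.

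Now take a general $\phi\notin\Sigma^1(G)$. The complement of $\Sigma^1(G)$ is a union of closed rational polyhedral cones: the non-fibered cones of the Thurston norm together with the boundaries of the fibered faces. Rational points are dense in each such cone, so $\phi$ is a limit of rational non-fibered classes. By the previous step these classes lie in the union, so $\phi$ lies in its closure. This is exactly why the closure appears in the statement.

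The main obstacle is the Friedl--Vidussi input and its precise form. It must apply to every non-fibered class, including classes on boundaries of fibered faces, not only to those in the interior of non-fibered cones. It must also produce a representation that can be taken over $\Z$ with $\Delta^{\phi,\sigma}=0$ exactly, rather than merely non-monic or of wrong degree. If \cite{FV13} only gives degree or monicity failure, the first fallback is its vanishing formulation: for a non-fibered $\phi$ there is an epimorphism onto a finite group for which the twisted Alexander polynomial vanishes, and one checks this covers boundary classes. Failing that, the density argument above handles the boundary classes anyway. Degenerate cases, such as $b_1=0$ (excluded by the standing assumption) and $M$ reducible, would also need checking; for $M$ reducible with non-sphere summands, $\Sigma^1=\emptyset$ and $\Delta^\sigma$ vanishes.
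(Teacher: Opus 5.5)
Your proof is correct and follows essentially the same route as the paper. For $\subseteq$ you use the twisted tropical bound together with closedness of $\Sigma^1(G)^{\compl}$, and you also make explicit the sign change $\Sigma^1(G)=\Sigma^1(G;\Z)$ via antipodal symmetry, which the paper only notes in passing. For $\supseteq$ you use density of primitive non-fibered classes, the Friedl--Vidussi vanishing theorem, and the passage from $\Delta^{\phi,\sigma}(M)=0$ to $\im(\phi^*)\subseteq\VV^1(M,L_\sigma\otimes\C)$; you derive that last step from the universal coefficient theorem over $\C[t^{\pm1}]$, where the paper cites Proposition~\ref{prop:jump-loci-and-Alexander}\eqref{ja2}.

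One caution about your hedging paragraph: Friedl--Vidussi applies to every non-fibered class, so no fallback is needed. The fallback you propose would also fail. A rational ray through a vertex of the Thurston norm ball at which only fibered faces meet is non-fibered, yet it is not a limit of classes from non-fibered top-dimensional faces, so density cannot replace the theorem for such classes.
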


Theorem~\ref{thm:sigma-3mfd-intro} says that the inclusion of
Theorem~\ref{thm:bns-twisted-trop-intro} is an equality for
$3$-manifold groups, and so, in view of \cite[Thm.~E]{BNS}, that the
twisted tropical varieties of $M$ determine the fibered faces of the
Thurston norm ball.  The sharpness fails for the untwisted bound, since
untwisted Alexander polynomials do not detect fiberedness, whereas by
\cite{FV13} the twisted ones do.  In this sense the $3$-manifold case
shows that Theorem~\ref{thm:bns-twisted-trop-intro} is the right bound,
and not merely a better one.

\subsection{Further directions}
\label{intro:further}

The $3$-manifold result naturally raises the question of which
other spaces admit a sharp twisted tropical description of
their BNS invariants.

\begin{question}
\label{que:sigma}
For which spaces $X$ and degrees $q\geq 1$ do we have
\begin{equation}
\label{eq:sigma-bigcup}
\Sigma^q(X;\Z) = \rS\bigg(\overline{\bigcup_{\sigma}
\Trop_\Z\big(\cJ^{\le q}(X, L_{\sigma})\big)}\bigg)^{\compl},
\end{equation}
where $\sigma$ runs over all representations
$\sigma\colon G \to \GL_r(\Z)$?
In particular, does \eqref{eq:sigma-bigcup} hold when $q=1$?
\end{question}

The inclusion $\supseteq$ holds in general by
Corollary~\ref{cor:twist-bound-intro}, and
Theorem~\ref{thm:sigma-3mfd-intro} gives an affirmative
answer for $3$-manifold groups with $q=1$.  For right-angled
Artin groups and nilmanifolds, equality already holds with the
trivial representation alone \cite{PS-plms}, so no twisting is
needed in those cases.  Whether infinite-image integral
representations can yield sharper bounds than finite-image ones
is itself open; see Remark~\ref{rem:integral-vs-finite}.

\subsection{Organization}
\label{intro:organization}
This paper is organized as follows. In Section~\ref{sect:twist-cjl},
we introduce the twisted homology jump loci and twisted Alexander
polynomials and establish their basic properties.
In Section~\ref{sect:tropical}, we recall the necessary background
on tropical geometry.
After a brief review of the BNSR invariants in Section~\ref{sec:bnsr},
we prove Theorem~\ref{thm:bns-twisted-trop-intro}
in Section~\ref{sect:twist-bound}.
Sections~\ref{sect:Kahler} and~\ref{sect:3-manifold} are devoted
to the two applications described above:
Corollary~\ref{cor:bns-kahler-intro} and Theorem~\ref{thm:intro-kahler}
are both proved in Section~\ref{sect:Kahler}, and
Theorem~\ref{thm:sigma-3mfd-intro} is proved in
Section~\ref{sect:3-manifold}.

\section{Twisted homology jump loci}
\label{sect:twist-cjl}

This section introduces the main invariants studied in the paper.
We define the twisted homology jump loci in \S\ref{subsec:twist-cjl}, 
then develop the associated twisted Alexander polynomials in 
\S\ref{subsec:twist-alex}, including the Crowell exact sequence 
relating two natural module presentations via Fox calculus.
The section closes in \S\ref{subsec:cjl-covers} with a treatment 
of finite covers, which are a primary source of finite-rank 
local systems in our applications.

\subsection{Twisted homology jump loci}
\label{subsec:twist-cjl}
Let $X$ be a connected, finite-type CW-complex, with fundamental 
group $\pi_1(X)=G$. Fix a field $\k$, and let $\Hom(G,\GL_r(\k))$ be 
the $\k$-algebraic variety of rank~$r$ representations of $G$ over $\k$.  
Each representation $\sigma\colon G\to \GL_r(\k)$ corresponds to a rank $r$ 
local system $L_{\sigma}$ on $X$. The degree~$i$, rank~$r$ homology jump 
loci of $X$ (over $\k$) are the subvarieties 
\begin{equation}
\label{eq:cv-sigma}
\VV^i(X,\k)[r]=\big\{\sigma \in \Hom(G,\GL_r(\k)) \mid  
H_i(X; L_{\sigma}) \ne 0\big\}\, .
\end{equation}
In particular, the rank-one jump loci $\VV^i(X,\k)\coloneqq\VV^i(X,\k)[1]$ 
are subvarieties of $\Hom(G,\k^{\times})$, the commutative, 
$\k$-algebraic group of $\k$-valued, multiplicative characters of $G$.
If $\k\subset \K$ is a field extension, then $\VV^i(X,\k)[r]$ are the 
$\k$-points on $\VV^i(X,\K)[r]$, while $\VV^i(X,\K)[r]$ is obtained from 
$\VV^i(X,\k)[r]$ by extension of the base field. 

Now fix a local system $L_\sigma$ on $X$ over $\k$ of rank $r\geq 1$. 
Observe that if $\rho\colon G\to \k^{\times}$ is a character, then the 
local system $L_{\sigma}\otimes_{\k} L_{\rho}$ corresponds to 
the representation $\sigma\otimes \rho\colon G\to \GL_r(\k)$ that scales each 
matrix $\sigma(g)$ by the scalar $\rho(g)$. This defines an action of the 
character group $\Hom(G,\k^{\times})$ on the representation variety 
$\Hom(G,\GL_r(\k))$. We are interested in the orbit of $L_\sigma$ 
for this action, in particular, its intersection with the subvariety 
$\VV^i(X,\k)[r]$ defined in \eqref{eq:cv-sigma}. The {\em twisted 
homology jump loci}\/ of $X$ with coefficients in 
a local system $L_\sigma$ (in degree $i$) are the jump loci for 
homology with coefficients in rank-one local systems, twisted by $L_\sigma$, 
\begin{equation}
\label{eq:cvx}
\VV^i(X,L_\sigma)=\big\{\rho \in \Hom(G,\k^{\times}) \mid  
H_i(X; L_\sigma\otimes_{\k} L_{\rho}) \ne 0\big\}\, .
\end{equation}
We will denote by $\VV^{\le q}(X,L_\sigma)=\bigcup_{i\le q} \VV^i(X,L_\sigma)$ 
the union of the jump loci in degrees up to $q$. 

Note that when $L_\sigma$ is the constant $\k$-sheaf, 
$\VV^i(X,L_\sigma)=\VV^i(X,\k)$. 
Moreover, if $L_\sigma$ is a rank-one local system, then  
$\sigma$ is a character in $\Hom(G,\k^{\times})$. Hence 
$L_\sigma\otimes_{\k} L_{\rho}=L_{\sigma\cdot \rho}$, and so   
\begin{equation}
\label{eq:rho-vv}
\VV^i(X,L_{\sigma\cdot \rho})=\rho^{-1} \cdot \VV^i(X,L_\sigma) .
\end{equation}

\begin{proposition}
\label{prop:product-formula}
 Let $X_1$ and $X_2$ be two connected, finite-type CW-complexes. 
 Consider two local systems $L_{\sigma_1}$ and $L_{\sigma_2}$ on $X_1$ 
 and $X_2$ both over a field $\k$, respectively.  Then we have
\[
\VV^i(X_1 \times X_2, L_{\sigma_1} \boxtimes L_{\sigma_2})= 
\bigcup_{s+t=i} \VV^s(X_1, L_{\sigma_1}) \times \VV^t(X_2, L_{\sigma_2}) .
\]
Here $L_{\sigma_1} \boxtimes L_{\sigma_2} = p_1^{-1}L_{\sigma_1} 
\otimes p_2^{-1}L_{\sigma_2}$ with $p_1$ and $p_2$ being the projection 
maps from $X_1\times X_2$ to $X_1$ and $X_2$, respectively.  
\end{proposition}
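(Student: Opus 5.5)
The plan is to use the Künneth formula for homology with local coefficients over a field. First identify the character group of the product: $\pi_1(X_1\times X_2)=G_1\times G_2$, and since $\k^{\times}$ is abelian, restriction to the two factors gives an isomorphism $\Hom(G_1\times G_2,\k^{\times})\cong \Hom(G_1,\k^{\times})\times\Hom(G_2,\k^{\times})$. Under this isomorphism $\rho\mapsto(\rho_1,\rho_2)$ we have $L_\rho=L_{\rho_1}\boxtimes L_{\rho_2}$. Hence
\[
(L_{\sigma_1}\boxtimes L_{\sigma_2})\otimes_{\k} L_\rho \cong (L_{\sigma_1}\otimes_{\k} L_{\rho_1})\boxtimes(L_{\sigma_2}\otimes_{\k} L_{\rho_2}).
\]
Both sides correspond to the representation $(g_1,g_2)\mapsto \rho_1(g_1)\sigma_1(g_1)\otimes\rho_2(g_2)\sigma_2(g_2)$ on $\k^{r_1}\otimes\k^{r_2}$.

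Next, I would apply the Künneth theorem at the level of equivariant chain complexes. Take the product cell structure on $X_1\times X_2$. The cellular chain complex of the universal cover is $C_\bullet(\wX_1)\otimes_{\Z} C_\bullet(\wX_2)$, a complex of free $\Z[G_1\times G_2]=\Z G_1\otimes\Z G_2$-modules. For local systems $M_1$, $M_2$ (right modules over $\k G_1$, $\k G_2$) there is a natural isomorphism of chain complexes
\[
(C_\bullet(\wX_1)\otimes C_\bullet(\wX_2))\otimes_{\Z[G_1\times G_2]}(M_1\otimes_\k M_2)\cong (C_\bullet(\wX_1)\otimes_{\Z G_1}M_1)\otimes_\k(C_\bullet(\wX_2)\otimes_{\Z G_2}M_2).
\]
Since $\k$ is a field, the algebraic Künneth formula has no Tor term. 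This gives
\[
H_i(X_1\times X_2;M_1\boxtimes M_2)\cong\bigoplus_{s+t=i}H_s(X_1;M_1)\otimes_\k H_t(X_2;M_2).
\]
Finite type guarantees that each chain group is finitely generated, although over a field this is not even needed.

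Finally, a tensor product of $\k$-vector spaces is nonzero if and only if both factors are nonzero. So $H_i$ of the product is nonzero exactly when there exist $s+t=i$ with $\rho_1\in\VV^s(X_1,L_{\sigma_1})$ and $\rho_2\in\VV^t(X_2,L_{\sigma_2})$. This is the claimed equality of sets, read through the identification of character groups. The main (mild) obstacle is bookkeeping: one has to check carefully that the chain-level isomorphism above is compatible with the module structures, i.e. that tensoring over $\Z G_1\otimes\Z G_2$ splits as stated. I would verify this directly on free generators $\Z G_1\otimes\Z G_2$, where both sides reduce to $M_1\otimes_\k M_2$.
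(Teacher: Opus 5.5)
Your proposal is correct and follows essentially the same route as the paper. The paper likewise writes $\rho=(\rho_1,\rho_2)$ and applies the K\"unneth formula over the field $\k$ (no $\Tor$ term) to get $H_i(X_1\times X_2;(L_{\sigma_1}\boxtimes L_{\sigma_2})\otimes L_\rho)\cong\bigoplus_{s+t=i}H_s(X_1;L_{\sigma_1}\otimes L_{\rho_1})\otimes_\k H_t(X_2;L_{\sigma_2}\otimes L_{\rho_2})$; you additionally spell out the chain-level identification and the splitting of the twisted local system, which the paper leaves implicit.
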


\begin{proof}
Given a character $\rho = (\rho_1,\rho_2) \in \Hom(\pi_1(X_1\times X_2),\k^\times)=
\Hom(\pi_1(X_1),\k^\times)\times \Hom(\pi_1( X_2),\k^\times) $, by the K\"unneth formula 
(the $\Tor$ term vanishes since $\k$ is a field), we have 
\[
H_i(X_1\times X_2; (L_{\sigma_1} \boxtimes L_{\sigma_2})\otimes_{\k} 
L_\rho) \cong \bigoplus_{s+t=i} H_s(X_1; L_{\sigma_1} \otimes L_{\rho_1})\otimes_\k  
H_t(X_2; L_{\sigma_2} \otimes L_{\rho_2}).
\]
The claim follows.
\end{proof}

\subsection{Equations for twisted homology jump loci} 
\label{subsec:twist-det}
Here is a more concrete description of the (twisted) homology jump loci. 
First let $\widetilde{X}\to X$ be the universal cover of $X$. Upon lifting 
the cell structure of $X$ to this cover, we obtain a chain complex of free 
$\Z[G]$-modules, $(C_{*}(\widetilde{X};\Z), \partial)$.

Let $R$ be $\Z$ or a field $\k$. Fix a representation $\sigma\colon G \to \GL_r(R)$. 
Consider the chain complex 
\begin{equation}
\label{eq:twist-cc}
C_*^\sigma(X ; R) \coloneqq (C_{*}(\widetilde{X};\Z) \otimes_{\Z[G]} 
(R[G_{\ab}] \otimes R^r), \partial^{\sigma})
\end{equation}
where $\ab\colon G\surj G_{\ab}$ is the abelianization of $G$ and 
$G$ acts on $R[G_{\ab}] \otimes R^r$ via 
\[
g \cdot (u\otimes v) = (\ab(g)\cdot u)\otimes (\sigma(g) \cdot v)
\]
and $\partial^{\sigma} =\partial \otimes \id$. 
The chain complex \eqref{eq:twist-cc} is a complex of finitely 
generated free $R[G_{\ab}]$-modules. 
The {\em $i$-th twisted homology jump ideal}\/ $\cJ_i(X,L_\sigma)$ is the ideal
generated by minors of size $r\cdot c_i$ of the matrices 
$\partial^\sigma_{i+1} \oplus \partial^\sigma_{i}$, where $c_i$ 
is the number of $i$-cells of $X$. When $R=\k$ is an algebraically 
closed field, the varieties $\VV^i(X,L_\sigma)$, then, are the 
zero-sets of the $i$-th twisted homology jump ideal $\cJ_i(X,L_\sigma)$.
When $i=1$, the set $\VV^1(X,L_{\sigma})$ only depends on 
$G=\pi_1(X)$ and $\sigma$, so it can also be denoted by $\VV^1(G,L_{\sigma})$.
Moreover, $\VV^1(G,L_{\sigma})$ can be computed by means of the Fox calculus.

\subsection{Twisted Alexander polynomials}
\label{subsec:twist-alex}

Let $\sigma\colon \pi_1(X)\to \GL_r(R)$ be a representation, where $R$ is 
either $\Z$ or a field $\k$. Fix a surjective homomorphism 
$\phi\colon \pi_1(X) \twoheadrightarrow A$, where $A$ is a 
finitely generated free abelian group. 

Consider the chain complex
\begin{equation}
\label{eq:twist-cc2}
C_*^{\phi,\sigma}(X ;R) \coloneqq 
C_{*}(\widetilde{X};\Z) \otimes_{\Z[G]} (R[A] \otimes_R R^r),
\end{equation}
where $G=\pi_1(X)$ acts on $R[A] \otimes R^r$ via 
\[
g \cdot (u\otimes v) = (\phi(g)\cdot u)\otimes (\sigma(g) \cdot v).
\]
This is a chain complex of finitely generated free $R[A]$-modules. 
For $i\geq 0$, the \emph{$i$-th twisted Alexander invariant} of 
$(X,\phi,\sigma)$ is
\begin{equation}
\label{eq:twist-alex-mod}
H_i^{\phi,\sigma}(X;R[A])\coloneqq H_i\bigl(C_*^{\phi,\sigma}(X ;R)\bigr),
\end{equation}
a finitely generated $R[A]$-module. Since $R[A]$ is a Noetherian UFD, 
each such module admits a finite presentation
\[
\begin{tikzcd}[column sep=18pt]
R[A]^m \ar[r, "M"] &[4pt] R[A]^n \ar[r] &H_i^{\phi,\sigma}(X;R[A]) \ar[r] & 0.
\end{tikzcd}
\]
The \emph{$k$-th Fitting ideal} of a finitely presented $R[A]$-module 
with presentation matrix $M\colon R[A]^m\to R[A]^n$ is the ideal
$\Fitt_k(M)$ generated by the $(n-k)\times(n-k)$ minors of $M$,
with the convention $\Fitt_k(M)=R[A]$ if $k\geq n$ and 
$\Fitt_k(M)=0$ if $n-k>m$.

\begin{definition}
\label{def:twist-alex-poly}
The \emph{$i$-th (twisted) Alexander polynomial} $\Delta_i^{\phi,\sigma}(X)\in R[A]$ 
is the gcd of the elements of $\Fitt_0\!\bigl(H_i^{\phi,\sigma}(X;R[A])\bigr)$;
it is well-defined up to units in $R[A]$.
For $i=1$, we write $\Delta^{\phi,\sigma}(X)$, and if $\phi$ is the projection 
onto the maximal free abelian quotient of $\pi_1(X)$, we denote it by 
$\Delta^\sigma(X)$.
\end{definition}

Now assume $R=\k$ is an algebraically closed field. The epimorphism $\phi$ 
induces an embedding
\[
\phi^* \colon \Hom(A,\k^{\times}) \longinj \Hom(G,\k^{\times}).
\]
If $\rank A>1$, the irreducible factors of $\Delta^{\phi,\sigma}(X)$ correspond 
bijectively to the codimension-one irreducible components of
\[
\VV^1(X,L_\sigma) \cap \im \phi^*
\subseteq \im \phi^* \cong (\k^\times)^{\rank A}.
\]
In the untwisted case, this correspondence was established in \cite[Cor.~3.2]{DPS-imrn}. 
The following proposition extends this relationship to the twisted setting.

\begin{proposition} 
\label{prop:jump-loci-and-Alexander}
Let $\sigma\colon \pi_1(X)\to \GL_r(\k)$ be a representation over an 
algebraically closed field $\k$.
\begin{enumerate}
\item \label{ja1}
If $\rank A>1$, the zero locus of $\Delta^{\phi,\sigma}(X)$ coincides with the 
union of all codimension-one irreducible components of 
$\VV^1(X,L_\sigma) \cap \im \phi^*$. Moreover,
\[
\Delta^{\phi,\sigma}(X)=0 \;\Longleftrightarrow\; 
\im \phi^* \subseteq \VV^1(X,L_\sigma).
\]

\item \label{ja2}
If $\rank A=1$, the module $H_1^{\phi,\sigma}(X;\k[A])$ is not torsion 
(equivalently, $\Delta^{\phi,\sigma}(X)=0$) if and only if 
\[
\im \phi^* \subseteq \VV^1(X,L_\sigma).
\]
Moreover, writing $\k[A]\cong \k[t^{\pm1}]$, the free rank of 
$H_1^{\phi,\sigma}(X;\k[A])$ equals $m$ if and only if 
\[
\dim_\k H_1\bigl(X, L_\sigma \otimes_{\k} \phi^* L_\lambda\bigr)=m
\]
for generic $\lambda\in \k^\times$.
\end{enumerate}
\end{proposition}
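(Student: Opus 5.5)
Following \cite[Cor.~3.2]{DPS-imrn}, the plan is to compare the $\k[A]$-module $H_1^{\phi,\sigma}(X;\k[A])$ with its specializations at points of $\im\phi^*$. Reduce to $X$ having finite $2$-skeleton, since only $H_1$ matters. Set $R=\k[A]$. The complex $C_*^{\phi,\sigma}(X;\k)$ in degrees $\le 2$ has the form
\[
R^{rc_2}\xrightarrow{\partial_2^{\phi,\sigma}} R^{rc_1}\xrightarrow{\partial_1^{\phi,\sigma}} R^{rc_0}.
\]
For $\lambda\in\Hom(A,\k^\times)$, tensoring with $\k_\lambda$ over $R$ gives the chain complex computing $H_*(X;L_\sigma\otimes\phi^*L_\lambda)$. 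So $\phi^*\lambda\in\VV^1(X,L_\sigma)$ exactly when the evaluated complex has nonzero $H_1$.

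First I compute generic ranks. Let $K=\operatorname{Frac}(R)$ and $\beta=\dim_K H_1(C\otimes_R K)$, which is the free rank of $H_1^{\phi,\sigma}$. By semicontinuity of the ranks of $\partial_1(\lambda)$ and $\partial_2(\lambda)$, there is a dense open set of $\lambda$ on which $\dim_\k H_1(X;L_\sigma\otimes\phi^*L_\lambda)=\beta$: on such a set both evaluated ranks equal their generic values over $K$. This proves the free-rank statement in (2), and it works for any rank of $A$. If $\beta>0$, every point of the torus lies in the jump locus, because $\VV^1$ is Zariski closed and contains a dense open set. If $\beta=0$, the generic point is not in $\VV^1$. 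Hence $\im\phi^*\subseteq\VV^1(X,L_\sigma)$ if and only if $\beta>0$, that is, if and only if $H_1^{\phi,\sigma}$ is not torsion. The usual fact that $\Fitt_0$ of a module over a domain is nonzero if and only if the module is torsion then gives $\Delta^{\phi,\sigma}=0$. This proves the equivalences in (1) and (2).

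For the codimension-one statement in (1), assume $\beta=0$. The Crowell-type sequence, in which $H_1^{\phi,\sigma}$ sits inside $\coker\partial_2^{\phi,\sigma}$ with quotient $\im\partial_1^{\phi,\sigma}\subseteq R^{rc_0}$, shows that $\operatorname{supp}H_1^{\phi,\sigma}$ is closely tied to $\VV^1\cap\im\phi^*$. I would argue locally at a height-one prime $\mathfrak p=(f)$ of $R$, where $R_{\mathfrak p}$ is a DVR. Over $R_{\mathfrak p}$ the complex splits as a sum of free and torsion parts. The universal coefficient theorem then shows that $\operatorname{Tor}$ and $H_1$ terms survive at $\k$-points of $V(f)$ outside a closed subset of smaller dimension exactly when $(H_1)_{\mathfrak p}\neq0$ or $(H_0)_{\mathfrak p}$ has torsion. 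Since $\Delta^{\phi,\sigma}=\prod_{\mathfrak p}f_{\mathfrak p}^{\operatorname{length}(H_1)_{\mathfrak p}}$ up to units, $V(\Delta)$ is the union of those hypersurfaces $V(f)$ with $(H_1)_{\mathfrak p}\ne0$.

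The main obstacle is the $H_0$ contribution: a prime could appear in $\VV^1$ only through torsion of $H_0^{\phi,\sigma}$ in the universal coefficient sequence. I would first try to show that $H_0^{\phi,\sigma}(X;R)=\coker\partial_1^{\phi,\sigma}$ has support of codimension $\ge2$ when $\rank A>1$. In the untwisted case this is the standard fact that $H_0=\k$ is supported at the trivial character. Here I would use that $\partial_1^{\phi,\sigma}$ has entries $\phi(g)\sigma(g)-I$ for generators $g$, and that two generators with independent $\phi$-images make the $r\times r$ determinants $\det(t_i\sigma(g_i)-I)$ coprime. This uses $\rank A>1$, and is why the rank-one case is stated differently. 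If that coprimality fails, I would instead restrict the statement to components not coming from $H_0$, by comparing $\Fitt_0(H_1)$ with the appropriate gcd of maximal minors of $\partial_2^{\phi,\sigma}$ via the Crowell sequence.
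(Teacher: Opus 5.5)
Your argument is correct, and it is more self-contained than the paper's. The paper proves only the free-rank assertion of (2) directly and delegates everything else to \cite[Cor.~3.2]{DPS-imrn}.

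\textbf{Free rank.} The paper puts the complex over the PID $\k[t^{\pm1}]$ into Smith normal form and counts the elementary summands that survive specialization at $\lambda$. Your generic-rank argument over $\operatorname{Frac}(\k[A])$ replaces this. It works for every rank of $A$, and it gives both ``$\Delta^{\phi,\sigma}=0$'' equivalences at once. The Smith normal form does give slightly finer information in rank one: it says exactly which $\lambda$ jump.

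\textbf{Codimension-one claim.} The paper transfers \cite[Cor.~3.2]{DPS-imrn} by identifying $\VV^1(X,L_\sigma)\cap\im\phi^*$ with $\operatorname{supp}H_1^{\phi,\sigma}$. In the twisted setting this is only true outside $\operatorname{supp}H_0^{\phi,\sigma}$; untwisted, that set is just $\{\bo\}$. Your localization at height-one primes, spread out to $V(f)\setminus V(s)$, isolates exactly this twisted input: $H_0^{\phi,\sigma}$ must contribute no height-one primes.

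That input does hold, so your fallback is unnecessary. After passing to a complex with a single $0$-cell, the irreducible factors of $\det(t^{a_i}\sigma(g_i)-I)$ are, up to units, of the form $c\,t^{a_i'}-1$ with $a_i'$ primitive. Two generators with independent $a_1,a_2$ therefore give determinants with no common factor. More directly, $H_0(X;L_\sigma\otimes\phi^*L_\lambda)\neq 0$ if and only if some nonzero linear form $w$ satisfies $w\circ\sigma(g)=\lambda(\phi(g))^{-1}w$ for all $g$. This happens for at most $r$ values of $\lambda$, so $\operatorname{supp}H_0^{\phi,\sigma}$ is finite. This is the precise form of the ``finitely many characters'' in Remark~\ref{rem:fox-vs-invariant}.

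\textbf{The case $\beta>0$.} Your restriction to $\beta=0$ is the right reading of the first sentence of (1). When $\Delta^{\phi,\sigma}=0$, its zero locus is the whole torus. But then $\VV^1(X,L_\sigma)\cap\im\phi^*=\im\phi^*$ has no codimension-one components, so the literal statement fails in that case.
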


\begin{proof}
The arguments from \cite[Cor.~3.2]{DPS-imrn} carry over to the twisted 
setting, yielding all claims except the last assertion in \eqref{ja2}, namely, 
the formula relating free rank to generic fiber dimension.
(Although \cite{DPS-imrn} uses the Fox-calculus formula for the Alexander 
polynomial, the proof there relies only on the identification of the 
characteristic variety with the support of $H_1^{\phi,\sigma}(X;\k[A])$ 
and the fact that the zero locus of $\Delta^{\phi,\sigma}(X)$ 
equals the union of codimension one components of the support of $H_1^{\phi,\sigma}(X;\k[A])$,
which holds equally for Definition~\ref{def:twist-alex-poly}.)

It remains to prove that the free rank of $H_1^{\phi,\sigma}(X;\k[A])$ 
equals $m$ if and only if $\dim_\k H_1(X; L_\sigma\otimes_\k\phi^*L_\lambda)=m$ 
for generic $\lambda\in\k^\times$.
When $\rank A=1$, the ring $\k[A]\cong \k[t^{\pm1}]$ is a PID, 
so the chain complex $C_*^{\phi,\sigma}(X;\k)$ decomposes up to 
chain homotopy as a direct sum of complexes of the form
\[
\begin{tikzcd}[column sep=18pt]
0 \ar[r]& \k[t^{\pm1}] \ar[r, "f"] &[2pt] \k[t^{\pm1}] \ar[r]& 0
\end{tikzcd}
\quad\text{or}\quad
\begin{tikzcd}[column sep=18pt]
0 \ar[r]& \k[t^{\pm1}] \ar[r]& 0
\end{tikzcd}
\]
with $f\in \k[t^{\pm1}]$.  The free rank of $H_1^{\phi,\sigma}$ equals
the number $m$ of summands of the second type.  Specializing at
$\lambda\in\k^\times$ (i.e., applying $-\otimes_{\k[t^{\pm1}],t\mapsto\lambda}\k$),
each second-type summand contributes $1$ to 
$\dim_\k H_1(X;L_\sigma\otimes_\k\phi^*L_\lambda)$, while each first-type 
summand contributes $1$ only if $f(\lambda)=0$, and $0$ otherwise.
Since there are finitely many first-type summands, $f(\lambda)\ne 0$ 
for all but finitely many $\lambda$, so for generic $\lambda$ the 
dimension equals $m$, as claimed.
\end{proof}

We now describe a twisted analogue of the classical Crowell exact 
sequence from Fox calculus.

\begin{lemma}
\label{lem:twisted-crowell}
For a finitely presented group $G=\langle x_1,\dots,x_n\mid r_1,\dots,r_m\rangle$ 
and a representation $\sigma\colon G\to\GL_r(R)$, the boundary map 
$\partial_2^\sigma$ of the twisted chain complex fits into a short exact sequence
\begin{equation}
\label{eq:crowell}
\begin{tikzcd}[column sep=18pt]
0 \ar[r] & H_1^{\phi,\sigma}(X;R[A]) \ar[r] &A_\phi^\sigma 
\ar[r] & \im(\partial_1^\sigma) \ar[r] & 0.
\end{tikzcd}
\end{equation}
where $A_\phi^\sigma \coloneqq \coker(\partial_2^\sigma)$ is the 
\emph{twisted Alexander module}.
\end{lemma}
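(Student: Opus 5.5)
The plan is to read the sequence off the tail of the twisted chain complex of the presentation $2$-complex. Let $X$ be the presentation $2$-complex of $G=\langle x_1,\dots,x_n\mid r_1,\dots,r_m\rangle$, with one $0$-cell, $n$ one-cells and $m$ two-cells. By \eqref{eq:twist-cc2}, the complex $C_*^{\phi,\sigma}(X;R)$ is
\[
\begin{tikzcd}[column sep=22pt]
R[A]^{rm} \ar[r, "\partial_2^\sigma"] & R[A]^{rn} \ar[r, "\partial_1^\sigma"] & R[A]^{r} \ar[r] & 0 .
\end{tikzcd}
\]
Here $\partial_2^\sigma$ is the $r\times r$ block matrix obtained from the Fox Jacobian $\bigl(\partial r_i/\partial x_j\bigr)$ by applying the ring map $\Z[G]\to M_r(R[A])$, $g\mapsto \phi(g)\sigma(g)$. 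Likewise $\partial_1^\sigma$ is the column of blocks $\phi(x_j)\sigma(x_j)-I_r$. Note that $\partial_1^\sigma\circ\partial_2^\sigma=0$ holds because $\partial_1\partial_2=0$ already in $C_*(\wX;\Z)$, which is the fundamental formula of Fox calculus. If the paper's $X$ is an arbitrary CW-complex with $\pi_1(X)=G$, I will first note that $H_1^{\phi,\sigma}$ and $\coker\partial_2^\sigma$ depend only on the $2$-skeleton up to this presentation. It then suffices to work with the presentation complex, since the $3$-cells do not affect $H_1$.

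The argument is then purely algebraic. Consider the surjection $R[A]^{rn}\surj A^\sigma_\phi=R[A]^{rn}/\im\partial_2^\sigma$. Since $\im\partial_2^\sigma\subseteq\ker\partial_1^\sigma$, the map $\partial_1^\sigma$ factors through a map $\bar\partial_1\colon A_\phi^\sigma\to R[A]^r$.

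1. The image of $\bar\partial_1$ is exactly $\im(\partial_1^\sigma)$. This gives surjectivity of $A_\phi^\sigma\to\im(\partial_1^\sigma)$.
2. The kernel of $\bar\partial_1$ is $\ker\partial_1^\sigma/\im\partial_2^\sigma$, which by definition \eqref{eq:twist-alex-mod} equals $H_1^{\phi,\sigma}(X;R[A])$.
3. The inclusion $\ker\partial_1^\sigma/\im\partial_2^\sigma\inj R[A]^{rn}/\im\partial_2^\sigma$ is injective.

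Together these give the short exact sequence \eqref{eq:crowell}. Equivalently, one can apply the snake lemma to the map of the two-term complexes $\im\partial_2^\sigma\to\ker\partial_1^\sigma\to R[A]^{rn}$.

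The main point needing care is not the exactness, which is formal, but the identification of $\partial_2^\sigma$ and $\partial_1^\sigma$ with the twisted Fox matrices. In particular, the left/right module conventions must be checked. The tensor product $C_*(\wX;\Z)\otimes_{\Z[G]}(R[A]\otimes R^r)$ requires $C_*(\wX)$ as a right $\Z[G]$-module, via the involution $g\mapsto g^{-1}$ if necessary. With that convention, the matrix of $\partial_2^\sigma$ is the image of the Fox Jacobian, possibly transposed and with $\sigma$ applied to $g$ or $g^{-1}$ accordingly. I would fix these conventions once, check $\partial_1^\sigma\partial_2^\sigma=0$ directly from $\sum_j (\partial r_i/\partial x_j)(x_j-1)=r_i-1=0$ in $\Z[G]$, and then the lemma follows.
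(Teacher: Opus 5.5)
Your proposal is correct and matches the paper's proof. Both $H_1^{\phi,\sigma}(X;R[A])$ and $A_\phi^\sigma$ are quotients by $\im(\partial_2^\sigma)$, so the inclusion $\ker(\partial_1^\sigma)/\im(\partial_2^\sigma)\inj C_1^{\phi,\sigma}/\im(\partial_2^\sigma)$ has cokernel $C_1^{\phi,\sigma}/\ker(\partial_1^\sigma)\cong\im(\partial_1^\sigma)$. The Fox-calculus identifications and left/right convention checks you flag are not needed for the lemma, since exactness uses only $\partial_1^\sigma\circ\partial_2^\sigma=0$, which holds automatically because $C_*^{\phi,\sigma}$ is obtained from a chain complex by tensoring.
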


\begin{proof}
Both $H_1^{\phi,\sigma}(X;R[A])=\ker(\partial_1^\sigma)/\im(\partial_2^\sigma)$ 
and $A_\phi^\sigma=C_1^{\phi,\sigma}/\im(\partial_2^\sigma)$ are quotients 
of $C_1^{\phi,\sigma}$ by $\im(\partial_2^\sigma)$. The natural inclusion 
$H_1^{\phi,\sigma}\hookrightarrow A_\phi^\sigma$ is therefore well-defined, 
with cokernel $C_1^{\phi,\sigma}/\ker(\partial_1^\sigma)\cong\im(\partial_1^\sigma)$.
\end{proof}

\begin{remark}
\label{rem:fox-vs-invariant}
By the analogue of \cite[Prop.~2.2]{DPS-imrn} for 
Lemma~\ref{lem:twisted-crowell},
\[
V\!\left(\Fitt_0\bigl(H_1^{\phi,\sigma}(X;R[A])\bigr)\right) 
= V\bigl(\Fitt_r(A_\phi^\sigma)\bigr)
\]
outside finitely many characters in $\Hom(A,\k^\times)$, namely, 
the support of $\im(\partial_1^\sigma)$, which in the 
untwisted case reduces to $\{\bo\}$. 
The minors of size $(n-1)r$ in the presentation matrix of  $A_\phi^\sigma$   
generate $\Fitt_r(A_\phi^\sigma)$. 
Hence the polynomial $\Delta^\sigma(X)$ and the gcd of those minors 
define the same hypersurface outside finitely many characters; 
this is why $\partial_2^\sigma$ can be used to compute 
$\VV^1(X,L_\sigma)$ via 
Proposition~\ref{prop:jump-loci-and-Alexander}.
\end{remark}

\subsection{Finite covers and twisted jump loci}
\label{subsec:cjl-covers}

Finite-index subgroups $N\leq G$ yield finite-rank local systems
defined over $\Z$: the action of $G$ on the left cosets of $N$
gives a representation $G\to S_r$, where $r=[G\colon N]$.

The most geometric instance is a connected finite cover
$\pi\colon Y\to X$, with $G=\pi_1(X)$, $N=\pi_1(Y)$, and $r=[G\colon N]$.
The pushforward $L_\pi\coloneqq\pi_* R_Y$ is a rank-$r$ $R$-local system
on $X$ induced by this coset action. When $N\trianglelefteq G$---equivalently, 
when $Y$ is a regular cover---$L_\pi$ is associated to the regular 
representation of $G/N$.

The next proposition records a formula for computing the homology 
groups $H_i(Y;\k)$ from the twisted homology jump loci.

\begin{proposition} 
\label{prop:finite-cover}
Let $\pi\colon Y\to X$ be a connected finite regular cover of a connected 
finite CW-complex $X$. Set $\pi_1(Y)=N$ and $\pi_1(X)=G$.
Assume that $\ch(\k) \nmid r=\vert G/N \vert$ and $\k$ is algebraically closed. 
Let $L_1,\dots, L_s$ be the irreducible $\k$-representations of the 
finite group $G/N$, with $L_1$ the trivial representation and $s$ the 
number of conjugacy classes of $G/N$.
\begin{enumerate}[itemsep=3pt]
\item \label{fc-homology}
For all $i\ge 0$, there is a decomposition
\begin{equation}
\label{eq:decomp}
\begin{aligned}
H_i(Y;\k) & \cong \bigoplus_{1\leq j \leq s}  
H_i(X;L_j)^{\oplus \rank L_j} \\
& \cong H_i(X;\k) \oplus \bigoplus_{2\leq j \leq s}
H_i(X;L_j)^{\oplus \rank L_j}.
\end{aligned}
\end{equation}

\item \label{fc-varieties}
The homomorphism $\pi^* \colon \Hom(\pi_1(X),\k^{\times})\to 
\Hom(\pi_1(Y),\k^{\times})$ satisfies
\begin{equation}
\label{eq:pullback-cv}
(\pi^*)^{-1}\big(\VV^i(Y,\k)\big) 
= \bigcup_{j=1}^{s} \VV^i(X,L_j)
\end{equation}
for all $i\ge 0$. That is, a character 
$\rho\in \Hom(\pi_1(X),\k^{\times})$ has the property that 
$\pi^*\rho \in \VV^i(Y,\k)$ if and only if 
$\rho \in \VV^i(X,L_j)$ for some $1\le j\le s$.
\end{enumerate}
\end{proposition}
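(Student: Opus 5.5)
The plan is to identify $H_i(Y;\k)$ with the homology of $X$ with coefficients in the pushforward local system, and then split that system into irreducibles. Shapiro's lemma, or equivalently the identification of cellular chains of the cover, gives
\[
C_*(Y;\k)\cong C_*(\wX;\Z)\otimes_{\Z[G]}\k[G/N],
\]
so that $H_i(Y;\k)\cong H_i(X;L_\pi)$. Here $L_\pi$ is the local system attached to the left regular representation $\k[G/N]$ of the finite group $G/N$, pulled back to $G$.

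For part \eqref{fc-homology}, we use the hypotheses that $\ch(\k)\nmid r$ and that $\k$ is algebraically closed. Maschke's theorem, together with the Artin--Wedderburn decomposition, then gives an isomorphism of $G/N$-representations
\[
\k[G/N]\cong\bigoplus_{j=1}^{s}L_j^{\oplus \rank L_j},
\]
where $s$ is the number of conjugacy classes. Since $C_*(\wX;\Z)\otimes_{\Z[G]}(-)$ commutes with finite direct sums, and homology commutes with direct sums, we obtain the first isomorphism in \eqref{eq:decomp}. The second isomorphism just singles out the trivial summand, which has $\rank L_1=1$ and $H_i(X;L_1)=H_i(X;\k)$.

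For part \eqref{fc-varieties}, we twist everything by a character $\rho\in\Hom(G,\k^\times)$. The local system $L_{\pi^*\rho}$ on $Y$ pushes forward to $L_\pi\otimes_\k L_\rho$ on $X$. This is the projection formula, and at the module level it is the isomorphism
\[
\k[G/N]\otimes\k_\rho\;\cong\;\k[G]\otimes_{\k[N]}\k_{\rho|_N},
\qquad gN\otimes 1\mapsto g\otimes\rho(g),
\]
which one checks is well defined and $G$-equivariant. Applying Shapiro's lemma again gives
\[
H_i(Y;L_{\pi^*\rho})\cong H_i(X;L_\pi\otimes L_\rho)\cong\bigoplus_j H_i(X;L_j\otimes L_\rho)^{\oplus\rank L_j}.
\]
The left-hand side is nonzero exactly when $\pi^*\rho\in\VV^i(Y,\k)$. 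The right-hand side is nonzero exactly when $\rho\in\VV^i(X,L_j)$ for some $j$, because every $\rank L_j\ge 1$. This proves \eqref{eq:pullback-cv}.

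The main point needing care is this twisted Shapiro/projection-formula identification, and in particular that the direct-sum decomposition is compatible with tensoring by $L_\rho$. That compatibility holds because $\rho$ is a scalar character, so tensoring with $\k_\rho$ is an exact additive functor that preserves the decomposition.
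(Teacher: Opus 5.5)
Your proposal is correct and follows the paper's route: you identify $H_i(Y;-)$ with homology of $X$ twisted by the pushforward $L_\pi$ (Shapiro/projection formula), split $\k[G/N]\cong\bigoplus_j L_j^{\oplus\rank L_j}$ via Maschke, and read off vanishing summand by summand, while also supplying the module-level check that the paper leaves implicit. One small slip in that check: $gN\otimes 1\mapsto g\otimes\rho(g)$ is not well defined, since replacing $g$ by $gn$ multiplies the image by $\rho(n)^2$; use $gN\otimes 1\mapsto g\otimes\rho(g)^{-1}$, or equivalently the inverse map $g\otimes c\mapsto gN\otimes\rho(g)c$, which is well defined and $G$-equivariant.
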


\begin{proof}
Since $\ch(\k) \nmid r=\vert G/N \vert$, by Maschke's theorem, 
$L_{\pi}$ is a semi-simple local system. 
Moreover, since $L_\pi$ corresponds to the regular representation 
of the finite group $G/N$, we have $L_\pi \cong 
\bigoplus_{1\leq j \leq s} L_j^{\oplus \rank L_j}$ by the 
decomposition theorem for regular representations of 
finite groups. Part~\eqref{fc-homology} follows.

For part~\eqref{fc-varieties}, let $L_\rho$ be any rank-one 
$\k$-local system on $X$. By the projection formula, 
\begin{equation}
\label{eq:proj-formula}
H_i(Y; \pi^*L_\rho) \cong H_i(X; L_\pi \otimes_\k L_\rho)
\cong \bigoplus_{1\leq j \leq s} 
H_i(X;L_j\otimes_\k L_\rho)^{\oplus \rank L_j}.
\end{equation}
Hence $\pi^*\rho \in \VV^i(Y,\k)$, i.e., 
$H_i(Y;\pi^*L_\rho)\ne 0$, if and only if 
$H_i(X;L_j\otimes_\k L_\rho)\ne 0$ for some $j$, 
which is equivalent to $\rho \in \VV^i(X,L_j)$ 
for some $1\le j\le s$.
\end{proof}

\begin{remark}
\label{rem:equivalence}
Call two irreducible representations $L$ and $L'$ of $G/N$ 
\emph{equivalent} if $L \cong L' \otimes_\k L_\rho$ for some 
rank-one representation $L_\rho$ of $G/N$; in which case
$\VV^i(X,L) = \rho^{-1}\cdot\VV^i(X,L')$.
Since $\k^\times$ is abelian, every rank-one representation of 
$G/N$ factors through $\ab(G/N)$, so the equivalence classes 
are orbits of $\Hom(\ab(G/N),\k^\times)$ acting on the 
irreducible representations of $G/N$.
\end{remark}

\section{Tropical varieties}
\label{sect:tropical}
In this section, we focus on tropical geometry, where the base ring $R$ 
is a field $\k$ or $\Z$. The reader may refer to the monograph \cite{MacStu} 
for the required background on tropical geometry.

\subsection{Tropical variety over a valued field}
\label{subsec:trop-valued-field}
Let $\k$ be a fixed field endowed with a possibly trivial valuation 
$\upsilon\colon\k^\times\to \R$ and $\upsilon(0)=\infty$. 
Consider the group $H=\Z^n$. Fixing a basis, we may identify 
the group ring $\k H$ with the Laurent polynomial ring 
$\k[x_1^{{\pm 1} },\ldots,x_n^{{\pm 1} }]$.

\begin{definition}
\label{def:tropical-variety}
For an ideal $I\subseteq \k[x_1^{{\pm 1} },\ldots,x_n^{{\pm 1} }]$, 
we recall the following two  ways to define the \textit{tropical variety of $I$}. 

\begin{enumerate} 
\item \label{trop1} Fix $\mathbf{w}\in \R^n$. For any nonzero polynomial 
$f=\sum\limits_{\textbf{u}\in\Z^n}a_{\textbf{u}} 
x^{\textbf{u}}\in \k[x_1^{{\pm 1} },\ldots,x_n^{{\pm 1} }]$,  the initial form 
$\init_{\textbf{w},\upsilon}(f)$ is the sum of all terms in $f$ 
where
\[
\min\limits_{{\textbf{u}}\in \Z^n} 
\left\{ \upsilon(a_{{\bf{u}}})+{\bf{u}}\cdot{\textbf{w}} \mid a_{\textbf{u}}\neq 0\right\}
\]
is achieved. The initial 
ideal $\init_{\textbf{w},\upsilon}(I)$ is the ideal generated by 
$\init_{\textbf{w},\upsilon}(f)$, where $f$ runs over $I$. Set 
\begin{equation*}
\label{def:trop-ideal}
\Trop_{\k, \upsilon}(I)\coloneqq \{\textbf{w}\in \R^n\mid 
\init_{\textbf{w},\upsilon}(I) \neq \k[x_1^{{\pm 1}},\ldots,x_n^{{\pm 1}}]\}.
\end{equation*}

\item \label{trop2} Let $\K$ be an algebraically closed field extending $\k$ 
such that the extension of $\upsilon$ to $\K$ is nontrivial, still denoted 
$\upsilon$. Such a field always exists and the specific choice of $\K$ is 
not important, as long as it is algebraically closed with a nontrivial 
valuation (see \cite[Thm.~3.2.4 and Rem.~3.2.5]{MacStu}). 
Denote by $V(I)\subset (\K^{\times})^n$ the zero locus of $I\otimes_{\k}\K$.
The tropical variety of $I$ is then defined as
\[
\Trop_{\k,\upsilon}(I) \coloneqq 
\overline{\{(\upsilon(x_1),\ldots,\upsilon(x_n))\mid 
(x_1,\ldots,x_n)\in V(I)\}},
\]
the closure in the Euclidean topology of the image of $V(I)$ 
under the coordinatewise valuation map $(\K^{\times})^n \to \R^n$.
\end{enumerate}
\end{definition}

The  fundamental theorem of tropical algebraic geometry in 
\cite[Thm.~3.2.3]{MacStu} shows the equivalence of the above two definitions.
Moreover, $\Trop_{\k,\upsilon}(I)$ only depends on $\sqrt{I}$. 
If $\sqrt{I}=\bigcap\limits_{j=1}^q P_j$, where $\{P_j\}_{j=1}^q$ 
are all minimal prime ideals containing $I$, then
\[
\Trop_{\k,\upsilon}(I)=\Trop_{\k,\upsilon}\left(\!\sqrt{I}\,\right)=
\bigcup_{j=1}^q \Trop_{\k,\upsilon}(P_j).
\]

We recall the following structure theorem \cite[Thm.~3.3.8]{MacStu}.

\begin{theorem}
\label{thm:structure-trop}
Let $I$ be a prime ideal in $\k[x_1^{{\pm 1}},\ldots,x_n^{{\pm 1}}]$.
Then $\Trop_{\k,\upsilon}(I)\subset \R^n$ is a 
$\upsilon(\k^{\times})$-rational polyhedral complex, 
pure of dimension $\dim V(I)$.
\end{theorem}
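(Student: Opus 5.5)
The statement is quoted from \cite[Thm.~3.3.8]{MacStu}. In the paper we would simply cite it. If we had to reprove it, the plan would follow the standard route: establish polyhedrality and rationality from the Gröbner complex, then prove purity by reducing, via projections, to the hypersurface case.

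\emph{Polyhedral structure.} First we homogenize. Let $I_{\mathrm{proj}}\subset \k[x_0,\dots,x_n]$ be the homogenization of $I\cap \k[x_1,\dots,x_n]$. For $\mathbf{w}\in\R^n$ we have $\init_{\mathbf{w},\upsilon}(I)=\k[x^{\pm1}]$ exactly when $\init_{(0,\mathbf{w}),\upsilon}(I_{\mathrm{proj}})$ contains a monomial. The Gröbner complex of the homogeneous ideal $I_{\mathrm{proj}}$ is a finite polyhedral complex: its cells are the closures of the loci where the initial ideal is constant. Finiteness follows from a universal Gröbner basis argument adapted to valued fields, using that there are only finitely many monomial initial ideals. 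Since $\upsilon$ takes values in $\Gamma=\upsilon(\k^\times)$, these cells are cut out by inequalities $\upsilon(a_{\mathbf{u}})+\mathbf{u}\cdot\mathbf{w}\le \upsilon(a_{\mathbf{u}'})+\mathbf{u}'\cdot\mathbf{w}$, so they are $\Gamma$-rational. The condition "the initial ideal contains no monomial" is constant on the relative interior of each cell. Hence $\Trop_{\k,\upsilon}(I)$, in the sense of Definition~\ref{def:tropical-variety}\eqref{trop1}, is a union of cells, and is therefore a $\Gamma$-rational polyhedral complex.

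\emph{Dimension.} Let $d=\dim V(I)$. For the upper bound, take $\mathbf{w}$ in the relative interior of a cell of dimension $e$. Then $\init_{\mathbf{w}}(I)$ is homogeneous with respect to the $e$-dimensional lineality of that cell, so $V(\init_{\mathbf{w}}(I))$ is invariant under an $e$-dimensional subtorus. Because passage to initial ideals preserves dimension (flat degeneration over the valuation ring), this gives $e\le d$. For the lower bound, we use a Noether normalization adapted to the torus: a generic monomial projection $\pi\colon(\K^\times)^n\to(\K^\times)^d$ restricts to a finite, dominant map on $V(I)$. By the fundamental theorem (equivalence of the two definitions), every point of $\Gamma^d$ lifts, so the linear image of $\Trop(I)$ under the tropicalization of $\pi$ is all of $\R^d$. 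This shows that the dimension is at least $d$ locally near the image of any point.

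\emph{Purity; the main obstacle.} The hard step is showing that \emph{every} maximal cell has dimension exactly $d$, and not merely that some cell does. We would first try induction on $n-d$ via projection. Primality of $I$ makes $V(I)$ irreducible, so for a generic coordinate projection $p\colon(\K^\times)^n\to(\K^\times)^{n-1}$ the image $\overline{p(V(I))}$ is irreducible of dimension $d$, and tropicalization commutes with $p$ (again by the fundamental theorem). Given a maximal cell $\tau$ of dimension $e<d$, choose the projection so that $\tau$ maps injectively and stays maximal in the image. This contradicts purity downstairs, which we know by induction. The base case $n-d=1$ is the hypersurface case, where $\Trop(f)$ is the codimension-one skeleton of the regular subdivision of the Newton polytope induced by $\upsilon$, so it is pure of dimension $n-1$. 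The delicate point is the genericity argument guaranteeing that a low-dimensional maximal cell survives as maximal under the projection. As a fallback, one can argue locally: the star of $\Trop(I)$ at $\mathbf{w}$ equals $\Trop(\init_{\mathbf{w}}(I))$ with trivial valuation, and this ideal is equidimensional of dimension $d$ (here primality of $I$ is used). Purity then reduces to the constant-coefficient case, and that case follows from the Bieri--Groves theorem.
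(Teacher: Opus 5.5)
Your proposal is correct and takes the same approach as the paper, which gives no argument of its own and simply quotes \cite[Thm.~3.3.8]{MacStu}; your optional sketch is the standard Maclagan--Sturmfels route (the Gr\"obner complex gives polyhedrality and rationality, and the identity $\mathrm{star}_{\mathbf{w}}=\Trop(\init_{\mathbf{w}}(I))$ reduces purity to the constant-coefficient case). For the record, the projection step you flag as delicate does go through: in the inductive step $n-d\ge 2$, so each $\sigma+\R v$ has dimension at most $n-1$, and for generic rational $v$ every set $\tau\cap(\sigma+\R v)$ with $\sigma$ not a face of $\tau$ has dimension $<e$, so near a generic point of $p(\tau)$ the tropical variety $\Trop(\overline{p(V(I))})$ is $e$-dimensional, contradicting purity downstairs.
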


When $H$ has nontrivial torsion, the character variety 
$\Hom(H,\K^\times)$ is a finite disjoint union $\coprod(\K^{\times})^n$,
whose components are translates of the identity component 
$(\K^{\times})^n$ by torsion characters; since torsion characters 
have valuation $0$, these translations are invisible to 
tropicalization. Writing $V_1,\ldots,V_q$ for the connected 
components of $V(I)$, each $V_j$ may be identified with a 
subset of $(\K^{\times})^n$ via such a translation, and
\[
\Trop_{\k,\upsilon}(I) \coloneqq \bigcup_{j=1}^{q} 
\overline{\{(\upsilon(x_1),\ldots,\upsilon(x_n))\mid 
(x_1,\ldots,x_n)\in V_j\}}.
\]

\subsection{Tropical varieties over rings} 
\label{subsec:trop-over-ring}
In this subsection, we follow \cite[Sec.~1.6]{MacStu} to define 
the tropical variety over a commutative Noetherian ring $R$. 
We refer to \cite[Sec.~3]{LL} and \cite{BG} for more details.

Let $H$ be a finitely generated abelian group, not necessarily free. 
Fix an additive character $\chi \in \Hom(H ;\R)$. 
For any nonzero $f=\sum_{h\in H} a_h h \in R H$, we denote by $\deg_\chi(f)$ 
the minimal value of $\chi(h)$ with $a_h\neq 0$, and call it the $\chi$-degree of $f$. 
The initial form $\init_\chi(f)$ is the sum of all nonzero terms $a_h h$ in $f$ 
such that $\chi(h)=\deg_\chi(f)$. For an ideal $I \subseteq RH$, its initial ideal with 
respect to $\chi$ is defined as
\[
\init_\chi (I)\coloneqq \langle\init_\chi(f)\mid f\in I\rangle.
\]
Following \cite[Sec.~1.6]{MacStu} and Definition \ref{def:tropical-variety}\eqref{trop1}, 
one can define the tropical variety over $R$ as follows. 

\begin{definition}
\label{def:tropical-ring}
The {\it tropical variety over $R$} of an ideal $I\subseteq RH $ 
is the following set 
\[
\Trop_R(I)\coloneqq\{\chi \in \Hom(H ;\R) \mid 
\init_\chi (I)\neq R H \}.
\]
\end{definition}

Note that for the zero character $\init_{0} I=I$.
Hence $0\in \Trop_R(I)$ if and only if $I$ is a proper ideal in $RH$, 
which implies that $\Trop_R (I)=\emptyset$ if and only if $I=RH$. 
From now on we always assume that $I$ is a \textit{proper} ideal. 
Moreover, by definition if $\chi\in\Trop_R (I)$, then $r\cdot\chi\in\Trop_R (I)$ 
for any positive real number $r$. Therefore, $\rS(\Trop_R(I))$ 
carries the same information as $\Trop_R(I)$. Here $\rS(V)$ 
denotes the intersection of the unit sphere in $H^1(H;\R)$ 
with a subset $V \subset H^1(H;\R)$. 

When $R$ is a field $\k$, the tropical variety $\Trop_\k(I)$ defined in 
Definition \ref{def:tropical-ring} is indeed  $\Trop_{\k, \upsilon_0}(I)$ 
with the trivial valuation $\upsilon_0$ on $\k$ defined in 
Definition \ref{def:tropical-variety}. When $R=\Z$, the tropical 
variety $\Trop_\Z(I)$ can be understood using the 
tropical varieties over various fields, as follows. 

\begin{proposition}[{\cite[Prop.~3.9]{LL}}]
\label{prop:three-trop-union-Z}
Let $I$ be an ideal in $\Z H$. Then $\Trop_\Z(I)=\Trop_\Z(\sqrt{I})$. 
Moreover, 
\[
\rS\big(\Trop_\Z(I)\big) = 
\rS\Big(\Trop_{\Q,\upsilon_0}(I\otimes_\Z \Q)\cup\bigcup\limits_{p \in \PP}
\big(\Trop_{\Q,\upsilon_p}(I\otimes_\Z \Q)\cup \Trop_{\F_p,\hat{\upsilon}_p}(I\otimes_\Z \F_p)\big)
\Big),
\]
where $\PP$ is a finite set of primes (depending on $I$), and 
in the union we are taking the trivial valuation $\upsilon_0$ on $\Q$, 
the $p$-adic valuation $\upsilon_p$ on $\Q$ and the trivial valuation 
$\hat{\upsilon}_p$ on $\F_p$, respectively. 
\end{proposition}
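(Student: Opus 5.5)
The plan is to split the statement into its two claims and reduce both to the behaviour of initial forms under the $\chi$-filtration of $\Z H$.

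\emph{Radical invariance.} I will show that $\init_\chi(\sqrt{I})\subseteq\sqrt{\init_\chi(I)}$; since $\init_\chi(I)\subseteq\init_\chi(\sqrt I)$ trivially, this gives $\init_\chi(I)=\Z H\iff\init_\chi(\sqrt I)=\Z H$. Take $f\in\sqrt I$ with $f^N\in I$. The $\chi$-degree-$N\deg_\chi(f)$ component of $f^N$ is exactly $\init_\chi(f)^N$. If $\init_\chi(f)^N\neq 0$, it equals $\init_\chi(f^N)\in\init_\chi(I)$. If $\init_\chi(f)^N=0$, which can happen when $H$ has torsion, it lies in $\init_\chi(I)$ trivially. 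Either way $\init_\chi(f)\in\sqrt{\init_\chi(I)}$. Since a proper ideal has a proper radical, the first claim follows for every $\chi$.

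\emph{Decomposition over fields.} An ideal $J\subseteq\Z H$ is proper iff it lies in some maximal ideal. Maximal ideals of $\Z H$ have residue field some $\F_q$, but it is more convenient to use the following equivalent form: $J$ is proper iff $J\otimes\Q\neq\Q H$ or $J\otimes\F_p\neq\F_pH$ for some prime $p$. I apply this to $J=\init_\chi(I)$ and compare it with the field-valued initial ideals, proving each inclusion in turn.

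For $\supseteq$: the trivial-valuation initial ideals $\init_\chi(I\otimes\Q)$ and $\init_\chi(I\otimes\F_p)$ are generated by images of $\init_\chi(f)$ with $f\in I$, possibly after clearing denominators or lifting. Hence properness of either forces properness of $\init_\chi(I)$. For the $p$-adic term, note that $\init_{\chi,\upsilon_p}$ of an element of $I\otimes\Q$, after rescaling by a power of $p$, is the reduction mod $p$ of a $\chi$-initial form of an element of $I$. So $\init_{\chi,\upsilon_p}(I_\Q)$ proper implies $\init_\chi(I)\otimes\F_p$ proper.

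For $\subseteq$: suppose $\init_\chi(I)\otimes\F_p$ is proper but $\init_\chi(I\otimes\F_p)$ is not. Then cancellation of leading coefficients mod $p$ must occur, and I would show that this is detected by the $p$-adic valuation on $\Q$. To do so, adjoin $p$ as an extra "variable" with weight $1$, regarding $I\otimes\Z_{(p)}$ over the DVR $\Z_{(p)}$. The Gröbner theory over a DVR (\cite[\S2.4]{MacStu}) then gives the dichotomy: either the special fibre $\init_\chi(I\otimes\F_p)$ is proper, or $\chi$ lies in $\Trop_{\Q,\upsilon_p}$, using the weight vector $(1,\chi)$ for the $p$-adic term and weight $(0,\chi)$ for the special fibre.

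\emph{Finiteness of $\PP$.} I expect this to be the main obstacle. I plan to take a finite Gröbner-type generating set of $I$ over $\Z$ that is universal with respect to the finitely many cones of the $\chi$-Gröbner fan; its existence follows from Noetherianity and the polyhedral structure, Theorem~\ref{thm:structure-trop}. Let $\PP$ be the primes dividing leading coefficients of these generators, together with those dividing the torsion of $\Z H/I$. For $p\notin\PP$, reduction mod $p$ commutes with taking initial ideals, and $\Trop_{\F_p}(I\otimes\F_p)=\Trop_{\Q,\upsilon_0}(I_\Q)$; likewise the $p$-adic tropical variety coincides with the trivial one. So these primes contribute nothing new.

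Finally, all sets involved are cones, so passing to $\rS(\cdot)$ completes the proof.
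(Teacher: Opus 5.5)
The radical-invariance argument and the reduction ``$J$ is proper iff $J\otimes\Q$ or some $J\otimes\F_p$ is proper'' are fine. The inclusion $\subseteq$, however, which is the heart of the statement, has a genuine gap.

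The dichotomy you attribute to Gr\"obner theory over a DVR is false. That dichotomy says: either $\init_\chi(I\otimes\F_p)$ is proper, or $\chi\in\Trop_{\Q,\upsilon_p}(I\otimes\Q)$. Take $H=\Z=\langle x\rangle$, $I=(x-p)$ and $\chi(x)=2$. Then $\init_\chi(I)=(p)$ is proper, $I\otimes\F_p=(x)$ is the unit ideal, and $\Trop_{\Q,\upsilon_p}(I\otimes\Q)=\{1\}$ does not contain $\chi$; only $\chi/2$ lies in it.

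The $p$-adic tropical variety is not a cone, so your closing sentence is wrong too. If your argument worked, it would give the set-level equality that the remark following the statement says fails. For instance, take the ideal $((1-t_1)^2-3t_2^2)$ of Example~\ref{ex:one-rel} and $\chi=(1,-2)$. The $\chi$-initial ideal is $(3)$, so $\chi\in\Trop_\Z$, yet $\chi$ lies in none of the field tropical varieties. Passing to $\rS$ is exactly what absorbs a positive scalar depending on $\chi$. Nothing in \cite[\S2.4]{MacStu}, which works with one fixed valuation on a field, produces such a scalar from properness of $\init_\chi(I)\otimes\F_p$.

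The missing idea is to realize $\chi$, up to a positive multiple, by a real valuation; this is the Bieri--Groves picture the paper points to via \cite{BG} and \cite[\S1.6]{MacStu}. For integral $\chi$, the argument goes as follows.
\begin{itemize}
\item Let $\tilde I\subseteq\Z[t]H$ be generated by the elements $t^{-\deg_\chi f}\sum_h a_h t^{\chi(h)}h$ for $f=\sum_h a_h h\in I$. This ideal is $t$-saturated, and $(\tilde I+(t))/(t)=\init_\chi(I)$.
\item Hence a prime $\mathfrak q\supseteq\tilde I+(t)$ contains a prime $\mathfrak q'\supseteq\tilde I$ with $t\notin\mathfrak q'$.
\item A DVR dominating $(\Z[t]H/\mathfrak q')_{\mathfrak q}$ then yields a prime $P\supseteq I$ and a valuation $v$ on $\Z H/P$ with $v\ge 0$ on $\Z$ and $v(h)=v(t)\chi(h)$ for $h\in H$.
\item By Ostrowski there are three cases. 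If $\Z H/P$ has characteristic $p$, then $\chi\in\Trop_{\F_p}(I\otimes\F_p)$. If $v|_\Q$ is trivial, then $\chi\in\Trop_{\Q,\upsilon_0}(I\otimes\Q)$. If $v|_\Q=c\,\upsilon_p$ with $c>0$, then $c^{-1}v(t)\chi\in\Trop_{\Q,\upsilon_p}(I\otimes\Q)$.
\end{itemize}
Non-rational $\chi$ then need a separate density and closure argument.

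There are also two smaller problems.

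In the $\supseteq$ direction your inclusions run the wrong way. First, $\init_\chi(I\otimes\F_p)$ is not generated by reductions of $\chi$-initial forms of elements of $I$. For $I=(x-p)$ and $\chi(x)=2$ it is the unit ideal, while all those reductions vanish. Second, $\upsilon_p$-initial forms need not be $\chi$-homogeneous; for example $\init_{\chi,\upsilon_p}(x-p)=x-1$ when $\chi(x)=1$. So they are not reductions of $\chi$-initial forms, even after rescaling by a power of $p$. In the $p$-adic case the inclusion you assert would in any event transfer properness in the wrong direction. What is true, and suffices, is
\[
\overline{\init_\chi(I)}\subseteq\init_\chi(I\otimes\F_p)
\quad\text{and}\quad
\overline{\init_\chi(I)}\subseteq\init_{\chi,\upsilon_p}(I\otimes\Q).
\]
The reason is that for $f\in I$ with $\init_\chi(f)\not\equiv 0\bmod p$, both field initial forms of $f$ equal $\overline{\init_\chi(f)}$.

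For the finiteness of $\PP$, Theorem~\ref{thm:structure-trop} says nothing about universal Gr\"obner bases over $\Z$. A workable substitute is a finite tropical basis of $I\otimes\Q$ for the trivial valuation, scaled into $I$. Let $\PP$ be the set of primes dividing one of its coefficients. For $p\notin\PP$, a basis element with monomial $\chi$-initial form keeps a monomial initial form both mod $p$ and for $\upsilon_p$. Hence $\Trop_{\Q,\upsilon_p}(I\otimes\Q)\cup\Trop_{\F_p}(I\otimes\F_p)\subseteq\Trop_{\Q,\upsilon_0}(I\otimes\Q)$, and such primes contribute nothing new in the valuation argument above.
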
 

\begin{remark}
\label{rem:tropical over Z}
In general, we have
\[
\Trop_\Z(I) \neq \Trop_{\Q,\upsilon_0}(I\otimes_{\Z}\Q)\cup\bigcup\limits_{p \in \PP}
\big(\Trop_{\Q,\upsilon_p}(I\otimes_{\Z}\Q)\cup \Trop_{\F_p,\hat{\upsilon}_p}(I\otimes_{\Z}\F_p)\big),
\]
although Proposition \ref{prop:three-trop-union-Z} shows that 
the two sets coincide after projection onto the unit sphere.   
\end{remark}

\subsection{Tropical varieties of twisted jump loci}
\label{subsec:twist-jump-trop}
Let $X$ be a connected finite CW complex with $\pi_1(X)=G$ and $b_1(X)>0$.
Set $H=\ab(G)$. Let $\sigma\colon \pi_1(X)\to \GL_r(R)$ be a representation 
with $R$ being $\Z$ or a field $\k$.
Set $\cJ_{\leq q}(X,L_\sigma)\coloneqq \bigcap_{0\leq i \leq q} \cJ_{ i}(X,L_\sigma)$, 
which is an ideal in $R[H]$. If $R=\Z$, we get the 
tropical variety $\Trop_\Z( \cJ_{\leq q}(X,L_\sigma) )$.
If $R=\k$ is a field, then $\k$ is a field extension of $\Q$ or $\F_p$.
In the first case, by Ostrowski's theorem, every nontrivial valuation 
on $\Q$ is equivalent to the $p$-adic valuation $\upsilon_p$ for some 
prime $p$, see \cite[Thm.~3.1.4]{Go}; in addition, one may also take 
the trivial valuation $\upsilon_0$ on $\k$.
In the latter case ($\k$ a field extension of $\F_p$), 
the only valuation on $\F_p$ is the trivial valuation $\hat{\upsilon}_p$.
Moreover, if $R=\Z$, Proposition \ref{prop:three-trop-union-Z} shows 
that one can compute $\rS(\Trop_\Z( \cJ_{\leq q}(X,L_\sigma) ))$ by the 
projections of three types of tropical varieties over fields with the above 
mentioned valuations.

\begin{remark}
\label{rem:tropical-twisted-cvs}
With notation as in Proposition~\ref{prop:finite-cover} and 
Remark~\ref{rem:equivalence}: since $G/N$ is finite, every 
rank-one representation $L_\rho$ of $G/N$ has finite order, so 
$\upsilon(\rho)=0$ for any valuation $\upsilon$ on $\k$. 
Equivalent representations therefore have the same tropicalization, 
and
\[
\Trop_{\k,\upsilon}(\VV^i(X,L_\pi))=\bigcup_L \Trop_{\k,\upsilon}(\VV^i(X,L)),
\]
where $L$ runs over one representative from each equivalence class 
of irreducible $\k$-representations of $G/N$.
\end{remark}

\section{Bieri--Neumann--Strebel--Renz invariants}
\label{sec:bnsr}

In this section, we review the definition of the $\Sigma$-invariants 
of a group $G$ and, more generally, of a space $X$, following 
the approach from \cite{Bi07, FGS, PS-plms, Su-pisa, Su-mathann}.

\subsection{The \texorpdfstring{$\Sigma$}{Sigma}-invariants of a chain complex}
\label{subsec:bnsr cc}
Let $C=(C_i,\partial_i)_{i\ge 0}$ be a chain complex over a ring $R$, 
and let $q$ be a positive integer. We say $C$ is of {\em finite $q$-type}\/ 
if there is a chain complex $C'$ of finitely generated, projective 
(left) $R$-modules and a chain map $C'\to C$ inducing isomorphisms 
$H_i(C')\to H_i(C)$ for $i<q$ and an epimorphism 
$H_q(C')\to H_q(C)$. For a free chain complex $C$, this is equivalent 
to being chain-homotopy equivalent to a free chain complex $D$ 
for which $D_i$ is finitely generated for all $i\leq q$.

Now let $G$ be a finitely generated group.  The character sphere 
\begin{equation}
\label{eq:sg}
\rS(G)\coloneqq (\Hom(G,\R)\setminus\{0\})/\R^{+},
\end{equation}
is the set of nonzero homomorphisms $G\to \R$ modulo 
homothety. To simplify notation, we will usually denote both 
a nonzero homomorphism $\chi\colon G\to \R$ and its equivalence 
class, $[\chi] \in \rS(G)$, by the same symbol, $\chi$. The character 
sphere may be identified with the unit sphere $S^{n-1}$ in the 
real vector space $\Hom(G,\R)\cong \R^n$, where $n=b_1(G)$.  

Given a nonzero homomorphism $\chi\colon G\to \R$, the set  
$G_{\chi}\coloneqq\{ g \in G \mid \chi(g)\ge 0\}$ 
is a submonoid of $G$, which depends only on  
$[\chi]\in \rS(G)$. The monoid ring 
$\Z{G}_{\chi}$ is a subring of the group ring $\Z{G}$; 
thus, any $\Z{G}$-module naturally acquires the structure of a 
$\Z{G}_{\chi}$-module, by restriction of scalars.

\begin{definition}[\cite{FGS}]  
\label{def:sigma chain}
Let $C$ be a chain complex over $\Z{G}$.  For each 
integer $q\ge 0$, the {\em $q$-th 
Bieri--Neumann--Strebel--Renz invariant}\/ of $C$ 
is the set
\begin{equation}
\label{eq:sigmakc}
\Sigma^q(C)=\big\{\chi\in \rS(G) \mid 
\text{$C$ is of finite $q$-type over $\Z{G}_{\chi}$}\big\}\, .
\end{equation}
\end{definition}

Suppose now that $N\triangleleft G$ is a normal 
subgroup such that the quotient group $G/N$ is abelian.  Let  
$\rS(G,N)$ be the set of homomorphisms $\chi \in \rS(G)$ 
for which $N\le \ker (\chi)$. Then $\rS(G,N)$ 
is a great subsphere of $\rS(G)$, obtained by intersecting 
the unit sphere $\rS(G)\subset H^1(G;\R)$ 
with the image of the linear map 
$\kappa^*\colon H^1(G/N;\R) \inj H^1(G;\R)$, 
where  $\kappa\colon G\surj G/N$ is the canonical projection. 
Finally, let $C$ be a chain complex of free $\Z{G}$-modules, with 
$C_i$ finitely generated for $i\le q$, and let $N$ be a 
normal subgroup of $G$ such that $G/N$ is abelian. 
Then, as shown in \cite{FGS},  $C$ is of finite $q$-type when 
restricted to $\Z{N}$ if and only if $\rS(G,N)\subset \Sigma^q(C)$.

\subsection{The \texorpdfstring{$\Sigma$}{Sigma}-invariants of a CW-complex}
\label{subsec:fgs cw}
Let $X$ be a connected CW-complex with 
finite $1$-skeleton, and let $G=\pi_1(X,x_0)$ be its fundamental 
group.  A choice of classifying map $X\to K(G,1)$ yields   
an induced isomorphism, $H^1(G;\R) \isom H^1(X;\R)$, 
which identifies the respective unit spheres, 
$\rS(G)$ and $\rS(X)$.  The cell structure on $X$ lifts to a 
cell structure on the universal cover $\wX$, invariant under the 
action of $G$ by deck transformations. 
Thus, the cellular chain complex $C_*(\wX;\Z)$ 
is a chain complex of free $\Z{G}$-modules.  

\begin{definition}
\label{def:fgs cw}
For each $q> 0$, the {\em $q$-th 
Bieri--Neumann--Strebel--Renz invariant}\/ 
of $X$ is the subset of $\rS(X)$ given by
$\Sigma^q(X;\Z)=\Sigma^q(C_*(\wX;\Z))$.
\end{definition}

We will denote by $\Sigma^q(X;\Z)^{\compl}$ the complement 
of $\Sigma^q(X;\Z)$ in $\rS(X)$.  It is shown in \cite{FGS} that 
$\Sigma^q(X;\Z)$ is an open subset of $\rS(X)$, which depends 
only on the homotopy type of $X$. 

\subsection{The \texorpdfstring{$\Sigma$}{Sigma}-invariants of a group}
\label{subsec:sigma-groups} 

Let $G$ be a finitely generated group, and let $\Cay(G)$ be the Cayley 
graph associated to a fixed finite generating set. The invariant $\Sigma^1(G)$ 
of Bieri, Neumann, and Strebel \cite{BNS} is the set of homomorphisms 
$\chi\in \rS(G)$ for which the induced subgraph of $\Cay(G)$ on vertex 
set $G_{\chi}$ is connected. The BNS set is an open subset of $\rS(G)$, 
which does not depend on the choice of finite generating set for $G$. 
The rational points on $\rS(G)$ correspond to epimorphisms 
$\chi\colon G\to \Z$; the kernel of $\chi$ is 
finitely generated if and only if both $\chi$ and $-\chi$ belong to $\Sigma^1(G)$. 
Additionally, the complements of the $\Sigma$-invariants enjoy 
the following naturality property.  

\begin{proposition}[\cite{BNS}] 
\label{prop:sigma1-nat}
Suppose $\varphi\colon G\surj Q$ is a 
surjective group homomorphism. Then the induced embedding, 
$\varphi^*\colon \rS(Q)\inj \rS(G)$, restricts to an injective map 
between the complements of the respective BNS-invariants,
$\varphi^*\colon \Sigma^1(Q)^{\compl} \inj  \Sigma^1(G)^{\compl}$.
\end{proposition}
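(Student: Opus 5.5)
Since $\varphi$ is surjective, the pullback $\varphi^*\colon \Hom(Q,\R)\to\Hom(G,\R)$, $\chi\mapsto\chi\circ\varphi$, is an injective linear map. It commutes with positive homotheties, so it descends to an injective map $\rS(Q)\inj\rS(G)$. It therefore suffices to show that $\chi\notin\Sigma^1(Q)$ implies $\chi\circ\varphi\notin\Sigma^1(G)$. I will prove the contrapositive: if $\chi\circ\varphi\in\Sigma^1(G)$, then $\chi\in\Sigma^1(Q)$.

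I will use the Cayley graph definition recalled above, together with the fact, also recalled there, that $\Sigma^1$ does not depend on the choice of finite generating set. Fix a finite generating set $S$ of $G$ and take $\varphi(S)$ as the generating set of $Q$; it generates $Q$ because $\varphi$ is onto. Then $\varphi$ induces a graph map $\Cay(G,S)\to\Cay(Q,\varphi(S))$. It sends a vertex $g$ to $\varphi(g)$, and the edge from $g$ to $gs$ to the edge from $\varphi(g)$ to $\varphi(g)\varphi(s)$. If $\varphi(s)=1$, this edge collapses to a loop at a vertex, which is harmless. The map is surjective on vertices and on edges. Since $(\chi\circ\varphi)(g)=\chi(\varphi(g))$, we have $\varphi(G_{\chi\circ\varphi})=Q_\chi$ and $\varphi^{-1}(Q_\chi)=G_{\chi\circ\varphi}$. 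So the induced subgraph on $G_{\chi\circ\varphi}$ maps onto the induced subgraph on $Q_\chi$, with every vertex of $Q_\chi$ hit.

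Now suppose the subgraph on $G_{\chi\circ\varphi}$ is connected. Take $a,b\in Q_\chi$ and choose lifts $\tilde a,\tilde b\in G_{\chi\circ\varphi}$. Join them by an edge path inside the subgraph on $G_{\chi\circ\varphi}$. Its image is an edge path from $a$ to $b$ all of whose vertices lie in $Q_\chi$, after deleting the degenerate loops. Hence the subgraph on $Q_\chi$ is connected, that is, $\chi\in\Sigma^1(Q)$.

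The only delicate point is the bookkeeping of generating sets: the image generating set $\varphi(S)$ may contain the identity or repeated elements. This is handled by the independence of $\Sigma^1$ from the generating set, or directly by discarding trivial elements, which changes the Cayley graph only by loops and multiple edges and does not affect connectivity. With that settled, the argument is complete.
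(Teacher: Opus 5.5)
Your proof is correct and complete. You take $\varphi(S)$ as the generating set for $Q$, lift vertices of $Q_\chi$ to $G_{\chi\circ\varphi}$ using surjectivity, and push edge paths forward along the induced map of Cayley graphs; this is the standard argument for this naturality property. The paper gives no proof of its own (it only cites \cite{BNS}), so there is no separate route to compare against.
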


The BNS invariant was generalized by Bieri and Renz \cite{BR}, 
as follows. For a $\Z{G}$-module $M$, define  
$\Sigma^{q}(G; M)\coloneqq\Sigma^{q}(F_{\bullet})$, where 
$F_{\bullet} \to M$ is a projective $\Z{G}$-resolution of $M$.  
In particular, this yields invariants $\Sigma^{q}(G;\Z)$, 
where $\Z$ is viewed as a trivial $\Z{G}$-module; 
clearly, $\Sigma^{q}(G;\Z)=\Sigma^{q} (K(G,1),\Z)$.  
As noted in \cite[\S 1.3]{BR}, $\Sigma^1(G)=-\Sigma^1(G;\Z)$.  

\subsection{Novikov--Sikorav completion}
\label{subsec:novikov}
Let $G$ be a group. 
The {\em Novikov--Sikorav completion}\/ of the group ring $\Z{G}$ 
with respect to a homomorphism $\chi\colon G\to \R$ consists of 
all formal sums $\sum n_g g\in \Z^{G}$, 
having the property that, for each $c\in \R$, the set 
\begin{equation}
\label{eq:nov-sik}
\big\{g\in G \mid \text{$n_g \ne 0$ and $\chi(g) \ge c$}\big\}
\end{equation} 
is finite, see \cite{Novikov, Sk87, Far}. 
Equivalently, an element $\alpha \in \widehat{\Z{G}}_{\chi}$ 
can be represented as a formal sum 
$\sum_{i=1}^\infty n_i g_i$ with $n_i \in \Z \setminus \{0\}$ 
such that the sequence of 
values $(\chi(g_i))_{i \in \N}$ is non-increasing  and tends to $-\infty$., i.e., 
$\chi(g_1) \ge \chi(g_2) \ge \chi(g_3) \ge \cdots$, 
and $\lim_{i \to \infty} \chi(g_i) = -\infty$. 
With the usual addition and with multiplication defined by 
$(\sum n_g g) \cdot  (\sum m_h h) = \sum (n_g m_h) g h$, 
the Novikov--Sikorav completion, $\widehat{\Z{G}}_{\chi}$, is a ring 
containing $\Z{G}$ as a subring. Consequently, $\widehat{\Z{G}}_{\chi}$ 
carries a natural structure of left $\Z{G}$-module (we will also view 
it as a right $\widehat{\Z{G}}_{\chi}$-module). 
For instance, if 
$G=\Z=\langle t \rangle$ and $\chi(t)=1$, then 
$\widehat{\Z{G}}_{\chi}=\Z[[t^{-1}]][t]=
\{\sum_{i\le k} n_i t^i \mid \text{$n_i\in \Z$, for some $k\in \Z$}\}$. 

Moreover, the Novikov--Sikorav completion enjoys the following 
functoriality property. Let $\phi\colon G\to K$ be a homomorphism, and 
let $\bar\phi\colon \Z^{G}\to \Z^{K}$ be its linear extension to 
formal sums. If $\chi\colon K\to \R$ is a character, then 
$\bar\phi$ restricts to a morphism of topological rings 
between the corresponding completions, $\hat\phi \colon  
\widehat{\Z{G}}_{\chi\circ \phi} \to \widehat{\Z{K}}_{\chi}$. 

\subsection{Novikov--Sikorav homology}
\label{subsec:novikov-sikorav}
In his thesis \cite{Sk87}, J.-Cl.~Sikorav reinterpreted the BNS invariant 
of a finitely generated group $G$ in terms of Novikov homology. 
This interpretation was extended to all BNSR invariants 
by Bieri \cite{Bi07}, and later to the BNSR invariants of 
CW-complexes by Farber, Geoghegan, and Sch\"{u}tz \cite{FGS}. 

Let $X$ be a connected CW-complex and let $G=\pi_1(X)$. 
Recall that the homology groups of $X$ with coefficients in a left 
$\Z{G}$-module $M$ are given by 
$H_{i}(X;M)\coloneqq H_i( C_*(\widetilde{X};\Z)\otimes_{\Z{G}} M )$, 
where $\widetilde{X}$ is the universal cover of $X$ and $C_*(\widetilde{X};\Z)$ 
is its cellular chain complex, viewed as a right $\Z{G}$-module via the 
cellular action of $G$ on the chains of $\widetilde{X}$. 
We will use in an essential way the following theorem, which expresses the 
BNSR invariants of $X$ as the vanishing loci for homology with 
coefficients in the Novikov--Sikorav completions of $G$.  

\begin{theorem}[\cite{FGS}]
\label{thm:bns novikov}
If $X$ is a connected CW-complex with finite $q$-skeleton, then    
\begin{equation}
\label{eq:bnsr fgs}
\Sigma^{q}(X; \Z)= \big\{ \chi \in \rS(X) \mid
H_{i}(X; \widehat{\Z{G}}_{-\chi})=0,\: \text{for all $i\le q$} \big\}\, .
\end{equation}
\end{theorem}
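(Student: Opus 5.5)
Since $X$ has finite $q$-skeleton, $C=C_*(\wX;\Z)$ is a complex of free $\Z{G}$-modules, finitely generated in degrees $\le q$. By Definition~\ref{def:fgs cw} the statement is about $C$ alone: $\chi\in\Sigma^q(C)$ if and only if $H_i(C\otimes_{\Z{G}}\widehat{\Z{G}}_{-\chi})=0$ for $i\le q$. I would prove this in the spirit of Sikorav and Bieri, passing through a geometric intermediate criterion.

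Fix a lift of each cell, which gives a $\Z{G}$-basis. Define the valuation $v_\chi(c)$ of a chain $c=\sum n_{g,e}\, e g$ as $\min\{\chi(g)\mid n_{g,e}\neq 0\}$. The intermediate criterion, which I expect to follow from Definition~\ref{def:sigma chain} and the Bieri--Renz argument, is the following. $C$ is of finite $q$-type over $\Z{G}_\chi$ if and only if there is a $\Z{G}$-chain map $\varphi\colon C^{(q)}\to C^{(q)}$ on the $q$-truncation, together with a chain homotopy $h$ from $\varphi$ to $\id$ (so $\id-\varphi=\partial h+h\partial$ in degrees $\le q$), such that:
\begin{itemize}
\item $\varphi$ raises $v_\chi$ by a uniform $\varepsilon>0$ on basis cells;
\item $h$ lowers $v_\chi$ by at most a bounded amount.
\end{itemize}
For the forward implication, take a finite free $\Z{G}_\chi$-model $C'\simeq C$ in low degrees. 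Conjugating by a group element $t$ with $\chi(t)>0$ then produces such a push.

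With the push in hand, the implication ``$\Sigma^q\Rightarrow$ vanishing'' is formal. Here $\varphi$ pushes chains toward $+\chi$. For a cycle $z$ in $C_i\otimes\widehat{\Z{G}}_{-\chi}$ with $i\le q$, we have
\[
z=\varphi(z)+\partial h(z), \qquad\text{hence}\qquad z=\varphi^{n}(z)+\partial\Bigl(h\sum_{k<n}\varphi^k z\Bigr).
\]
The valuation bounds make $\varphi^n z\to 0$, and make $\sum_k h\varphi^k z$ converge in the completion, because the completion allows infinite tails in the $+\chi$ direction. Thus $z$ is a boundary. Degree $0$ is handled the same way, using the augmentation.

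The converse is where I expect the real difficulty. Assume $H_i(C\otimes\widehat{\Z{G}}_{-\chi})=0$ for $i\le q$. Then the truncated complex of finitely generated free $\widehat{\Z{G}}_{-\chi}$-modules in degrees $\le q$ is acyclic, so it admits a partial chain contraction $s$ whose matrices have entries in $\widehat{\Z{G}}_{-\chi}$. My plan is to truncate each entry of $s$ to a finite sum $s_0$, keeping only the terms with $-\chi$-value above a large threshold. Then $\id-(\partial s_0+s_0\partial)$ differs from $0$ by matrices all of whose terms lie far in the $+\chi$ direction. Setting $\varphi\coloneqq\id-(\partial s_0+s_0\partial)$ gives a $\Z{G}$-chain map, homotopic to the identity via the finite homotopy $s_0$, that increases $v_\chi$ uniformly. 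The intermediate criterion then puts $\chi$ in $\Sigma^q(C)$. The delicate points are:
\begin{itemize}
\item making $s_0$ a genuine homotopy degree by degree, which requires an inductive correction, since a truncated contraction is only an approximate contraction;
\item tracking the left/right module conventions, which produce the sign in $-\chi$.
\end{itemize}
I would handle the first by building $\varphi$ inductively on $i\le q$, using that $1-x$ is invertible in the completion when $x$ is supported in the positive direction.
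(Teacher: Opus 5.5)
\textbf{Verdict: the proposal has a genuine gap.} It establishes ``push $\Leftrightarrow$ Novikov vanishing'' but only asserts the link between a push and the actual definition of $\Sigma^q(X;\Z)$.

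The paper does not prove this statement; it quotes it from \cite{FGS}, whose argument has the same architecture as yours: finite $q$-type over $\Z G_\chi$ $\Leftrightarrow$ existence of a push $\Leftrightarrow$ Novikov vanishing. Your treatment of the second equivalence is sound. The geometric series converges; note that $h_q(C_q)$ lies in a finitely generated free summand of $C_{q+1}$, so nothing breaks if $C_{q+1}$ is infinitely generated. Truncating a partial contraction $s$ over $\widehat{\Z G}_{-\chi}$ does produce a push.

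The gap is the first equivalence, which carries the real content of the theorem and which you only assert. In particular, the implication ``push $\Rightarrow$ $C$ is of finite $q$-type over $\Z G_\chi$'' is never argued, and your proof of the inclusion $\supseteq$ rests on it. It is not formal:
\begin{itemize}
\item Over $\Z G_\chi$ the complex $C$ is neither finitely generated nor projective; already $\Z[t^{\pm1}]$ is not projective over $\Z[t]$.
\item The natural finitely generated candidate does not work directly. This is the subcomplex $C_{\ge 0}$ spanned by translates $eg$ with $\chi(g)\ge 0$, after normalizing the lifts of cells so that it is a subcomplex. A cycle of $C_{\ge 0}$ that bounds in $C$ is only shown by your identities to bound in $C_{\ge -D}$, because $h$ loses up to $D$.
\end{itemize}
What is needed is a proof that $C$ is dominated through degree $q$ by a finitely generated free subcomplex such as $C_{\ge 0}t^{-N}$ for $N$ large. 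The maps $\varphi^k$ and homotopies $h\varphi^k$ make a suitable power of $\varphi$ a homotopy idempotent there. One must then convert this domination into a finite $q$-type model, e.g.\ by splitting the idempotent or by a Brown-type criterion for the filtration $\{C_{\ge 0}t^{-n}\}_{n\ge 0}$.

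The forward implication is also thinner than your one sentence suggests. Since $C$ is not projective over $\Z G_\chi$, a finite-type model gives no homotopy equivalence $C'\simeq C$. It gives only a map $f\colon C'\to C$ that is bijective on $H_{<q}$ and onto on $H_q$. From there you must:
\begin{itemize}
\item lift the inclusion of $C_{\ge 0}t^{-N}$ through $f$, which is possible since the cone of $f$ is acyclic through degree $q$;
\item conjugate the composite $C_{\ge 0}t^{-N}\to C'\to C$ by $t^N$, which lands in $C_{\ge N\chi(t)-d}$ with $d$ depending only on $f(C')$;
\item note that this map is only $\Z G_\chi$-linear, and extend it from $C_{\ge 0}$ to a $\Z G$-linear push via $C=C_{\ge 0}\otimes_{\Z G_\chi}\Z G$.
\end{itemize}

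By contrast, the step you single out as delicate is automatic. For any $\Z G$-linear $s_0$, the map $\id-(\partial s_0+s_0\partial)$ is a chain map homotopic to $\id$ via $s_0$. No inductive correction or inversion of $1-x$ is needed.
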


In particular, the BNS set $\Sigma^1(G)=-\Sigma^1(G;\Z)$ consists 
of those characters $\chi \in \rS(G)$ for which both $H_{0}(G; \widehat{\Z{G}}_{\chi})$ 
and $H_{1}(G; \widehat{\Z{G}}_{\chi})$ vanish.

\begin{remark}
\label{rem:signs}
There were several steps along the way where we had to make a choice 
of sign in the various definitions of BNS invariants. We refer 
to \cite[Remark 5.7]{Su-mathann} for a detailed explanation.
\end{remark}

\section{An improved bound for the BNSR-invariants}
\label{sect:twist-bound}

Throughout this section, $\k$ denotes a field with a possibly 
trivial valuation $\upsilon\colon \k^\times \to \R$.
There is then an algebraically closed field $\K$ extending $\k$ 
such that the extension of $\upsilon$ to $\K$ is nontrivial 
(still denoted $\upsilon$): if $\upsilon$ is nontrivial, take 
$\K=\overline{\k}$ with an extended valuation; if $\upsilon$ 
is trivial, take $\K$ to be the field of Puiseux series 
$\bigcup_{n\ge 1}\overline{\k}(\!(t^{1/n})\!)$ when $\ch(\k)=0$, 
or $\overline{\k}(\!(t^{\Q})\!)$ otherwise; see \cite{MacStu}.

\subsection{Valuations and characters}
\label{subsec:vals-chars}
Let $G$ be a finitely generated group. Fix a representation 
$\sigma\colon G\to \GL_r(\k)$.
For a multiplicative character $\rho\colon G\to \K^{\times}$, set 
$\chi= \upsilon\circ \det \circ(\sigma\otimes\rho)$, where $\det$ 
is the determinant function. Then we obtain 
a commuting diagram,
\[
\begin{tikzcd}
    G \ar[d, "\chi"] \ar[r, "\sigma\otimes\rho"] & \GL_r(\K) \ar[d,"\det"]\\
    \R &  \K^{\times}. \ar[l, "\upsilon"]
\end{tikzcd}
\]

Let $\hat{\K}$ be the topological completion of $\K$ with respect 
to the absolute value defined by the valuation $\upsilon$.  Then $\hat{\K}$ 
is a field, and the map to the completion, $\iota\colon \K \inj \hat{\K}$, 
is a field extension.

\begin{lemma}
\label{lem:nov-mod}
Assume:
\begin{enumerate}[label=\upshape(\roman*), itemsep=2pt]
\item There exists $d\in\R$ such that, for all $g\in G$ and $1\le i,j\le r$, 
\begin{equation}
\label{eq:val-bound}
\upsilon([\sigma(g)]_{ij}) \ge d.
\end{equation}
\item $\upsilon(\det(\sigma(g)))=0$ for all $g\in G$.
\end{enumerate}
Then:
\begin{enumerate}[itemsep=2pt]
\item \label{nov1} 
$\chi = r(\upsilon\circ\rho)$;
\item \label{nov2} 
$\hat{\K}^r$ is a left $\widehat{\Z G}_{-\chi}$-module via $\sigma\otimes\rho$.
\end{enumerate}
\end{lemma}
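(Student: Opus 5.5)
Part \eqref{nov1} is a direct computation. For $g\in G$ we have $\det\bigl((\sigma\otimes\rho)(g)\bigr)=\det\bigl(\rho(g)\sigma(g)\bigr)=\rho(g)^r\det(\sigma(g))$. Hence
\[
\chi(g)=\upsilon\bigl(\rho(g)^r\det\sigma(g)\bigr)=r\,\upsilon(\rho(g))+\upsilon(\det\sigma(g))=r\,\upsilon(\rho(g)),
\]
where the last step uses hypothesis (ii). This gives $\chi=r(\upsilon\circ\rho)$.

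For part \eqref{nov2}, I will define the action of $\alpha=\sum_g n_g g\in\widehat{\Z G}_{-\chi}$ on $v\in\hat{\K}^r$ by
\[
\alpha\cdot v=\sum_g n_g\,\rho(g)\sigma(g)v,
\]
and first show that this series converges in $\hat{\K}^r$ for the valuation topology. On $\hat{\K}^r$ I use the sup-valuation $\upsilon(w)=\min_i\upsilon(w_i)$. By (i) and the ultrametric inequality,
\[
\upsilon\bigl(\rho(g)\sigma(g)v\bigr)\ \ge\ \upsilon(\rho(g))+d+\upsilon(v)=\tfrac1r\chi(g)+d+\upsilon(v),
\]
and $\upsilon(n_g)\ge 0$ since integers have nonnegative valuation. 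Membership of $\alpha$ in $\widehat{\Z G}_{-\chi}$ means that for each $c$ only finitely many $g$ with $n_g\neq0$ satisfy $-\chi(g)\ge -c$, i.e.\ $\chi(g)\le c$. So the valuations of the terms tend to $+\infty$: for every $N$, all but finitely many terms have valuation $>N$. In a complete non-archimedean field this is exactly the criterion for an (unconditionally) convergent series. The limit is also independent of ordering, which matters because the formal sum is indexed by $G$, not by $\N$.

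Next I check the module axioms: additivity in $\alpha$ and in $v$, and the unit acting as the identity, are immediate. The substantive point is associativity, $(\alpha\beta)\cdot v=\alpha\cdot(\beta\cdot v)$, where $\alpha\beta=\sum_k\bigl(\sum_{gh=k}n_g m_h\bigr)k$. I will expand both sides as the double series $\sum_{g,h}n_g m_h\,\rho(gh)\sigma(gh)v$ and justify regrouping. The term $(g,h)$ has valuation at least $\frac1r(\chi(g)+\chi(h))+d+\upsilon(v)$. Because $\chi(g)$ is bounded below on $\operatorname{supp}\alpha$ (only finitely many $g$ with $\chi(g)\le c$, so a minimum exists), and likewise for $\beta$, for each bound $N$ only finitely many pairs $(g,h)$ give terms of valuation $\le N$. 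So the double family is summable in the ultrametric sense, and any regrouping, by $k=gh$ or iterated, gives the same limit. On the left one first needs $\alpha\cdot(\beta\cdot v)$ written termwise, i.e.\ continuity of each $\rho(g)\sigma(g)$ on $\hat{\K}^r$, which holds since it is a linear map with entries of valuation $\ge d+\upsilon(\rho(g))$.

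The main obstacle is this convergence and rearrangement bookkeeping, not any conceptual issue. The sign convention also needs care: $\widehat{\Z G}_{-\chi}$ allows infinitely many terms with $\chi\to+\infty$, which is exactly the direction in which valuations grow, so the sign works. Hypothesis (ii) is used only to identify $\chi$ with $r(\upsilon\circ\rho)$, and hypothesis (i) is what makes the entries of $\sigma(g)$ unable to cancel the decay coming from $\rho$.
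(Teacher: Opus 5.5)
Your proof is correct and follows the paper's argument: the determinant computation for (1) using hypothesis (ii), and for (2) the termwise bound $\upsilon\bigl(n_g\rho(g)\sigma(g)v\bigr)\ge \frac{1}{r}\chi(g)+d+\upsilon(v)$ combined with $\chi(g)\to+\infty$ along the support of an element of $\widehat{\Z G}_{-\chi}$, which gives convergence in $\hat{\K}^r$. You also verify associativity through summability of the double family indexed by pairs $(g,h)$, a step the paper leaves implicit since it only checks convergence on the standard basis vectors.
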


\begin{proof}
For any $g\in G$,
\[
\chi(g)
= \upsilon(\det(\sigma(g)\otimes \rho(g)))
= \upsilon(\det(\sigma(g))) + r\,\upsilon(\rho(g))
= r\,(\upsilon\circ \rho)(g),
\]
since $\upsilon(\det(\sigma(g)))=0$. This proves \eqref{nov1}.

For \eqref{nov2}, let $\sum n_i g_i \in \widehat{\Z G}_{-\chi}$ and 
$e_k=(0,\cdots, 0, 1, 0,\cdots, 0)\in \hat{\K}^r$ the $k$-th standard basis vector. 
The $j$-th coordinate of $(\sigma\otimes\rho)(\cdot)e_k$ is
\[
\sum_i n_i\,\rho(g_i)\,[\sigma(g_i)]_{kj}.
\]
Using $\upsilon(n_i)\ge 0$, the bound on matrix entries, and \eqref{nov1}, we obtain
\[
\upsilon(n_i\rho(g_i)[\sigma(g_i)]_{kj})
\ge \tfrac{1}{r}\chi(g_i)+d.
\]
Since $\chi(g_i)\to +\infty$ in the Novikov completion, 
$\upsilon(n_i\rho(g_i)[\sigma(g_i)]_{kj}) \to +\infty$ and the terms tend to $0$
in $\hat{\K}$, proving convergence. For a general vector $e\in \hat{\K}^r$, 
each coordinate is a finite linear combination of series of the standard 
basis vectors already considered.
\end{proof}

\begin{remark}
\label{rem:val-bound}
The lower bound \eqref{eq:val-bound} is essential for the convergence
argument in Lemma~\ref{lem:nov-mod}. It ensures that the growth of
$\chi(g_i)\to +\infty$ forces the terms $n_i\,\rho(g_i)\,[\sigma(g_i)]_{kj}$ 
to tend to zero in $\hat{\K}$.

Without such a bound, the $\widehat{\Z G}_{-\chi}$-module structure on
$\hat{\K}^r$ can fail. For instance, let $G=\Z$, $\K=\Q_p$ with the $p$-adic
valuation $\upsilon_p$, and consider the representation
\[
\sigma\colon \Z\to \GL_2(\Q_p), \qquad
1\mapsto
\begin{pmatrix}
p & 0 \\
0 & p^{-1}
\end{pmatrix}.
\]
Then $\det(\sigma(n))=1$ for all $n$, but
\[
\upsilon_p(p^n)=n \to +\infty,
\qquad
\upsilon_p(p^{-n})=-n \to -\infty,
\]
so \eqref{eq:val-bound} fails. In this situation, the negative drift in the
valuations of the matrix entries cancels the positive drift coming from
$\chi(g_i)\to +\infty$, and the series defining the module action need not
converge.

Note also that this example does not satisfy any of the following conditions in
Lemma~\ref{lem:novikov-conditions}, and hence falls outside the
range of applicability of Lemma~\ref{lem:nov-mod}.
\end{remark}

\begin{lemma}
\label{lem:novikov-conditions}
Each of the following conditions ensures that the hypotheses of
Lemma~\ref{lem:nov-mod} are satisfied.
\begin{enumerate}[label=\textup{(\alph*)}]
\item \label{c1}
$\upsilon$ is trivial on $\k$.
\item \label{c2}
$\im(\sigma)$ is finite.
\item \label{c3}
$\sigma(G)\subseteq \GL_r(\cO_\k)$, where 
$\cO_\k=\{\lambda\in \k \mid \upsilon(\lambda)\geq 0\}$ is the valuation ring of $\k$.
\end{enumerate}
\end{lemma}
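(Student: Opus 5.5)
The plan is to verify, case by case, the two hypotheses of Lemma~\ref{lem:nov-mod}: the uniform lower bound~\eqref{eq:val-bound} on the valuations of the matrix entries $[\sigma(g)]_{ij}$, and the vanishing $\upsilon(\det(\sigma(g)))=0$ for all $g\in G$.

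\emph{Case~\ref{c1}.} If $\upsilon$ is trivial on $\k$, then every nonzero entry has valuation $0$ and the zero entries have valuation $\infty$. So \eqref{eq:val-bound} holds with $d=0$. Since $\det(\sigma(g))\in\k^\times$, its valuation is $0$.

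\emph{Case~\ref{c2}.} If $\im(\sigma)$ is finite, then only finitely many matrices, hence finitely many entries, occur. Take $d$ to be the minimum of the valuations of the nonzero entries of these matrices; this minimum exists and is finite, and zero entries do no harm. For the determinant, $\det\circ\sigma$ maps $G$ onto a finite subgroup of $\k^\times$. Each $u=\det(\sigma(g))$ then satisfies $u^m=1$ for some $m\ge1$, so $m\,\upsilon(u)=0$ and hence $\upsilon(u)=0$, because the valuation takes values in the torsion-free group $\R$.

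\emph{Case~\ref{c3}.} If $\sigma(G)\subseteq\GL_r(\cO_\k)$, then every entry lies in $\cO_\k$, so \eqref{eq:val-bound} holds with $d=0$. Both $\sigma(g)$ and $\sigma(g)^{-1}=\sigma(g^{-1})$ have entries in $\cO_\k$, so $\det\sigma(g)$ and $\det\sigma(g)^{-1}$ both lie in $\cO_\k$. This gives $\upsilon(\det\sigma(g))\ge 0$ and $-\upsilon(\det\sigma(g))\ge0$, hence equality with $0$. The only point needing care is that we use $\sigma(g^{-1})\in\GL_r(\cO_\k)$, i.e.\ that the inverse matrix is also integral; this is exactly what the membership $\sigma(G)\subseteq\GL_r(\cO_\k)$ provides, since $G$ is closed under inverses.

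None of the three cases presents a real obstacle. The subtlest step is the determinant condition in case~\ref{c3}, which is handled by applying the integrality to $g^{-1}$ as well as to $g$.
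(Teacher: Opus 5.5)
Your proof is correct and follows the paper's argument essentially step for step: in case (a) the trivial valuation gives $d=0$ and determinants of valuation zero; in case (b) the finite image gives a uniform bound $d$ and torsion determinants; in case (c) the inequalities $\upsilon(\det\sigma(g))\ge 0$ and $\upsilon(\det\sigma(g^{-1}))\ge 0$ force equality with $0$. Your only additions are minor elaborations that the paper leaves implicit, such as spelling out $m\,\upsilon(u)=0$ for a torsion element $u$.
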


\begin{proof}
We check that each condition implies the two hypotheses of
Lemma~\ref{lem:nov-mod}.

\medskip
\noindent\textbf{Case \ref{c1}.}
Since $\upsilon$ is trivial on $\k$, we have
$\upsilon(\det(\sigma(g)))=0$ for all $g\in G$. 
Moreover, all nonzero matrix entries satisfy $\upsilon([\sigma(g)]_{ij})\ge 0$,
so the boundedness condition holds.

\medskip
\noindent\textbf{Case \ref{c2}.}
Since $\im(\sigma)$ is finite, all matrix entries lie in a finite subset of $\k$,
hence there exists $d\in\R$ such that $\upsilon([\sigma(g)]_{ij})\ge d$ 
for all $g,i,j$. 
Moreover, each $\sigma(g)$ has finite order, hence so does
$\det(\sigma(g))$, and therefore $\upsilon(\det(\sigma(g)))=0$.
 
\medskip
\noindent\textbf{Case \ref{c3}.}
If $\sigma(G)\subseteq \GL_r(\cO_\k)$, then for all $g$:
\[
\upsilon([\sigma(g)]_{ij}) \ge 0
\quad\text{and}\quad
\det(\sigma(g))\in \cO_\k^\times.
\]
Then  $\upsilon(\det(\sigma(g)))\geq 0$ together with 
$\upsilon(\det(\sigma(g^{-1})))\geq 0$ implies  $\upsilon(\det(\sigma(g)))= 0$.
Thus both hypotheses of Lemma~\ref{lem:nov-mod} are satisfied directly,
with $d=0$.
\end{proof}

\subsection{A key homological criterion}
\label{subsec:sigma-cjls}

Let $X$ be a connected, finite-type CW-complex, and let $G=\pi_1(X)$. 
As before, let $\rho\colon G\to \K^{\times}$ and 
$\sigma\colon G\to \GL_r(\k)$ be two group homomorphisms, 
and set $\chi= r\cdot(\upsilon\circ \rho)\colon G\to \R$.

\begin{theorem}
\label{thm:sigma-cjls}
With notation as above, assume that $\upsilon$ and $\sigma$ satisfy one of the 
conditions in Lemma~\ref{lem:novikov-conditions}.
If $\chi$ is nonzero and $H_i(X;L_{\sigma}\otimes L_\rho)\neq 0$ 
for some $0\leq i \leq q$, then $\chi \not\in \Sigma^q(X;\Z)$.
\end{theorem}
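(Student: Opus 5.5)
We prove the contrapositive. Assume $\chi\in\Sigma^q(X;\Z)$, and show that $H_i(X;L_\sigma\otimes L_\rho)=0$ for all $i\le q$. The argument follows the untwisted proofs of \cite[Thm.~10.1]{PS-plms} and \cite[Thm.~6.4]{Su-mathann}. The one new ingredient is Lemma~\ref{lem:nov-mod}, which gives the coefficient module the structure needed for the Künneth argument.

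\emph{Step 1: the Novikov input.} By Theorem~\ref{thm:bns novikov}, the hypothesis $\chi\in\Sigma^q(X;\Z)$ means that
\[
H_i\bigl(C_*(\wX;\Z)\otimes_{\Z G}\widehat{\Z G}_{-\chi}\bigr)=0 \quad\text{for all } i\le q.
\]
Set $C=C_*(\wX;\Z)$ and $\hat C=C\otimes_{\Z G}\widehat{\Z G}_{-\chi}$. Then $\hat C$ is a complex of free right $\widehat{\Z G}_{-\chi}$-modules, finitely generated in each degree since $X$ is of finite type, and it is acyclic in degrees $\le q$.

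\emph{Step 2: the coefficient module.} By Lemma~\ref{lem:novikov-conditions}, each of the three conditions implies the hypotheses of Lemma~\ref{lem:nov-mod}. That lemma makes $M:=\hat\K^r$ a left $\widehat{\Z G}_{-\chi}$-module via $\sigma\otimes\rho$, and this structure restricts on $\Z G$ to the local system $L_\sigma\otimes L_\rho\otimes_\K\hat\K$. Associativity of tensor products then gives an isomorphism of complexes
\[
\hat C\otimes_{\widehat{\Z G}_{-\chi}} M \;\cong\; C\otimes_{\Z G} M .
\]
The right-hand side computes $H_*(X;L_\sigma\otimes L_\rho)\otimes_\K\hat\K$, since $\hat\K$ is flat over the field $\K$. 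So it suffices to show that $\hat C\otimes M$ has vanishing homology in degrees $\le q$.

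\emph{Step 3: Künneth argument.} Use the Künneth spectral sequence
\[
E^2_{s,t}=\Tor_s^{\widehat{\Z G}_{-\chi}}\bigl(H_t(\hat C),M\bigr)\Longrightarrow H_{s+t}\bigl(\hat C\otimes M\bigr),
\]
which applies because $\hat C$ is a bounded-below complex of free modules. For $s+t\le q$ we have $t\le q$, so $E^2_{s,t}=0$ and the abutment vanishes in degrees $\le q$.

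Alternatively, one can avoid the spectral sequence. A bounded-below free complex that is acyclic through degree $q$ can be split off in low degrees: $\hat C_0\to 0$ is onto $H_0=0$, so the augmentation splits, and inductively each cycle module $Z_i$, $i\le q$, is projective and is a direct summand of $\hat C_i$. Tensoring with $M$ then preserves exactness through degree $q$. This yields $H_i(X;L_\sigma\otimes L_\rho)=0$ for $i\le q$, contradicting the hypothesis.

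\emph{Main obstacle.} The delicate point is Step 2. We must check that the action of Lemma~\ref{lem:nov-mod} is a genuine ring action of $\widehat{\Z G}_{-\chi}$. That is, infinite products must be compatible with convergent sums in $\hat\K$, which uses the uniform valuation bound to justify rearranging double series. We must also check that the sign conventions match: the lemma uses $\chi(g_i)\to+\infty$ for elements of $\widehat{\Z G}_{-\chi}$, consistent with the $-\chi$ in Theorem~\ref{thm:bns novikov}. The hypothesis $\chi\neq0$ ensures that $\chi$ defines a point of $\rS(X)$. Once these checks are done, Steps 1 and 3 are formal.
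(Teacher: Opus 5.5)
Your proposal is correct and follows essentially the same route as the paper. Both arguments use Theorem~\ref{thm:bns novikov} to get acyclicity of $C_*(\wX;\Z)\otimes_{\Z G}\widehat{\Z G}_{-\chi}$ through degree $q$, use Lemma~\ref{lem:nov-mod} to make $\hat\K^r$ a Novikov module, identify the tensored complex with $C_*(\wX;\Z)\otimes_{\Z G}\hat\K^r$, and apply the K\"unneth spectral sequence, with the return from $\hat\K$ to $\K$ by flatness. Your total-degree bookkeeping and the splitting alternative are harmless refinements, and the multiplicativity point you flag is a fair gloss on Lemma~\ref{lem:nov-mod}, whose proof only checks convergence.
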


\begin{proof}
The proof is along the same lines as that of \cite[Thm.~10.1]{PS-plms} 
and \cite[Thm.~6.4]{Su-mathann}; for completeness, we give full details. 

Consider the representation $\sigma\otimes \rho\colon G\to \GL_r(\K)$ 
associated with the local system $L_{\sigma}\otimes L_{\rho}$. 
By Lemma~\ref{lem:nov-mod} and Lemma~\ref{lem:novikov-conditions}, 
the completion $\hat{\K}^r$ is a left $\widehat{\Z{G}}_{-\chi}$-module, which 
we denote by $\hat{\K}^r_{\sigma\otimes\rho}$. Restricting scalars via the 
inclusion  $\Z{G} \inj \widehat{\Z{G}}_{-\chi}$ yields the $\Z{G}$-module 
$\hat{\K}^r_{\iota\circ (\sigma\otimes\rho)}$, defined by 
the representation $\iota\circ (\sigma\otimes\rho)\colon G\to \GL_r(\hat \K)$.

For a ring $R$, a bounded below chain complex of flat right $R$-modules 
$K_{*}$, and a left $R$-module $M$, there is a (right half-plane, boundedly 
converging) K\"{u}nneth spectral sequence, 
\begin{equation}
\label{eq:base-change}
E^2_{ij}=\Tor^R_i(H_j(K),M) \Rightarrow H_{i+j}(K\otimes_R M)\, ,
\end{equation}
see \cite[Thm.~5.6.4]{Weibel}. 
We will apply this spectral sequence to the ring $R=\widehat{\Z{G}}_{-\chi}$, 
the chain complex of free $R$-modules 
$K_*= C_*\big(\widetilde{X};\Z\big) \otimes_{\Z{G}} \widehat{\Z{G}}_{-\chi}$, 
and the $R$-module $M=\hat{\K}^r_{\sigma\otimes\rho}$. 

Assume now that $\rho \in \VV^{\le q}(X,L_{\sigma})$, and suppose $\chi$ 
belongs to $\Sigma^q(X;\Z)$. By Theorem \ref{thm:bns novikov}, this condition  
implies the vanishing of $H_{j}(X; \widehat{\Z{G}}_{-\chi})$ for all $j\le q$;   
that is, $H_{j}(K_*) =0$ for $j\le q$. Therefore, $E^2_{ij}=0$ for $j\le q$. 
Noting that 
\begin{equation}
\label{eq:coefficients}
K_*\otimes_R M= C_*\big(\widetilde{X};\Z\big) \otimes_{\Z{G}} 
\widehat{\Z{G}}_{-\chi} \otimes_{\widehat{\Z{G}}_{-\chi}} \hat{\K}^r_{\sigma\otimes\rho}
=  C_*\big(\widetilde{X};\Z\big) \otimes_{\Z{G}} \hat\K^r_{\iota\circ (\sigma\otimes\rho)}\, ,
\end{equation}
we infer from \eqref{eq:base-change}  that 
$H_{i+j}\big(X;\hat{\K}^r_{\iota\circ (\sigma\otimes\rho)}\big)=0$ for $j\le q$, and so 
$H_{j}\big(X;\hat{\K}^r_{\iota\circ (\sigma\otimes\rho)}\big)=0$ for $j\le q$. From the 
definition of the characteristic varieties (over the field $\hat{\K}$), this is equivalent to   
$\iota\circ \rho \notin \VV^{\le q}(X,L_{\sigma}\otimes_{\K}\hat\K)$. 
It follows that 
$\rho \notin \VV^{\le q}(X,L_{\sigma})$, which contradicts our hypothesis on $\rho$. 
Therefore, $\chi\notin \Sigma^q(X;\Z)$, and we are done.
\end{proof}

\subsection{Main results}
\label{subsec:main}

We are now in a position to prove Theorem~\ref{thm:bns-twisted-trop-intro}
from the Introduction. The key input is the Novikov module structure
established in Lemma~\ref{lem:nov-mod}, together with the valuation
conditions ensured by Lemma~\ref{lem:novikov-conditions}.

\begin{theorem}
\label{thm:bns-twisted-trop}
Let $X$ be a connected finite CW-complex with $\pi_1(X)=G$. 
Fix a field $\k$ with a rational valuation
$\upsilon$ (i.e., $\upsilon(\k^{\times})\subseteq \Q$), possibly trivial. 
Let $\sigma\colon G\to \GL_r(\k)$ be a representation 
and let $L_\sigma$ denote the corresponding local system on $X$. 
Consider the twisted jump loci 
$\VV^{\le q}(X,L_\sigma)\subset H^1(X;\k^{\times})$ 
and let $\Trop_{\k,\upsilon}( \VV^{\le q}(X,L_\sigma))
\subset H^1(X;\R)$ be its tropicalization with respect 
to the valuation $\upsilon$. Assume that $\upsilon$ and $\sigma$ satisfy one of the 
valuation conditions from 
Lemma~\ref{lem:novikov-conditions}.  Then 
\[
\Sigma^q(X;\Z) \subseteq 
\rS\big(\!\Trop_{\k,\upsilon}(\VV^{\le q}(X,L_{\sigma}))\big)^{\compl}.
\]
\end{theorem}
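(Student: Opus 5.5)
We prove the equivalent statement that every nonzero point of the tropical variety, projected to the sphere, lies outside $\Sigma^q(X;\Z)$. Since $\Sigma^q(X;\Z)$ is open in $\rS(X)$ by \cite{FGS}, its complement is closed. It therefore suffices to show that a dense subset of $\rS\big(\Trop_{\k,\upsilon}(\VV^{\le q}(X,L_\sigma))\big)$ lies in $\Sigma^q(X;\Z)^{\compl}$. We work with Definition~\ref{def:tropical-variety}\eqref{trop2}. Take the algebraically closed extension $\K\supseteq\k$ with nontrivial valuation fixed at the start of Section~\ref{sect:twist-bound}. Then $\Trop_{\k,\upsilon}(\VV^{\le q}(X,L_\sigma))$ is the Euclidean closure of the set of points $\upsilon\circ\rho\in\Hom(G,\R)=H^1(X;\R)$, where $\rho$ runs over $\K$-points of the zero set of $\cJ_{\le q}(X,L_\sigma)\otimes_\k\K$. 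When $H_1(X;\Z)$ has torsion, we pass to the components as in the discussion after Theorem~\ref{thm:structure-trop}; torsion characters have valuation zero, so this changes nothing.

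The key step is the following. Let $\rho\colon G\to\K^\times$ be such a point, and suppose $\upsilon\circ\rho\neq0$. The ideal $\cJ_{\le q}$ is formed from the twisted chain complex \eqref{eq:twist-cc}, and its formation commutes with base change to $\K$. Hence $H_i(X;L_\sigma\otimes L_\rho)\neq0$ over $\K$ for some $i\le q$. The hypotheses of Theorem~\ref{thm:sigma-cjls} hold because $\upsilon,\sigma$ satisfy one of the conditions of Lemma~\ref{lem:novikov-conditions}. That theorem then gives $\chi=r(\upsilon\circ\rho)\notin\Sigma^q(X;\Z)$. Since $r>0$, the classes $[\chi]$ and $[\upsilon\circ\rho]$ coincide in $\rS(X)$, so $[\upsilon\circ\rho]\in\Sigma^q(X;\Z)^{\compl}$.

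It remains to pass from the image of the valuation map to its closure, and from there to the sphere. This is the step requiring the most care. Let $w$ be a nonzero point of the tropical variety. By definition, $w$ is a limit of points $\upsilon\circ\rho_n$ with $\rho_n$ in the jump locus. Since $w\neq0$, these points are eventually nonzero, and $[\upsilon\circ\rho_n]\to[w]$ in $\rS(X)$. Each $[\upsilon\circ\rho_n]$ lies in the closed set $\Sigma^q(X;\Z)^{\compl}$, so $[w]$ does too.

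Two subtleties need checking. First, jump loci are defined over the field of coefficients, whereas Theorem~\ref{thm:sigma-cjls} is stated for $\rho$ with values in $\K$, so the jump-locus statement must be read over $\K$. This is exactly the base-change compatibility noted above. Second, the rationality hypothesis on $\upsilon$ is used only to guarantee, via Theorem~\ref{thm:structure-trop}, that the tropical variety is a rational polyhedral complex equal to the closure of the valuation image. If one prefers, one can restrict to the rational points $\upsilon\circ\rho$, which are already dense, and the closedness argument above still concludes.
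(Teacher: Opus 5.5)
Your proof is correct, but it takes a somewhat different route from the paper's.

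\textbf{The paper's route.} It works only with rational points. Since $\upsilon(\K^\times)\subseteq\Q$, each $\upsilon\circ\rho$ is a rational character. The paper then quotes \cite[Thm.~7.11]{Ra12} to lift every nonzero rational point of $\Trop_{\k,\upsilon}(\VV^{\le q}(X,L_\sigma))$ to a $\K$-point $\rho$ of the jump locus, and applies Theorem~\ref{thm:sigma-cjls} to these points. Finally it invokes Theorem~\ref{thm:structure-trop} to know that rational points are dense in the spherical projection, so that closedness of $\Sigma^q(X;\Z)^{\compl}$ finishes the argument.

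\textbf{Your route.} You use the fundamental theorem in the form of Definition~\ref{def:tropical-variety}\eqref{trop2}: the tropical variety is the closure of the valuation image of the $\K$-points. So you apply Theorem~\ref{thm:sigma-cjls} to every $\K$-point directly and pass to limits on the sphere. Neither the lifting of rational tropical points nor the density of rational points is needed.

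\textbf{What your route buys.} It is shorter, and it proves slightly more than you realize. Lemma~\ref{lem:nov-mod}, Theorem~\ref{thm:sigma-cjls}, and the Novikov criterion of \cite{FGS} all hold for arbitrary real characters. The closure description holds for any valuation once $\K$ is algebraically closed with nontrivial valuation. Hence your argument never uses the hypothesis $\upsilon(\k^\times)\subseteq\Q$.

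\textbf{A misattribution.} Your closing remark is slightly off. The equality of the tropical variety with the closure of the valuation image is \cite[Thm.~3.2.3]{MacStu}, not Theorem~\ref{thm:structure-trop}. Rationality only enters if one chooses, as the paper does, to restrict to rational points. The paper's approach keeps everything in terms of rational points of $\rS(X)$, at the cost of quoting Rabinoff's lifting result and the polyhedral structure theorem.
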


\begin{proof}
By the assumption $\upsilon(\k^\times)\subseteq \Q$, we can assume that 
$\upsilon(\K^\times)\subseteq \Q$. Consider a $\K$-valued, multiplicative 
character $\rho\colon \pi_1(X)\to \K^{\times}$. Composing the valuation map 
$\upsilon\colon \K^{\times}\to \Q$ with the homomorphism 
$\rho$, we obtain an additive character, 
$\chi= r\cdot(\upsilon\circ \rho)\colon \pi_1(X) \to \Q$, 
which may be viewed as an element of $H^1(X;\Q)$, that is, 
a rational point in $H^1(X;\R)$. 
Moreover, if $\chi$ is nonzero, then $\chi$ 
determines a rational point on $\rS(X)$. 
Since we work after applying $\rS(-)$, multiplication by the positive 
scalar $r$ does not change the corresponding point of the character sphere.

By the assumption $\upsilon(\k^{\times})\subseteq \Q$ and 
\cite[Thm.~7.11]{Ra12}, any nonzero rational point  
$\chi \in \Trop_{\k,\upsilon}(\VV^{\le q}(X,L_\sigma))$ 
is of the form $r\cdot(\upsilon\circ \rho)$, 
for some $\rho\in \VV^{\le q}(X, L_\sigma)\times_{\k}\K$.
Since  $\upsilon$ and $\sigma$ satisfy one of the 
valuation conditions from 
Lemma~\ref{lem:novikov-conditions}, 
Theorem~\ref{thm:sigma-cjls} implies that $\chi$ 
belongs to $\Sigma^q(X;\Z)^{\compl}$.  
Note that by the assumption that $\upsilon(\k^{\times})\subseteq \Q$ 
and Theorem~\ref{thm:structure-trop}, 
the set of rational points are dense in 
$\rS(\Trop_{\k,\upsilon}(  \VV^{\le q}(X,L_\sigma)))$. Moreover, 
$\Sigma^q(X;\Z)^{\compl}$ is a closed subset of $\rS(X)$.  Thus, 
$\rS(\Trop_{\k,\upsilon}( \VV^{\le q}(X,L_\sigma)))
\subseteq \Sigma^q(X;\Z)^{\compl}$. 
\end{proof}

\begin{corollary}
\label{cor:twist-bound}
Let $X$ be a connected finite CW-complex with $\pi_1(X)=G$. 
For any representation $\sigma\colon G\to \GL_r(\Z)$ with 
$\cJ_{\leq q}(X,L_\sigma)\coloneqq \bigcap_{0\leq i \leq q} \cJ_{ i}(X,L_\sigma)$, 
we have
\[
\Sigma^q(X;\Z) \subseteq 
\rS\big(\!\Trop_\Z( \cJ_{\le q}(X,L_{\sigma}))\big)^{\compl}.
\]
\end{corollary}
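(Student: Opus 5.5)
The plan is to reduce Corollary~\ref{cor:twist-bound} to Theorem~\ref{thm:bns-twisted-trop} by way of Proposition~\ref{prop:three-trop-union-Z}. Set $I=\cJ_{\le q}(X,L_\sigma)\subseteq \Z H$ with $H=\ab(G)$. If $I=\Z H$ the tropical variety is empty and there is nothing to prove, so we assume $I$ is proper. Proposition~\ref{prop:three-trop-union-Z} then writes $\rS(\Trop_\Z(I))$ as a finite union of three kinds of spheres: $\rS(\Trop_{\Q,\upsilon_0}(I\otimes\Q))$, $\rS(\Trop_{\Q,\upsilon_p}(I\otimes\Q))$, and $\rS(\Trop_{\F_p,\hat\upsilon_p}(I\otimes\F_p))$, the last two for $p$ in a finite set $\PP$. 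It therefore suffices to show that each of these pieces lies in $\Sigma^q(X;\Z)^{\compl}$.

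For each piece, let $\k$ be $\Q$ or $\F_p$ with the relevant valuation; in every case $\upsilon(\k^\times)\subseteq\Q$. Let $\sigma_\k\colon G\to\GL_r(\k)$ be the composite of $\sigma$ with $\Z\to\k$. Since $\sigma(G)\subseteq \GL_r(\Z)$, we have $\sigma_\k(G)\subseteq\GL_r(\cO_\k)$: for $\upsilon_0$ and $\hat\upsilon_p$ this is condition~\ref{c1} of Lemma~\ref{lem:novikov-conditions}, and for $\upsilon_p$ it is condition~\ref{c3}, because $\Z\subseteq\Z_{(p)}$ and $\det\sigma(g)=\pm1$. Theorem~\ref{thm:bns-twisted-trop} then gives
\[
\rS\bigl(\Trop_{\k,\upsilon}(\VV^{\le q}(X,L_{\sigma_\k}))\bigr)\subseteq\Sigma^q(X;\Z)^{\compl}.
\]

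The remaining task is to identify $\Trop_{\k,\upsilon}(I\otimes\k)$ with $\Trop_{\k,\upsilon}(\VV^{\le q}(X,L_{\sigma_\k}))$. The chain complex \eqref{eq:twist-cc} over $\Z$, tensored with $\k$, is the chain complex over $\k$ for $\sigma_\k$. Its boundary matrices are therefore the reductions of the integral ones, so the minors generating $\cJ_i$ over $\k$ are the images of those over $\Z$. This gives $\cJ_i(X,L_\sigma)\otimes\k\to\cJ_i(X,L_{\sigma_\k})$ with image generating the target ideal. The delicate point is that $\cJ_{\le q}$ is an intersection, and intersections need not commute with $\otimes\k$; this is the step I expect to be the main obstacle. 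The way around it is to work with radicals and zero sets. The zero set of the ideal generated by the image of $(\bigcap_i\cJ_i)\otimes\k$ contains the union $\bigcup_i V(\cJ_i\otimes\k)$. It is also contained in that union, because $\prod_i\cJ_i\subseteq\bigcap_i\cJ_i$. Both ideals thus have the same radical as $\bigcap_i \cJ_i(X,L_{\sigma_\k})$, whose zero set over $\overline\k$ is $\VV^{\le q}(X,L_{\sigma_\k})$.

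Since tropical varieties depend only on radicals, which is the remark after Definition~\ref{def:tropical-variety}, we get $\Trop_{\k,\upsilon}(I\otimes\k)=\Trop_{\k,\upsilon}(\VV^{\le q}(X,L_{\sigma_\k}))$. Combining this identification with the inclusion from Theorem~\ref{thm:bns-twisted-trop} for each of the finitely many pieces places $\rS(\Trop_\Z(I))$ inside $\Sigma^q(X;\Z)^{\compl}$. Taking complements gives the corollary.
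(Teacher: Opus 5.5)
Your proposal is correct and follows essentially the same route as the paper. Both arguments decompose $\rS(\Trop_\Z(I))$ via Proposition~\ref{prop:three-trop-union-Z}, check condition~\ref{c1} for $\upsilon_0,\hat\upsilon_p$ and condition~\ref{c3} for $\upsilon_p$, and apply Theorem~\ref{thm:bns-twisted-trop} to each piece. Your radical argument via $\prod_i\cJ_i\subseteq\bigcap_i\cJ_i$ is a worthwhile addition. The paper simply asserts $I\otimes\F_p=\cJ_{\le q}(X,L_\sigma\otimes\F_p)$. Over $\Q$ that identity holds by flatness, but over $\F_p$ it need not hold on the nose, since reduction mod $p$ does not commute with intersections. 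It does hold up to radical, and that is all tropicalization needs.
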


\begin{proof}
Set $I=\cJ_{\le q}(X,L_{\sigma})$.    
In each of the three coefficient/valuation settings below considered 
in Proposition~\ref{prop:three-trop-union-Z}, at least one
of the conditions in Lemma~\ref{lem:novikov-conditions} holds:
\begin{itemize}[itemsep=1.5pt]
\item $I\otimes \Q = \cJ_{\le q}(X,L_{\sigma}\otimes \Q)$ with $\upsilon_0$;
\item $I\otimes \Q = \cJ_{\le q}(X,L_{\sigma}\otimes \Q)$ with $\upsilon_p$;
\item $I\otimes \F_p = \cJ_{\le q}(X,L_{\sigma}\otimes \F_p)$ with $\hat{\upsilon}_p$.
\end{itemize}    
    
For $\upsilon_0$ and $\hat{\upsilon}_p$, condition~\ref{c1} holds.

For $\upsilon_p$, condition~\ref{c3} holds since
$\sigma(G)\subseteq \GL_r(\Z)$ implies
$\upsilon_p([\sigma(g)]_{ij})\ge 0$ and
$\det(\sigma(g))=\pm 1$, hence $\upsilon_p(\det(\sigma(g)))=0$.

By Lemma~\ref{lem:novikov-conditions}, the hypotheses of
Lemma~\ref{lem:nov-mod} are satisfied in each case.
Hence Theorem~\ref{thm:bns-twisted-trop} applies, giving
\[
\rS(\Trop_{\k,\upsilon}( I\otimes\k))
\subseteq \Sigma^q(X;\Z)^{\compl}
\]
for each pair $(\k,\upsilon)$ above.
The result follows from Proposition~\ref{prop:three-trop-union-Z}.
\end{proof}

\begin{remark}
\label{rem:integral-vs-finite}
The assumption $\im(\sigma)\subset \GL_r(\Z)$ in 
Corollary~\ref{cor:twist-bound} is weaker than 
requiring $\im(\sigma)$ to be finite: the integrality of 
the matrix entries ensures the valuation conditions from 
Lemma~\ref{lem:novikov-conditions} for all three types of valuations 
simultaneously. However, in the applications to 
$3$-manifold groups (Theorem~\ref{thm:sigma-3mfd}), 
the representations produced by the Friedl--Vidussi theorem 
do have finite image. Whether infinite-image integral representations can 
yield sharper upper bounds for $\Sigma^q(G;\Z)$ than 
finite-image ones is an open problem directly 
relevant to Question~\ref{que:sigma}.
\end{remark}

\subsection{An example}
\label{subsec:ex-bns-twistjump}
We give an example where one  can identify the sets $\Sigma^1(X;\Z)$ by 
means of the twisted jump loci, but not the ordinary jump loci.

\begin{example}
\label{ex:one-rel}
Consider the one-relator group $G=\langle x_1,x_2 \mid R\rangle$, where 
\[
R= x_1^{-1} x_2^{-1} x_1 x_2^2 x_1^{-1} x_2^{-1}
x_1^2 x_2^{-1} x_1^{-1} x_2 x_ 1^{-1} x_2 x_1 x_2^{-1} .
\]
As an application of his algorithm, Brown \cite{Br} showed that 
$\Sigma^1(G)$ consists of two open arcs on the unit circle $S^1=\rS(G)$, 
joining the points $(-1,0)$ to $(0,-1)$ and $ (0,-1)$ to 
$(\sfrac{1}{\sqrt{2}},\sfrac{1}{\sqrt{2}})$, 
respectively; see Figure~\ref{fig:-BNS}. 

Computing Fox derivatives gives 
\begin{align*}
\frac{\partial{R}}{\partial x_1}
&=-x_1^{-1}(1 -x_2^{-1})- [x_1^{-1},x_2^{-1}] x_2 x_1^{-1}(1-x_2^{-1}) + 
[x_1^{-1},x_2^{-1}][x_2,x_1^{-1}](1-x_1x_2^{-1}x_1^{-1})-\\
&\qquad 
[x_1^{-1},x_2^{-1}][x_2,x_1^{-1}][x_1,x_2^{-1}]x_1^{-1}(1-x_2),
\\
\frac{\partial{R}}{\partial x_2}
&=-x_1^{-1}x_2^{-1} (1-x_1) + [x_1^{-1},x_2^{-1}](1-x_2 x_1^{-1}x_2^{-1}) - 
[x_1^{-1},x_2^{-1}][x_2,x_1^{-1}]x_1 x_2^{-1}(1-x_1^{-1}) + \\
&\qquad 
[x_1^{-1},x_2^{-1}][x_2,x_1^{-1}][x_1,x_2^{-1}]x_1^{-1}(1-x_2 x_1 x_2^{-1}).
\end{align*}
Applying the abelianization map $\ab\colon G\to \Z^2$, $x_i\mapsto t_i$ to 
these Fox derivatives yields the matrix
\[
\partial_2^{\ab}= 
 t_1^{-1}t_2^{-1}(t_1-1)\begin{pmatrix}
 t_2-1  & 1-t_1
\end{pmatrix},
\]
from which we get   
$\sqrt{\cJ_{\leqslant 1}(G; \Z)}=(t_1-1)$ and 
$\Trop_\Z( \cJ_{\leqslant 1}(G; \Z))=\{0\}\times \R$.

Consider now the group epimorphism 
\[
\pi\colon G\longsurj S_3=\langle x_1,x_2\mid 
x_1^3=x_2^2=(x_2x_1)^2=1\rangle
\]
and the $2$-dimensional representation $S_3\to \GL_2(\Z)$ given by 
\begin{equation*}
 x_1 \mapsto  \begin{pmatrix}
      -1 & 1 \\
     -1 & 0
 \end{pmatrix} , \quad 
 x_2\mapsto \begin{pmatrix}
     0 & 1 \\
     1 & 0
 \end{pmatrix}.
\end{equation*}
Letting $\sigma\colon G\to \GL_2(\Z)$ be the resulting representation and 
proceeding as before, we find that the map $\partial_2^{\sigma}$ has matrix  
\begin{equation*}
\frac{1}{t_1t_2}
\begin{pmatrix}
(1 + t_2) (-1 + t_1 + 2 t_2)& -t_2 (1 + t_2)& 
1 - t_1^2 - t_2 - 2 t_1 t_2& -t_1 + t_1^2 + 2 t_2 + t_1 t_2
\\
-1 + t_1 + t_2 + t_2^2& 1 - t_1 + t_1 t_2 - 2 t_2^2&
  1 - t_1 - 2 t_2 - t_1 t_2 & (-1 + t_1) (1 + t_1 - t_2)
\end{pmatrix}.
\end{equation*}
Hence, the GCD of the $2\times 2$ minors of this matrix (up to units 
in $\Z[t_1^{\pm 1}, t_2^{\pm 1}]$), is  $ (1 - t_1)^2 - 3 t_2^2$. Hence 
$((1 - t_1)^2 - 3 t_2^2)\subseteq \cJ_{\leq 1}(G,L_\sigma) $

See Figure~\ref{fig:total} below for the various tropicalizations of the principal ideal
$I=((1 - t_1)^2 - 3 t_2^2)$, where Figure~\ref{fig:z-trop} is 
$\Trop_\Z(I)$ and  Figures \ref{fig:trivial-valuation-bis}, 
\ref{fig:mod3-valuation}, and \ref{fig:3adic-valuation} 
are the tropical varieties of $I$ 
considered in $ (\Q[x_1^{\pm 1},x_2^{\pm 1}],\upsilon_0)$, 
$(\mathbb{F}_3[x_1^{\pm 1},x_2^{\pm 1}],\hat{\upsilon}_3)$ and 
$(\Q[x_1^{\pm 1},x_2^{\pm 1}], \upsilon_3)$, respectively. Note that the union of 
Figures \ref{fig:trivial-valuation-bis}, \ref{fig:mod3-valuation}, and 
\ref{fig:3adic-valuation} is different from Figure~\ref{fig:z-trop}, 
while the projections onto the unit sphere are the same. Moreover, we have 
\[
-\Sigma^1(G)=\rS\big(\!\Trop_\Z( I)\big)^{\compl}=
\rS\big(\!\Trop_\Z( \cJ_{\le 1}(X,L_\sigma))\big)^{\compl}.
\]
By contrast, with $\sigma = \id$ (untwisted), one has 
$\Trop_\Z(\cJ_{\le 1}(G;\Z)) = \{0\}\times \R$, 
which gives only the two-point complement $\{(0,\pm 1)\}$ 
on the unit circle---a strictly weaker bound. The twisted variety recovers 
$-\Sigma^1(G)$ exactly.
\end{example}

\begin{figure}[ht!]
    \centering
    \begin{subfigure}[b]{0.33\textwidth}
        \centering
        \begin{tikzpicture}[scale=0.4]
            \draw[-,dashed] (-3,0) -- (3,0);  % x-axis
            \draw[-,dashed] (0,-3) -- (0,3);  % y-axis
            \draw[-,blue,thick] (0,0) -- (0,3.2);  % Vertical blue line
            \draw[-,blue,thick] (0,0) -- (-3,-3);  % Diagonal blue line (negative slope)
            \draw[-,blue,thick] (0,0) -- (3.2,0);  % Horizontal blue line
        \end{tikzpicture}
        \caption{The trivial Trop}
        \label{fig:trivial-valuation-bis}
    \end{subfigure}
    \hfill
    \begin{subfigure}[b]{0.33\textwidth}
        \centering
        \begin{tikzpicture}[scale=0.4]
            \draw[-,dashed] (-3,0) -- (3,0);  % x-axis
            \draw[-,dashed] (0,-3) -- (0,3);  % y-axis
            \draw[-,blue,thick] (0,3) -- (0,-3);  % Diagonal yellow line
        \end{tikzpicture}
        \caption{The mod $3$ Trop.}
        \label{fig:mod3-valuation}
    \end{subfigure}
    \hfill
    \begin{subfigure}[b]{0.23\textwidth}
        \centering
        \begin{tikzpicture}[scale=0.4]
            \draw[-,dashed] (-3,0) -- (3,0);  % x-axis
            \draw[-,dashed] (0,-3) -- (0,3);  % y-axis
            \draw[-,blue,thick] (0,-1) -- (-2,-3);  % Diagonal blue line
            \draw[-,blue,thick] (0,-1) -- (0,3);  % Vertical blue line
            \draw[-,blue,thick] (0,-1) -- (3,-1);  % Horizontal blue line
        \end{tikzpicture}
        \caption{The $3$-adic Trop}
        \label{fig:3adic-valuation}
    \end{subfigure}
     \vspace{0.3cm} % Space between the rows

    \begin{subfigure}[b]{0.32\textwidth}
        \centering
        \begin{tikzpicture}[scale=0.4]
            \draw[-,dashed] (-3.2,0) -- (3,0);  % x-axis
            \draw[-,dashed] (0,-3.2) -- (0,3);  % y-axis
            \draw[-,blue,thick] (0,0) -- (0,3.2);  % Vertical blue line
            \draw[-,blue,thick] (0,0) -- (-3,-3);  % Diagonal blue line (negative slope)
            \draw[-,blue,thick] (0,0) -- (3.2,0);  % Horizontal blue line
            \fill[blue!50] (0,0) -- (3.2,0) -- (3.2,-3.2)-- (0,-3.2) -- (-3.2,-3.2) -- cycle;
        \end{tikzpicture}
        \caption{The $\Z$-Trop.}
        \label{fig:z-trop}
    \end{subfigure}
    \hfill
     \begin{subfigure}[b]{0.32\textwidth}
        \centering
        \begin{tikzpicture}[scale=0.4]
            \draw[-,dashed] (-3,0) -- (3,0);
            \draw[-,dashed] (0,-3) -- (0,3);
            \draw (0,0) circle (2);
            \fill[blue] (0,2) circle (4pt);
            \fill[blue] (2,0) circle (4pt);
            \fill[blue] (-1.414213562, -1.414213562) circle (4pt);
            \draw[blue, thick] 
                (-1.414213562, -1.414213562) arc[start angle=225, end angle=360, radius=2];
        \end{tikzpicture}
        \caption{$\rS(\Trop_\Z)$}
        \label{fig:example2_BNS-bis}
    \end{subfigure}
    \hfill
     \begin{subfigure}[b]{0.32\textwidth}
        \centering
        \begin{tikzpicture}[scale=0.4]
            \draw[-,dashed] (-3,0) -- (3,0);
            \draw[-,dashed] (0,-3) -- (0,3);
            \draw[blue,thick] (0,2) circle (4pt);
            \draw[blue,thick] (2,0) circle (4pt);
            \draw[blue,thick] (-1.414213562, -1.414213562) circle (4pt);
            \draw (0,0) circle (2);
            \draw[blue, thick] 
                (2,0) arc[start angle=0, end angle=225, radius=2];
        \end{tikzpicture}
        \caption{$-\Sigma^1(G)$}
        \label{fig:-BNS}
    \end{subfigure}
\caption{Tropical varieties and $\Sigma$-invariant of the 
group $G$ from Example~\ref{ex:one-rel}}
\label{fig:total}
\end{figure}

\section{K\"{a}hler groups}
\label{sect:Kahler}

In this section we apply the general results of the 
preceding sections to compact K\"{a}hler manifolds and 
their fundamental groups. A finitely presented group $G$ 
is called a \emph{K\"{a}hler group} if it is isomorphic 
to $\pi_1(X)$ for some compact K\"{a}hler manifold $X$.

The key feature of K\"{a}hler groups that we exploit 
is that both their first characteristic variety $\VV^1(X,\C)$ 
and their BNS invariant $\Sigma^1(G)$ are completely 
determined by the \emph{hyperbolic orbifold fibrations} 
of $X$ — that is, surjective holomorphic maps from $X$ 
onto orbifold curves with negative orbifold Euler 
characteristic. These fibrations are finite in number 
up to equivalence \cite[Thm.~2]{De08}, and their 
cohomological footprints tile both invariants, with 
one subtle difference: certain genus-$1$ fibrations 
with some multiple fibers contribute to 
$\Sigma^1(G)^{\compl}$ but are invisible in $\VV^1(X,\C)$.

In \S\ref{subsec:orbifolds} we set up the algebraic 
foundations, computing the characteristic varieties 
of compact orbifold groups $\pi_1^{\orb}(C_g,\bm{\mu})$ 
via Fox calculus. This includes both the twisted version 
needed for Alexander polynomial computations and the 
untwisted formula \eqref{eq:v1piorb}, which feeds into 
the structure theorem for $\VV^1(X,\C)$.
In \S\ref{subsc:cv-bns-kahler} we state and compare 
the two main structure theorems: Theorem~\ref{thm:cv1-kahler} 
for $\VV^1(X,\C)$, due to Beauville, Arapura, Campana and 
others, and Theorem~\ref{thm Delzant} for $\Sigma^1(G)$, 
due to Delzant. We also derive a corollary 
(Corollary~\ref{cor:bns-kahler}) relating the tropicalization 
of twisted characteristic varieties to the BNS invariant. 
In \S\ref{subsec:twist-alex-kahler} we prove 
Theorem~\ref{thm:alexpoly-kahler}, one of the main results 
of the paper: the first twisted Alexander polynomial 
$\Delta^\sigma(X)$ of a compact K\"{a}hler manifold is 
always $0$ or $1$, providing a new obstruction to 
K\"{a}hlerianity that goes beyond the untwisted DPS 
obstruction \cite{DPS-imrn}. In \S\ref{subsec:examples}, we give some 
examples where Theorem~\ref{thm:intro-kahler} detects non-K\"ahlerianity
and the untwisted result does not.
\subsection{Compact orbifold groups}
\label{subsec:orbifolds}
Let $C_{g}$ be a compact Riemann surface of genus 
$g\geqslant 1$ and let $s\geqslant 0$ be an integer. If $s>0$, fix points 
$\{q_1,\ldots,q_s\}$ in $C_g$ and  assign to these points an integer weight 
vector ${\bm \mu}=(\mu_1,\cdots,\mu_s)$ with all $\mu_i\geqslant 2$.
The orbifold fundamental group $\Gamma=\pi_1^{\orb}(C_{g},{\bm \mu})$ 
associated to these data has presentation  
\[ 
\Gamma=\Bigl\langle x_{1},\ldots,x_{g},y_{1},\ldots,y_{g},z_{1},\ldots,z_{s} \bigm| \prod_{i=1}^{g}
[x_{i},y_{i}]\cdot\prod_{j=1}^{s} z_{j}=1, \ z_{j}^{\mu_{j}}=1 \text{ for all } 1\leqslant 
j\leqslant s   \Bigr\rangle. 
\]
Next we compute the first twisted Alexander polynomial of a compact orbifold group.

\begin{proposition}
\label{prop:compact-orbifold-group}
Let $\Gamma=\pi_1^{\orb}(C_g,\bm{\mu})$, and fix an algebraically
closed field $\k$. Let $\sigma \colon \Gamma \to \GL_r(\k)$ 
be a representation, sending generators $x_i,y_i,z_j$ to matrices
$X_i,Y_i,Z_j$, respectively, such that
\[
\prod_{i=1}^{g}[X_i,Y_i]\cdot \prod_{j=1}^{s} Z_j = I_r,
\qquad
Z_j^{\mu_j}=I_r \quad (1\le j\le s).
\]

Then the following hold:
\begin{enumerate}
\item If $g\ge 2$, or if the matrix $1+Z_j+\cdots+Z_j^{\mu_j-1}$ 
is not invertible for some $j$, then $\Delta^\sigma(\Gamma)=0$.

\item If $g=1$ and the matrices $1+Z_j+\cdots+Z_j^{\mu_j-1}$ 
are invertible for all $1\le j\le s$, then $\Delta^\sigma(\Gamma)=1$.
\end{enumerate}

In particular, the invertibility condition holds whenever
$Z_j=I_r$ and $\ch(\k)\nmid \mu_j$.
\end{proposition}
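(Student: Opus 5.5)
The plan is to compute everything from the Fox-calculus presentation of the twisted chain complex of the presentation $2$-complex of $\Gamma$, with $\phi$ the projection onto the maximal free abelian quotient. I will compare the $\k(A)$-ranks of the boundary maps, and then use Proposition~\ref{prop:jump-loci-and-Alexander} together with Remark~\ref{rem:fox-vs-invariant} to pass from the Fox matrix to $\Delta^\sigma(\Gamma)$.

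\textbf{Setup.} Since each $z_j$ has finite order in $\Gamma_{\ab}$, we have $A=\Z^{2g}$, generated by the images $t_i,u_i$ of $x_i,y_i$, and $\phi(z_j)=1$. The complex is
\[
\k[A]^{r(1+s)}\xrightarrow{\partial_2^\sigma}\k[A]^{r(2g+s)}\xrightarrow{\partial_1^\sigma}\k[A]^r .
\]
\begin{itemize}
\item The relator $z_j^{\mu_j}$ contributes an $r$-row block. Its only nonzero entry is in the $z_j$-column, where it equals the \emph{constant} matrix $N_j\coloneqq 1+Z_j+\cdots+Z_j^{\mu_j-1}$, because $\phi(z_j)=1$.
\item The long relator $R$ contributes a block row. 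In the $x_i$- and $y_i$-columns its entries have the form $P_i(I-u_iX_iY_iX_i^{-1})$ and $P_i(t_iX_i-X_iY_iX_i^{-1}Y_i^{-1})$, where $P_i$ is the image of the prefix (a monomial times an invertible matrix). In the $z_j$-columns it has invertible entries.
\item The column $\partial_1^\sigma$ has blocks $t_iX_i-I$, $u_iY_i-I$, $Z_j-I$. Since $\det(t_1X_1-I)\neq 0$ when $g\ge 1$, the map $\partial_1^\sigma$ has full rank $r$ over $\k(A)$.
\end{itemize}

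\textbf{Case (1).} Over $\k(A)$ we get
\[
\rank H_1^{\phi,\sigma}=r(2g+s)-r-\rank\partial_2^\sigma .
\]
Since $\partial_2^\sigma$ has $r(1+s)$ rows, $\rank H_1^{\phi,\sigma}\ge r(2g-2)$, which is positive for $g\ge2$. If some $N_j$ is singular, the $j$-th block contributes row rank $\rank N_j<r$. Then $\rank\partial_2^\sigma\le r+\sum_j\rank N_j<r(1+s)$, so $\rank H_1^{\phi,\sigma}\ge r(2g-1)-r+1>0$ when $g=1$. In either case $H_1^{\phi,\sigma}$ is not torsion, so $\Fitt_0=0$ and $\Delta^\sigma(\Gamma)=0$.

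\textbf{Case (2).} Now $g=1$ and every $N_j$ is invertible. First, row operations with the invertible blocks $N_j$ clear the $z_j$-columns of the $R$-row. This shows that $\rank\partial_2^\sigma=r(1+s)$, so $H_1^{\phi,\sigma}$ is torsion. It also shows that the twisted Alexander module $A^\sigma_\phi$ is presented by the single $r\times 2r$ block
\[
\bigl(\,I-uXYX^{-1}\;\big|\;tX-XYX^{-1}Y^{-1}\,\bigr),
\]
up to invertible factors. Among its $r\times r$ minors are:
\begin{itemize}
\item $\det(I-uXYX^{-1})$, a nonzero polynomial in $u$ alone;
\item $\det(tX-XYX^{-1}Y^{-1})$, a nonzero polynomial in $t$ alone, with leading coefficient $\det X$.
\end{itemize}
The zero sets of these two minors are finite unions of lines $u=c$ and $t=c'$, respectively. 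Hence $V(\Fitt_r(A^\sigma_\phi))$ is finite.

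By Remark~\ref{rem:fox-vs-invariant}, $V(\Fitt_0(H_1^{\phi,\sigma}))$ agrees with $V(\Fitt_r(A^\sigma_\phi))$ outside finitely many characters, so it is also finite. Since $\rank A=2$, Proposition~\ref{prop:jump-loci-and-Alexander}\eqref{ja1} then shows that $\Delta^\sigma(\Gamma)$ has no zero locus of codimension one, so it is a unit, i.e.\ $\Delta^\sigma(\Gamma)=1$.

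\textbf{Final claim.} If $Z_j=I_r$, then $N_j=\mu_jI_r$, which is invertible exactly when $\ch(\k)\nmid\mu_j$.

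\textbf{Main obstacle.} I expect the hardest part to be the Fox-derivative bookkeeping for $R$. One must check that the prefixes and the $z_j$-entries really are invertible matrices times monomials, and that the two displayed minors are genuinely nonzero. If the exact form of the entries turns out messier, the fallback is to specialize at generic points: fix a generic $t$ and show that the $u$-dependence alone forces a nonvanishing minor, and symmetrically.
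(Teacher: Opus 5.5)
Your proposal is correct and is essentially the paper's argument: you use the same block form of the twisted Fox matrix and the same rank count $r(2g-2+s)-\sum_j\rank N_j$, and for $g=1$ your two minors $\det(I-uXYX^{-1})$ and $\det(tX-[X,Y])$ are exactly the paper's eigenvalue conditions. The only difference is packaging: you work generically over $\k(A)$ with Fitting ideals of the reduced Alexander module (via Remark~\ref{rem:fox-vs-invariant}), whereas the paper specializes at individual characters $\rho\in\Hom^0(\Gamma,\k^\times)$ and reads off $\Delta^\sigma$ from $\VV^1$ via Proposition~\ref{prop:jump-loci-and-Alexander}.
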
    

\begin{proof}
Consider the compact orbifold group $\Gamma=\pi_1^{\orb}(C_{g},{\bm \mu})$ 
as above, and denote by $X=K(\Gamma,1)$ its classifying space. By Fox calculus, 
the boundary map $\partial_2$ in $C_*(\widetilde{X};\k)$ is given by the 
$(2g+s)\times (s+1)$ matrix 
\[ 
\begin{pmatrix} 
\label{matrix compact} 
\frac{z_{1}^{\mu_{1}}-1}{z_{1}-1} & 0 & \cdots & 0 & (\prod_{j=1}^s z_j)^{-1} \\
0 & \frac{z_{2}^{\mu_{2}}-1}{z_{2}-1} & \cdots & 0 & (\prod_{j=2}^s z_j)^{-1} \\
\vdots & \vdots & \vdots & \vdots & \vdots \\
0 & 0 & \cdots & \frac{z_{s}^{\mu_{s}}-1}{z_{s}-1} &  z_s^{-1} \\
0& 0 & \cdots & 0 & 1-x_1y_1x_1^{-1} \\
0& 0 & \cdots & 0 & x_1-[x_{1},y_{1}] \\
0& 0 & \cdots & 0 & [x_1,y_1] (1-x_2y_2x_2^{-1}) \\
0& 0 & \cdots & 0 & [x_1,y_1] (x_2-[x_{2},y_{2}]) \\
\vdots& \vdots & \vdots & \vdots & \vdots \\
0& 0 & \cdots & 0 & \prod_{k=1}^{g-1}[x_k,y_k] (1-x_g y_g x_g^{-1}) \\
0& 0 & \cdots & 0 & \prod_{k=1}^{g-1}[x_k,y_k] (x_g-[x_{g},y_{g}]) \\
\end{pmatrix}. 
\]
Here we adopt the notation $\frac{z_{j}^{\mu_{j}}-1}{z_{j}-1}\coloneqq 
1+z_j+\cdots+z_j^{\mu_j-1}$.

Consider a character $\rho\in \Hom^0(\Gamma, \k^{\times})$, 
the connected component of $\Hom(\Gamma,\k^{\times})$ 
containing the constant sheaf, which sends $x_i\to \rho_i$, 
$y_i\to \rho_{g+i}$, and $z_j\to 1$. 
Then $\partial_2^{\sigma \otimes \rho} $ has the following 
$(2g+s)\times (s+1)$ matrix:
\[ 
\begin{pmatrix} 
\label{matrix compact bis} 
\frac{ Z_{1}^{\mu_{1}}-I_r}{ Z_{1}-I_r} & 0 & \cdots & 0 & (\prod_{j=1}^s  Z_j)^{-1} \\
0 & \frac{ Z_{2}^{\mu_{2}}-I_r}{ Z_{2}-I_r} & \cdots & 0 & (\prod_{j=2}^s  Z_j)^{-1} \\
\vdots & \vdots & \vdots & \vdots & \vdots \\
0 & 0 & \cdots & \frac{Z_{s}^{\mu_{s}}-I_r}{ Z_{s}-I_r} & Z_s^{-1} \\
0& 0 & \cdots & 0 & I_r-\rho_{g+1}\cdot X_1Y_1X_1^{-1} \\
0& 0 & \cdots & 0 & \rho_1\cdot X_1-[X_{1},Y_{1}] \\
0& 0 & \cdots & 0 & [X_1,Y_1] (I_r-\rho_{g+2}\cdot X_2Y_2X_2^{-1}) \\
0& 0 & \cdots & 0 & [X_1,Y_1] (\rho_2\cdot X_2-[X_{2},Y_{2}]) \\
\vdots& \vdots & \vdots & \vdots & \vdots \\
0& 0 & \cdots & 0 & \prod_{k=1}^{g-1}[X_k,Y_k] (I_r-\rho_{2g}\cdot X_g Y_g X_g^{-1}) \\
0& 0 & \cdots & 0 & \prod_{k=1}^{g-1}[X_k,Y_k] (\rho_g\cdot X_g-[X_{g},Y_{g}])
\end{pmatrix},
\]
where $\frac{Z_{j}^{\mu_{j}}-I_r}{Z_{j}-I_r}\coloneqq 1+Z_j+\cdots+Z_j^{\mu_j-1}$.

Assume that for some $1\leq i\leq g$, one of the following holds:
\begin{itemize}[itemsep=2pt]
\item $\rho_i$ is not an eigenvalue of $[X_i,Y_i]X_i^{-1}$ and $X_i^{-1}$; or 
\item   $\rho_{g+i}^{-1}$ is not an eigenvalue 
of $X_iY_iX_i^{-1}$ and $Y_i$.
\end{itemize}
Then
\[
\rank \partial_2^{\sigma \otimes \rho} = r+ \sum_{j=1}^s 
\rank \tfrac{Z_{j}^{\mu_{j}}-I_r}{ Z_{j}-I_r}
\]
and $\rank \partial_1^{\sigma \otimes \rho} =r$. 
 Putting all together, we get that 
\begin{align*}
    \dim H_1(\Gamma; L_\sigma\otimes L_\rho)& 
    =r(2g+s)-\rank \partial_1^{\sigma \otimes \rho} 
    -\rank \partial_2^{\sigma \otimes \rho}\\
    &=r(2g-2+s)-\sum_{j=1}^s  \rank  \tfrac{Z_{j}^{\mu_{j}}-I_r}{ Z_{j}-I_r}.
\end{align*} 
 Hence if either $g\geq 2 $ or $ \frac{Z_{j}^{\mu_{j}}-I_r}{Z_{j}-I_r}$ is 
 not of full rank for some $j$, then 
 \[
 \VV_1(\Gamma,L_\sigma)\cap \Hom^0(\Gamma,\k^{\times})= 
 \Hom^0(\Gamma,\k^{\times}).
\]
In particular, the first Alexander polynomial $\Delta^\sigma(\Gamma)=0$ in this case.
On the other hand, if $g=1$ and $ \frac{Z_{j}^{\mu_{j}}-I_r}{ Z_{j}-I_r}$ 
has full rank  for all $1\leq j \leq s$, then 
$\VV_1(\Gamma,L_\sigma)\cap \Hom^0(\Gamma,\k^{\times})$ consists 
of only finitely many points. In particular, the first Alexander polynomial 
$\Delta^\sigma(\Gamma)=1$ in this case.
\end{proof}

\begin{remark}
\label{rem:v1-gamma}
As a special case of the above computation (taking $r=1$ 
and $\sigma = \id$), we obtain an explicit description of 
the first characteristic variety of the compact orbifold group. 
Identify $\widehat{\Gamma} = \widehat{\Gamma}^{0} \times \widehat{A}$, 
where $\widehat{\Gamma}^{\circ} \cong (\C^{\times})^{2g}$ 
is the identity component and $A = \Tors(\Gamma_{\ab})$ 
is isomorphic to $\Z_{\mu_1}\oplus \cdots \oplus \Z_{\mu_s}/\langle(1,\dots ,1)\rangle$. 
Then
\begin{equation}
\label{eq:v1piorb}
\VV^1(\Gamma,\C)=\begin{cases}
\widehat{\Gamma} & \text{if $g\geq 2$}, \\[2pt]
\big(\widehat{\Gamma}\setminus\widehat{\Gamma}^{0}\big) \cup \{\bo\} 
& \text{if $g=1$ and $\abs{A}=\dfrac{\prod_{j=1}^s \mu_j}{\lcm(\mu_1,\ldots,\mu_s)} >1$}, \\[2pt]
\{\bo\} & \text{otherwise},
\end{cases}
\end{equation}
see \cite[Prop.~3.11]{ACM} and \cite[\S 9.1]{Su-imrn}.  
In particular, $\VV^1(\Gamma,\C)$ has a positive-dimensional 
component through $1$ if and only if $g \geq 2$, and has 
torsion-translated positive-dimensional components if and 
only if $\abs{A} > 1$. 
\end{remark}

\subsection{Characteristic varieties and BNS invariants 
of K\"{a}hler manifolds}
\label{subsc:cv-bns-kahler}

Let $X$ be a compact K\"{a}hler manifold; its fundamental 
group $G = \pi_1(X)$ is called a \emph{K\"{a}hler group}. 
The structure of both $\VV^1(X,\C)$ and $\Sigma^1(G)$ is 
controlled by the orbifold fibrations of $X$, as described 
in the two theorems below.

\begin{definition}
\label{def:orbi-fibration}
Let $X$ be a compact K\"ahler manifold with $\pi_1(X)=G$. A holomorphic map 
$f\colon X \to C_{g}$ is called an orbifold fibration if $f$ is surjective with connected 
fibers onto a Riemann surface $C_g$ with genus $g\geqslant 1$. Assume that $f$ 
has multiple fibers over the points  $\{q_1,\ldots,q_s\}$ in $C_g$ and let $\mu_j$ 
denote the multiplicity of the multiple fiber $f^{*}(q_j)$ (the $\gcd$ of the coefficients 
of the divisor $f^* q_j$). 
Such an orbifold fibration is denoted by $f\colon X \to (C_{g},{\bm \mu})$, 
where ${\bm \mu}=(\mu_1,\ldots,\mu_s)$. It is called {\em hyperbolic} if 
the orbifold Euler characteristic of the base curve, 
\[
\chi(C_{g},{\bm \mu}) \coloneqq 2-2g- \sum_{j=1}^s \Bigl(1-\frac{1}{\mu_j}\Bigr)
\]
is strictly negative.
\end{definition}

Two orbifold fibrations $f\colon X \to (C_{g},{\bm \mu})$ and 
$f'\colon X \to (C_{g'},{\bm \mu}')$  are equivalent
if there is a biholomorphic map $h\colon C_g \to C_{g'}$ which 
sends marked points to marked
points while preserving multiplicities. A compact K\"ahler
manifold $X$ admits only finitely many equivalence classes 
of hyperbolic orbifold fibrations, see e.g.~\cite[Thm.~2]{De08}.

The structure of $\VV^1(X,\C)$ for a compact K\"{a}hler 
manifold $X$ is described by the following result, which 
predates and is used in the proof of Theorem~\ref{thm Delzant}.

\begin{theorem}[\cite{Be92, Arapura, Cm01, Di07, De08, Cm11, ACM,DPS-duke, GL91}]
\label{thm:cv1-kahler}
Let $X$ be a compact K\"{a}hler manifold. Then
\begin{equation}
\label{eq:cv1-kahler}
\VV^1(X,\C) = \bigcup_{f} f^\#\big(\VV^1(\pi_1^{\orb}
(C_{g_f}, \bm{\mu}_f),\C)\big) \cup Z,
\end{equation}
where $Z$ is a finite set of torsion characters, and the 
union runs over the finite set of equivalence classes of 
orbifold fibrations $f\colon X\to (C_{g_f},\bm{\mu}_f)$ 
satisfying either $g_f\geq 2$, or $g_f=1$ and $\abs{A_f} >1$ 
(where $A_f$ denotes the torsion part of the abelianization 
of $\pi_1^{\orb}(C_{g_f}, \bm{\mu}_f)$).
\end{theorem}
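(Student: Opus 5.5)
The structure theorem for $\VV^1(X,\C)$ is cited from the literature, so I would assemble a proof from its standard ingredients rather than derive it from scratch.

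\emph{Step 1: reduce to a statement about positive-dimensional components.} By the theorem of Arapura~\cite{Arapura}, building on Beauville~\cite{Be92} and Green--Lazarsfeld~\cite{GL91}, every irreducible component of $\VV^1(X,\C)$ is a torsion-translated subtorus. The isolated points are torsion characters; collect them in the finite set $Z$. It remains to show that each positive-dimensional component $W=\tau\cdot T$ is of the form $f^{\#}$ of a component of $\VV^1(\pi_1^{\orb}(C_{g_f},\bm\mu_f),\C)$ for some orbifold fibration $f$. Conversely, each such pullback must lie in $\VV^1(X,\C)$.

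\emph{Step 2: the reverse inclusion.} Let $f\colon X\to (C_g,\bm\mu)$ be an orbifold fibration. The induced map $f_\sharp\colon \pi_1(X)\to \pi_1^{\orb}(C_g,\bm\mu)$ is surjective because the fibres are connected and the multiplicities are the gcd's. Surjectivity gives the injection $H^1(\Gamma;L_\rho)\inj H^1(X;f^*L_\rho)$. By universal coefficients this means $f^{\#}(\VV^1(\Gamma,\C))\subseteq\VV^1(X,\C)$ away from the trivial character, which lies in both sets anyway.

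\emph{Step 3: the forward inclusion.} This is the hard direction. For a component $W$ through $\bo$ of dimension $\ge 1$, I would use Green--Lazarsfeld's holomorphic $1$-forms. The tangent space $T_{\bo}W\subset H^1(X,\C)$ splits as $V\oplus\bar V$ with $V\subset H^0(\Omega^1_X)$ isotropic under the wedge product, and $\dim V\ge 2$. The Castelnuovo--de~Franchis theorem then produces a fibration $f\colon X\to C_g$ with $g\ge 2$ and $V\subseteq f^*H^0(\Omega^1_{C_g})$. By Step 2 and Remark~\ref{rem:v1-gamma}, $f^\#\widehat\Gamma$ is a subtorus in $\VV^1(X,\C)$ of dimension $2g$, so maximality of $W$ forces equality.

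For translated components $\tau T$ with $\tau\notin T$, I would follow Beauville and Campana~\cite{Be92,Cm01}. One passes to a finite cover on which $\tau$ becomes trivial, applies the previous case there, and descends the fibration. The descent is what produces multiple fibres and hence orbifold bases: either $g\ge2$, or $g=1$ with $\abs{A_f}>1$, where $\tau$ is a nontrivial torsion character of $A_f$ pulled back. The case $g=1$ with $A_f$ trivial contributes nothing, by the third case of \eqref{eq:v1piorb}.

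\emph{Main obstacle.} I expect the descent argument for translated components, and the identification of the resulting multiplicities with $\bm\mu_f$, to be the crux. This is where the cited works of Arapura, Campana, Dimca and Artal--Cogolludo--Matei~\cite{Arapura,Cm01,Di07,ACM} do the real work, so I would invoke them rather than reprove the descent.

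Finiteness of the union follows from finiteness of equivalence classes of such fibrations, which is \cite[Thm.~2]{De08} for the hyperbolic case. The remaining elliptic case with $\abs{A_f}>1$ is handled by noting that each such fibration is determined by its positive-dimensional component, and $\VV^1(X,\C)$ has only finitely many components.
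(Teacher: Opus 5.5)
Your plan is consistent with the paper, which gives no proof of this theorem and simply quotes it from the cited literature. Your ingredients (the easy inclusion via surjectivity of $f_{\#}$, the Green--Lazarsfeld/Castelnuovo--de~Franchis argument for components through $\bo$, and deferring the descent for translated components to \cite{Be92,Cm01,Di07,ACM}) are the standard route through those references. Two small corrections: Arapura's theorem by itself only gives the isolated points as unitary characters, and their being torsion, as the statement requires of $Z$, comes from later work (Simpson in the projective case, Campana \cite{Cm01} in the K\"ahler case); moreover, a genus-one orbifold fibration with $\abs{A_f}>1$ has at least two multiple fibres, hence $\chi(C_1,\bm\mu)<0$, so it is already hyperbolic and \cite[Thm.~2]{De08} gives finiteness of the whole union without your separate argument.
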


Using Simpson's results \cite[Thm.~1]{Sim}, Delzant gave a complete 
description of $\Sigma^1(G)$ for $G$ a K\"ahler group in \cite[Thm.~1.1]{De10}. 

\begin{theorem}[{\cite[Thm.~1.1]{De10}}] 
\label{thm Delzant} 
Let $X$ be a compact K\"ahler manifold with $\pi_1(X)=G$. Then we have
\[
\Sigma^1(G;\Z) = \rS \bigg(\!\bigcup_f \im(f^{*}\colon 
H^1(C_g;\R)\longrightarrow  H^1(X;\R) ) \bigg)^{\compl},
\]
where the union runs over all equivalence classes of 
hyperbolic orbifold fibrations 
$f\colon X\to C_g$. In particular, it is a finite union. 
Moreover, $\Sigma^1(G;\Z)=\emptyset$ if and only if there exists a 
hyperbolic orbifold fibration $f\colon X \to C_g$ such that 
$f^*\colon H^1(C_g;\R)\to H^1(X;\R)$ is an isomorphism.
\end{theorem}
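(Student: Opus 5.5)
The plan is to prove the two inclusions separately; throughout I use that the right-hand side is antipodally symmetric (a union of linear subspaces), so the sign convention $\Sigma^1(G)=-\Sigma^1(G;\Z)$ plays no role.

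\emph{Easy inclusion: images of fibrations lie in the complement.} Let $f\colon X\to (C_g,\bm\mu)$ be a hyperbolic orbifold fibration. Since $f$ has connected fibers and the multiple fibers are accounted for by $\bm\mu$, the induced map $f_\#\colon G\to \Gamma=\pi_1^{\orb}(C_g,\bm\mu)$ is surjective. I will show $\Sigma^1(\Gamma)=\emptyset$.
\begin{enumerate}
\item Because $\chi(C_g,\bm\mu)<0$, the group $\Gamma$ is a cocompact Fuchsian group.
\item By Selberg's lemma, $\Gamma$ has a finite-index torsion-free subgroup $\Gamma'$, which is a closed surface group of genus at least $2$.
\item For such a surface group every nonzero character has infinitely generated kernel (a normal subgroup of infinite index in a surface group of genus at least $2$ is infinitely generated). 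Hence $\Sigma^1(\Gamma')=\emptyset$.
\item The BNS invariant of a group and of a finite-index subgroup are related by restriction: $\chi\in\Sigma^1(\Gamma)$ if and only if $\chi|_{\Gamma'}\in\Sigma^1(\Gamma')$. Therefore $\Sigma^1(\Gamma)=\emptyset$.
\end{enumerate}
Proposition~\ref{prop:sigma1-nat} then gives $\rS(\im f^*)\subseteq\Sigma^1(G)^{\compl}$. Taking the union over all $f$ yields one inclusion.

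\emph{Hard inclusion: every class in the complement comes from a fibration.} Let $\chi\in\Sigma^1(G)^{\compl}$. The union of the finitely many subspaces $\im f^*$ is closed, and $\Sigma^1$ is open. I would first treat rational (discrete) $\chi$, and extend to general $\chi$ via the Brown/Bieri–Neumann–Strebel description, which covers all characters. The steps are:
\begin{enumerate}
\item Use Brown's characterization of $\Sigma^1$: $\chi\notin\Sigma^1(G)$ iff $G$ admits a nonabelian action on an $\R$-tree $T$ without fixed ends except one, whose associated character is $\chi$.
\item Apply the Gromov–Schoen / Korevaar–Schoen theory to obtain a $G$-equivariant pluriharmonic map $\widetilde X\to T$.
\item Its Hopf differential yields a holomorphic $1$-form (or a multivalued one) whose real part represents $\chi$.
\item Show that the resulting foliation has compact leaves, via a Castelnuovo–de Franchis or Simpson type argument (Simpson, \cite[Thm.~1]{Sim}, for the nonabelian/rank-one case). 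This produces a holomorphic map $X\to C$ to a curve through which the form factors.
\item After Stein factorization, record the multiple fibers to obtain an orbifold fibration $f$ with $\chi\in\im f^*$.
\item Check hyperbolicity. The induced action of $\pi_1^{\orb}(C,\bm\mu)$ on $T$ is nonabelian, which excludes Euclidean and spherical orbifolds, so $\chi(C,\bm\mu)<0$.
\end{enumerate}
Finiteness of the union follows from \cite[Thm.~2]{De08}.

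\emph{The final assertion.} If $f^*$ is an isomorphism for some $f$, the first part gives $\Sigma^1(G;\Z)=\emptyset$. Conversely, if $\Sigma^1(G;\Z)=\emptyset$, then the whole sphere is covered by finitely many subspaces $\im f^*$. A finite union of proper subspaces cannot cover a vector space, so some $\im f^*$ is everything. Since $f^*$ is injective for surjective $f$, that $f^*$ is an isomorphism.

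\emph{Main obstacle.} I expect the hard step to be producing the fibration from a non-$\Sigma^1$ class: the harmonic map to a tree and the factorization step. This is exactly where Simpson's work and Delzant's argument are essential, and the rest of the proof is formal by comparison.
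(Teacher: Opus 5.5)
The paper does not reprove this statement. It imports it from \cite{De10}, whose proof rests on Simpson's theorem \cite[Thm.~1]{Sim}, so the comparison is with that argument. Your easy inclusion and your deduction of the final assertion are essentially fine, but the hard inclusion has a genuine gap.

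\textbf{Step (2) fails as stated.} Brown's criterion gives you an action on an $\R$-tree \emph{with} a fixed end. That is exactly the case excluded by the Gromov--Schoen and Korevaar--Schoen existence theorems, which require the action to fix no end. Along a ray converging to the fixed end, the displacement of each generator eventually equals $|\chi(g)|$, so the displacement function is not proper and energy-minimizing sequences can escape to infinity.

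\textbf{Steps (4) and (6) are not arguments.} Even granting a map, step (4) restates the theorem rather than proving it. Nothing links $\chi\notin\Sigma^1(G)$ to compactness of leaves. Leaves of a holomorphic $1$-form are in general not compact, and Castelnuovo--de Franchis needs two independent forms with vanishing wedge product. Step (6) assumes the tree action descends to $\pi_1^{\orb}(C,\bm{\mu})$, which you have not shown.

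\textbf{The missing idea is that trees are not needed at all.} By Hodge theory, $\chi$ is the class of $\mathrm{Re}\,\omega$ for a holomorphic $1$-form $\omega$. On the cover $\hat X\to X$ associated to $\ker\chi$, write $\mathrm{Re}\,\omega=dh$. The Bieri--Neumann--Strebel connectivity criterion then says that connectedness of all half-spaces $\{h\ge c\}$ forces $\chi\in\Sigma^1$, up to the sign convention. Simpson's theorem, in the form used by Delzant, supplies the dichotomy. Either the level sets of $h$ are connected, which easily gives connected half-spaces and hence $\chi\in\Sigma^1$. Or $\omega=f^*\beta$ for an orbifold fibration $f\colon X\to(C,\bm{\mu})$.

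In the second case $\ker f_\#$ is finitely generated \cite[Lem.~3]{CKO}. Hence $\chi\in\Sigma^1(G)$ if and only if the induced character lies in $\Sigma^1(\pi_1^{\orb}(C,\bm{\mu}))$. That set is the whole circle when the base is Euclidean ($\Gamma\cong\Z^2$), so a class outside $\Sigma^1(G)$ must come from a hyperbolic fibration. This is how hyperbolicity is obtained; there is no need for a tree action to descend.

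\textbf{A minor point in your easy inclusion.} An infinitely generated kernel only shows that $\chi$ and $-\chi$ are not both in $\Sigma^1(\Gamma')$, and for irrational characters it says even less. To get $\Sigma^1(\Gamma')=\emptyset$, add for example the following. A rational $\chi\in\Sigma^1(\Gamma')$ would make $\Gamma'$ an ascending HNN extension of a finitely generated free group, hence of Euler characteristic $0$, which is a contradiction. Then conclude using openness of $\Sigma^1$ and density of rational points.
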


\begin{remark}
\label{rem:cv1-vs-sigma1}
Comparing Theorem~\ref{thm:cv1-kahler} with 
Theorem~\ref{thm Delzant}, one sees that the two 
formulas involve the same orbifold fibrations, with 
one exception: a hyperbolic orbifold fibration 
$f\colon X\to (C_1,\bm{\mu})$ with $g_f=1$ and 
$\abs{A_f} = 1$  contributes to $\Sigma^1(\pi_1(X))^{\compl}$ 
via Theorem~\ref{thm Delzant}, 
but does \emph{not} contribute a positive-dimensional 
component to $\VV^1(X,\C)$.

The reason is visible from formula~\eqref{eq:v1piorb}: when $g=1$ 
and $\abs{A}=1$, we have  $\VV^1(\pi_1^{\orb}(C_1,\bm{\mu}),\C) = \{\bo\}$, 
and the pullback $(f)^\#(\VV^1(\pi_1^{\orb}(C_1,\bm{\mu}),\C))$ 
contributes nothing positive-dimensional to $\VV^1(X,\C)$.

However, as shown by Liu--Liu \cite{LL}, the discrepancy 
disappears when one considers characteristic varieties 
over a field $\k$ of characteristic $p>0$ with $p\mid \mu_j$ for some $j$: 
in that case by the computation in the proof of Proposition \ref{prop:compact-orbifold-group}
the fibration $f$ does contribute a positive-dimensional 
component to $\VV^1(X;\k)$. 
This shows that the union over all fields $\k$ 
(including those of characteristic $p>0$) 
of the tropicalizations $\Trop_\k(\VV^1(X;\k))$ 
recovers the full $\Sigma^1(G)^{\compl}$, 
consistently with the formula of \cite{LL}.
\end{remark}

Based on Delzant's theorem and Theorem \ref{thm:bns-twisted-trop}, 
we obtain the following corollary, which strengthens results 
from \cite{Su-mathann} and \cite{LL}.

\begin{corollary}
\label{cor:bns-kahler}
Let $X$ be a compact K\"{a}hler manifold with $\pi_1(X)=G$. 
For any algebraically closed field $\k$ with trivial valuation 
and any representation $\sigma\colon G\to \GL_r(\k)$,
\[
\Sigma^1(G;\Z) \subseteq 
\rS\!\left(\Trop_\k\bigl(\VV^1(X,L_\sigma)\bigr)\right)^{\!\compl}.
\]
Equivalently, by Theorem~\ref{thm Delzant}, 
\[
\Trop_\k\bigl(\VV^1(X,L_\sigma)\bigr) \subseteq 
\bigcup_f \im\bigl(f^{*}\colon H^1(C_g;\R)\longrightarrow H^1(X;\R)\bigr),
\]
where the union runs over all equivalence classes of 
hyperbolic orbifold fibrations $f\colon X\to C_g$.
\end{corollary}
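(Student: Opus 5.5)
The first inclusion will follow by specializing Theorem~\ref{thm:bns-twisted-trop} to $q=1$. Since $\k$ carries the trivial valuation $\upsilon_0$, we have $\upsilon_0(\k^\times)=\{0\}\subseteq\Q$, so the rationality hypothesis holds. By Lemma~\ref{lem:novikov-conditions}\ref{c1}, the pair $(\upsilon_0,\sigma)$ satisfies the hypotheses of Lemma~\ref{lem:nov-mod} for \emph{every} representation $\sigma\colon G\to\GL_r(\k)$, with no integrality or finiteness requirement. Here $X$ is a compact manifold, so it has the homotopy type of a connected finite CW-complex, and the theorem applies. It gives
\[
\Sigma^1(X;\Z)\subseteq \rS\bigl(\Trop_{\k}(\VV^{\le 1}(X,L_\sigma))\bigr)^{\compl}.
\]
The invariant $\Sigma^1(X;\Z)$ depends only on the homotopy type and coincides with $\Sigma^1(G;\Z)$, because finite $1$-type only involves the $2$-skeleton, which can be taken to be a presentation complex of $G$.

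Next I will pass from $\VV^{\le 1}$ to $\VV^1$. Since $\VV^1(X,L_\sigma)\subseteq \VV^{\le 1}(X,L_\sigma)$, tropicalization is monotone: a point of the smaller variety maps under valuation into the tropicalization of the larger one. Hence the bound for $\VV^{\le 1}$ immediately implies the stated bound for $\VV^1$. (If one prefers, the $\VV^0$ part is finite in the sense that $H_0$ is nonzero only on characters where $\sigma\otimes\rho$ has coinvariants, but this is not needed.)

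For the equivalent reformulation, I will substitute Delzant's Theorem~\ref{thm Delzant}. It says that $\Sigma^1(G;\Z)^{\compl}=\rS(U)$, where $U=\bigcup_f \im(f^*)$. Taking complements in the first inclusion gives $\rS(\Trop_\k(\VV^1(X,L_\sigma)))\subseteq \rS(U)$. The remaining step is to lift this from the sphere to the ambient space. Both $\Trop_\k(\VV^1)$ (trivial valuation, so it is a fan, stable under positive scaling as noted after Definition~\ref{def:tropical-ring}) and $U$ (a union of linear subspaces) are cones. A nonzero $\chi$ in the tropical variety therefore has $\chi/|\chi|\in \rS(U)$, so $\chi\in U$, and $0\in U$ trivially. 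Conversely, if the cone inclusion holds, intersecting with the sphere recovers the first form. So the two statements are equivalent, and the converse direction is the same unwinding.

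The only delicate points are, first, identifying $\Sigma^1(X;\Z)$ with $\Sigma^1(G;\Z)$ together with the sign conventions, and second, making sure the nonzero-$\chi$ caveat in Theorem~\ref{thm:sigma-cjls} causes no trouble. The latter is harmless because the origin is excluded on the sphere. I do not expect a genuine obstacle here; the substantive work lies in Theorem~\ref{thm:bns-twisted-trop}.
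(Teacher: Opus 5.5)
Your proposal is correct and takes the same route as the paper. You specialize Theorem~\ref{thm:bns-twisted-trop} to $q=1$, where the trivial valuation puts you in case~\ref{c1} of Lemma~\ref{lem:novikov-conditions}, and then use that the trivial-valuation tropical variety is a cone to pass between the spherical and linear forms via Theorem~\ref{thm Delzant}; your treatment of $\VV^1\subseteq\VV^{\le 1}$ and of $\Sigma^1(X;\Z)=\Sigma^1(G;\Z)$ just makes explicit steps the paper leaves implicit. One minor caveat: ``$0\in U$ trivially'' fails when $X$ has no hyperbolic orbifold fibrations, since then $U=\emptyset$ while $0\in\Trop_\k(\VV^1(X,L_\sigma))$ whenever that locus is nonempty, so the cone form must be read with $\{0\}$ adjoined to the union---a convention the paper's statement also tacitly uses.
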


\begin{proof}
Since $\upsilon$ is trivial, $\Trop_\k(\VV^1(X,L_\sigma))$ is a cone over 
$\rS(\Trop_\k(\VV^1(X,L_\sigma)))$. The claim then follows from 
Theorems~\ref{thm:bns-twisted-trop} and~\ref{thm Delzant}.
\end{proof}

\subsection{Twisted Alexander polynomials of K\"{a}hler manifolds}
\label{subsec:twist-alex-kahler}

The next result generalizes \cite[Thm. 4.3(3)]{DPS-imrn}, 
giving a new obstruction for a finitely presented group being realizable 
as the fundamental group of a compact K\"{a}hler manifold. 

\begin{theorem}
\label{thm:alexpoly-kahler}
Let $X$ be a compact K\"ahler manifold with $\pi_1(X)=G$. 
Then for any representation $\sigma\colon G\to \GL_r(\k)$ over some field $\k$, 
the first twisted Alexander polynomial $\Delta^\sigma(X)$ of the pair $(X,\sigma)$ 
is either $0$ or $1$.
\end{theorem}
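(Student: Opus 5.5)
We reduce to an algebraically closed field: replacing $\k$ by $\overline{\k}$ changes neither the Fitting ideals nor their gcd up to units. So we may assume $\k=\overline{\k}$ with the trivial valuation. Suppose $\Delta^\sigma(X)\neq 0$; we must show it is a unit. Write $\phi\colon G\surj A$ for the projection to the maximal free abelian quotient, $n=\rank A=b_1(X)$, which is even since $X$ is K\"ahler. If $n=0$ there is nothing to prove. The case $\rank A=1$ cannot occur.

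By Proposition~\ref{prop:jump-loci-and-Alexander}(\ref{ja1}), the zero locus of $\Delta^\sigma(X)$ in $\im\phi^*\cong(\k^\times)^n$ is the union of the codimension-one components of $\VV^1(X,L_\sigma)\cap\im\phi^*$. We argue by contradiction. Suppose $\Delta^\sigma(X)$ is a nonunit. Then there is a hypersurface $W\subset(\k^\times)^n$ contained in $\VV^1(X,L_\sigma)$.

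By Theorem~\ref{thm:structure-trop}, $\Trop_\k(W)$ is a rational polyhedral fan that is pure of dimension $n-1$. By Corollary~\ref{cor:bns-kahler}, this fan lies in the finite union $\bigcup_f \im(f^*)$ over hyperbolic orbifold fibrations $f\colon X\to C_{g}$. Each $\im(f^*)$ is a linear subspace of dimension $2g$. Hence some $\im(f^*)$ has dimension at least $n-1$, and evenness forces $\im(f^*)=H^1(X;\R)$. Moreover, the tropical hypersurface of a nonmonomial Laurent polynomial is never contained in a proper linear subspace (its Newton polytope has positive dimension), so again some single $f$ satisfies $f^*\colon H^1(C_g;\R)\isom H^1(X;\R)$.

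The key step, and the main obstacle, is to exploit such an $f$. Since $f$ has connected fibers, $f_\#\colon G\to\Gamma=\pi_1^{\orb}(C_g,\bm\mu)$ is surjective, and it induces an isomorphism on maximal free abelian quotients. For $g\ge2$ we want to conclude $\Delta^\sigma=0$, contradicting our assumption; for $g=1$ we want to conclude $\Delta^\sigma=1$. However, $\sigma$ need not factor through $\Gamma$, so Proposition~\ref{prop:compact-orbifold-group} does not apply directly. I would first try to work on the chain level: $H_1(X;L_\sigma\otimes L_\rho)$ maps onto, or contains via the five-term exact sequence of $1\to K\to G\to\Gamma\to1$, a piece computed by $\Gamma$ with coefficients $H_0(K;L_\sigma)$. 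Here $H_0(K;L_\sigma)$ is a $\Gamma$-representation, the coinvariants of $\sigma$ under $K=\ker f_\#$. Twisting by $\rho$ pulled back from $\Gamma$, the five-term sequence
\[
H_2(\Gamma;M_\rho)\to H_0(\Gamma;H_1(K;L_\sigma)\otimes\rho)\to H_1(G;L_\sigma\otimes\rho)\to H_1(\Gamma;M_\rho)\to 0,
\]
with $M=H_0(K;L_\sigma)$, reduces the question to controlling the two outer terms in terms of $\rho$.

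By the Euler-characteristic computation in the proof of Proposition~\ref{prop:compact-orbifold-group}, $H_1(\Gamma;M_\rho)$ has dimension at least $\dim M\cdot(2g-2)$ generically. So if $g\ge2$ and $M\ne0$, then $\VV^1(X,L_\sigma)$ is everything and $\Delta^\sigma=0$. Otherwise we use a Fox-calculus presentation of $G$ adapted to the fibration. I would take generators of $\Gamma$ lifted to $G$ together with normal generators of $K$; the finitely many fiber-group generators contribute rows whose entries are polynomials in the torsion elements $z_j$ only. Since $\rho$ is trivial on these, their minors are constant in $\rho$. The remaining block, supported on the genus-one (or torus) part, has determinant equal to a monomial times $\det(I-\rho\cdot(\text{fixed matrix}))$-type factors, and one needs to show these generate the unit ideal after combining minors, as in part (2) of Proposition~\ref{prop:compact-orbifold-group}. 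This yields $\VV^1\cap\Hom^0$ finite, hence no hypersurface $W$, a contradiction.

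Making this last step rigorous, that is, the independence of the relevant minors from the fiber contribution, is the delicate part. As a fallback I would use the dimension count: $\Trop(W)$ is $(n-1)$-dimensional but must avoid the complement of $\Sigma^1$ away from $\im(f^*)$, and then apply Theorem~\ref{thm:sigma-cjls} to the rank-one characters in $W$ with $\upsilon$ nontrivial on Puiseux-series points.
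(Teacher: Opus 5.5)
Your first half is sound and matches the paper's first case. Since each $\im(f^*)$ and $H^1(X;\R)$ are even-dimensional, a pure $(n-1)$-dimensional $\Trop_\k(W)$ lying in the finite union forces a hyperbolic orbifold fibration $f$ with $f^*$ an isomorphism. Your treatment of $g\ge 2$ with $M=H_0(K;L_\sigma)\ne 0$, via the five-term sequence, is also what the paper does. (Your side claim that a tropical hypersurface never lies in a proper linear subspace is false: $\Trop_\k(t_1-1)=\{w_1=0\}$. The parity argument makes it unnecessary, though.)

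The gap is the remaining cases, $M=0$ (any $g$) and $g=1$. You leave these to a Fox-calculus computation that is not carried out. The sketch is also off: the generators of $K$ are not the $z_j$, and the relations conjugating them by lifts of $x_1,y_1$ do involve $\rho$. Your fallback cannot work at all. Once $f^*$ is an isomorphism, Theorem~\ref{thm Delzant} gives $\Sigma^1(G;\Z)=\emptyset$, so Theorem~\ref{thm:sigma-cjls} and every tropical bound carry no information in exactly this situation.

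The missing idea is that $K=\ker f_\#$ is finitely generated \cite[Lem.~3]{CKO}. If $K$ is generated by $m$ elements, then $H_1(K;L_\sigma)$ is a subquotient of $\k^{rm}$, so it is a finite-dimensional $\k$-vector space with a $\Gamma$-action. Consequently $H_0(\Gamma;H_1(K;L_\sigma)\otimes L_\rho)\ne 0$ only when $\rho^{-1}$ is a one-dimensional quotient of $H_1(K;L_\sigma)$, which happens for at most $\dim H_1(K;L_\sigma)$ characters. So the left-hand term of your five-term sequence is harmless. It follows that $\VV^1(X,L_\sigma)\cap\Hom^0(G,\k^\times)$ agrees, up to a finite set, with $\{\rho : H_1(\Gamma;M\otimes L_\rho)\ne 0\}$. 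The cases then close as follows:

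If $M=0$, this set is finite, so $\Delta^\sigma=1$ because $b_1=2g\ge 2$.

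If $g=1$ and $M\ne 0$, apply the rank count from the proof of Proposition~\ref{prop:compact-orbifold-group} to the $\Gamma$-representation $M$, with $Z_j$ the images of the $z_j$. The generic dimension is $\dim M\cdot s-\sum_j\rank(I+Z_j+\cdots+Z_j^{\mu_j-1})$. If this is positive, semicontinuity gives $H_1\ne 0$ for every $\rho$, and the surjection in the sequence gives $\Delta^\sigma=0$. If it is zero, the non-generic locus is finite when $g=1$, so $\Delta^\sigma=1$.

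No presentation of $G$ adapted to the fibration is needed.
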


\begin{proof}
Without loss of generality, we can always assume that $\k$ is
algebraically closed. Next we assume that 
\[
\bigcup_f 
\im(f^{*}\colon H^1(C_g;\R)\longrightarrow H^1(X;\R)) \neq H^1(X;\R).
\]
By Hodge theory and the fact that the left side is a finite union, 
its codimension is at least $2$ in $H^1(X;\R)$. 
Therefore, $\codim \Trop_\k( \VV^1(X,L_\sigma)) \geq 2$ by Corollary \ref{cor:bns-kahler}. 
Since tropicalization preserves dimensions due to 
Theorem \ref{thm:structure-trop}, we have the variety $\VV^1(X,L_\sigma)$ 
has codimension at least $2$ in $\Hom(G,\k^\times)$. 
The zero locus of 
the twisted Alexander polynomial corresponds to the codimension one 
components of $\VV^1(X,L_\sigma)$. Hence, the first twisted Alexander 
polynomial $\Delta^\sigma(G)=1$ in this case.

Next, suppose that 
\[
\bigcup_f 
\im(f^{*}\colon H^1(C_g;\R)\longrightarrow  H^1(X;\R)) = H^1(X;\R).
\]
By Theorem~\ref{thm Delzant}, this happens if and only if 
there exists a hyperbolic orbifold fibration $f\colon X \to (C_g, \bm{\mu})$ 
such that $f^{*}\colon H^1(C_g;\R)\to H^1(X;\R)$ is an isomorphism. 
The map $f$ induces a short exact sequence of groups, 
\[
\begin{tikzcd}[column sep=18pt]
1\ar[r]& K \ar[r]&  G \ar[r, "f_{\#}"]&[2pt]  \Gamma
\ar[r]& 1,
\end{tikzcd}
\]
where $\Gamma=\pi_1^{\orb}(C_g, \bm{\mu})$ and $K=\ker(f_{\#})$ 
is a finitely generated group due to \cite[Lem.~3]{CKO}. 
Moreover, $f_{\#}$ induces an isomorphism between the maximal 
free abelian quotients of $G$ and $\Gamma$; in particular, 
$\Hom^0(G,\k^{\times}) \cong \Hom^0(\Gamma,\k^{\times})$.

Given a character $\rho \in \Hom^0(\Gamma,\k^{\times})$, 
let $f^* L_{\rho}$ denote the rank-one local system on $G$ 
obtained by pullback along $f_{\#}$. 
The Hochschild--Serre spectral sequence yields the exact sequence
\begin{equation}
\label{eq:hs-ses}
\begin{tikzcd}[column sep=16pt]
H_0(\Gamma; H_1(K;L_\sigma)\otimes L_\rho) \ar[r]
& H_1(G; L_\sigma\otimes f^*L_{\rho}) \ar[r]
& H_1(\Gamma; H_0(K;L_\sigma)\otimes L_\rho) \ar[r]& 0.
\end{tikzcd}
\end{equation}
Note that $(L_\sigma\otimes f^* L_{\rho})\vert_K = (L_\sigma)\vert_K$, 
so that $H_*(K; L_\sigma)$ is independent of $\rho$. We consider 
two sub-cases.

\smallskip
\noindent\textit{Sub-case (i): $H_0(K; L_\sigma)=0$.}
Then \eqref{eq:hs-ses} gives a surjection from
$H_0(\Gamma; H_1(K;L_\sigma)\otimes L_\rho)$
onto $H_1(G; L_\sigma\otimes f^*L_{\rho})$.

We claim that $H_1(K; L_\sigma)$ is a finite-dimensional $\k$-vector space.
Since $K$ is finitely generated by \cite[Lem.~3]{CKO}, say by $s$ elements 
$g_1,\ldots,g_s$, the partial free resolution 
$\k[K]^s \xrightarrow{d_1} \k[K] \to \k \to 0$ gives, upon tensoring with 
$V\coloneqq (L_\sigma)|_K \cong \k^r$, that $H_1(K;L_\sigma)$ is a quotient 
of $\ker(d_1\otimes V)\subset V^s \cong \k^{rs}$, hence finite-dimensional.

Consequently, $H_0(\Gamma; H_1(K;L_\sigma)\otimes L_\rho)\ne 0$ 
for at most finitely many $\rho \in \Hom^0(\Gamma, \k^\times)$.
Hence $\VV^1(X,L_\sigma)\cap \Hom^0(G,\k^{\times})$ is a finite set.
Since $b_1(G) = 2g \ge 2$, this finite set has no codimension-one 
components in $\Hom^0(G,\k^{\times})$, and so $\Delta^\sigma(G) = 1$.

\smallskip
\noindent\textit{Sub-case (ii): $H_0(K; L_\sigma)\ne 0$.}  
Let $L'$ denote the local system on $\Gamma$ with stalk 
$H_0(K; L_\sigma)$. 
By \eqref{eq:hs-ses} and the computations in Section~\ref{subsec:orbifolds}, 
applied to the orbifold group $\Gamma = \pi_1^{\orb}(C_g, \bm{\mu})$ 
with $g\ge 1$ and the local system $L'$, one of the 
following holds:
\begin{enumerate}[label=\textup{(\alph*)},itemsep=2pt]
\item $H_1(\Gamma; L'\otimes L_\rho) \ne 0$ for all 
$\rho \in \Hom^0(\Gamma, \k^\times)$, then by \eqref{eq:hs-ses} so is
$H_1(G; L_\sigma\otimes f^*L_{\rho})$, hence $\Delta^\sigma(G) = 0$ in this case; or
\item $H_1(\Gamma; L'\otimes L_\rho) = 0$ for all but 
finitely many $\rho$, in which case (combining with 
the finiteness of the left-hand term in \eqref{eq:hs-ses})
$\VV^1(X,L_\sigma)\cap \Hom^0(G,\k^{\times})$ is again 
finite, and so $\Delta^\sigma(G) = 1$.
\end{enumerate}

\smallskip\noindent
In all cases, $\Delta^\sigma(G)$ is either $0$ or $1$ 
(up to units), completing the proof.
\end{proof}

\begin{corollary}
\label{cor:Alexander-polynomial}
Let $X$ be a compact K\"ahler manifold with $\pi_1(X)=G$. 
Consider $\sigma=\id\colon G\to \GL_1(\k)$ the trivial representation 
over some field. If $b_1(X)\geq 4$, then
$\Delta^\sigma(X)=0$ if and only if  there exists a 
hyperbolic orbifold fibration $f\colon X \to C_g$ such that 
$f^*\colon H^1(C_g;\R)\to H^1(X;\R)$ is an isomorphism; 
otherwise $\Delta^\sigma(X)=1$.
In particular, it does not depend on $\ch(\k)$.
\end{corollary}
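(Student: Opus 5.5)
The plan is to specialize the proof of Theorem~\ref{thm:alexpoly-kahler} to $\sigma=\id$, $r=1$, and to use the hypothesis $b_1(X)\ge 4$ to rule out the one degenerate situation. We may assume $\k$ is algebraically closed, since $\Delta^\sigma(X)$ is insensitive to extending the field. We split according to whether some hyperbolic orbifold fibration $f\colon X\to (C_g,\bm{\mu})$ has $f^*\colon H^1(C_g;\R)\to H^1(X;\R)$ an isomorphism.

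\emph{No such $f$ exists.} By Theorem~\ref{thm Delzant}, the union of the images of the $f^*$ is a proper, finite union of subspaces of $H^1(X;\R)$. By Hodge theory each subspace has even dimension, so each has codimension at least $2$. Corollary~\ref{cor:bns-kahler} then gives $\codim\Trop_\k(\VV^1(X,\k))\ge 2$. Theorem~\ref{thm:structure-trop} transfers this to $\codim \VV^1(X,\k)\ge 2$ in $\Hom(G,\k^\times)$. Proposition~\ref{prop:jump-loci-and-Alexander}\eqref{ja1} applies, since $\rank A=b_1\ge 4>1$, and yields $\Delta(X)=1$. This is exactly the first case of the proof of Theorem~\ref{thm:alexpoly-kahler}.

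\emph{Such an $f$ exists.} Then $2g=b_1(X)\ge 4$, so $g\ge 2$; this is where the hypothesis is used. Let $\rho\in\Hom^0(\Gamma,\k^\times)$ and use the surjection in \eqref{eq:hs-ses}, $H_1(G;f^*L_\rho)\surj H_1(\Gamma;H_0(K;\k)\otimes L_\rho)$. With trivial coefficients, $H_0(K;\k)=\k$ with trivial $\Gamma$-action, so we are in sub-case (ii) with $L'=\k$. Proposition~\ref{prop:compact-orbifold-group}, whose $g\ge 2$ clause holds for every characteristic, shows that $H_1(\Gamma;L_\rho)\neq 0$ for all $\rho\in\Hom^0(\Gamma,\k^\times)$. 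Via $\Hom^0(G,\k^\times)\cong\Hom^0(\Gamma,\k^\times)$, the identity component lies in $\VV^1(X,\k)$. Proposition~\ref{prop:jump-loci-and-Alexander}\eqref{ja1} then gives $\Delta(X)=0$.

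The two cases are mutually exclusive and exhaustive, and in each the answer depends only on the existence of such an $f$, not on $\k$. Hence $\Delta(X)$ is independent of $\ch(\k)$. The step I expect to need care is the genus reduction in the second case. For $g=1$, Proposition~\ref{prop:compact-orbifold-group}(2) would make the answer depend on whether $\ch(\k)\mid\mu_j$. In the trivial-coefficient case, however, $Z_j=I_r$ forces $L'$ to have $Z_j=1$, and such $g=1$ fibrations are exactly what $b_1\ge 4$ excludes. I would also double-check that Hodge theory gives even-dimensional images in the first case, which is what ensures codimension $\ge 2$.
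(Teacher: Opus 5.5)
Your proposal is correct and follows the paper's proof essentially step for step. The case with no such fibration is the first case of the proof of Theorem~\ref{thm:alexpoly-kahler}. When $f^*$ is an isomorphism, $H_0(K;\k)=\k$ puts you in sub-case~(ii) with $L'=\k$, and $b_1\ge 4$ forces $g\ge 2$, so $H_1(\Gamma;L_\rho)\ne 0$ on all of $\Hom^0(\Gamma,\k^\times)$.

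The one minor difference is the source you cite for this characteristic-free nonvanishing. You use Proposition~\ref{prop:compact-orbifold-group}, while the paper cites formula~\eqref{eq:v1piorb}, which is stated over $\C$. Your citation is the cleaner justification for the claim that the answer does not depend on $\ch(\k)$.
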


\begin{proof}
Following the proof of Theorem~\ref{thm:alexpoly-kahler}, 
we may assume that there exists a hyperbolic orbifold fibration 
$f\colon X \to (C_g, \bm{\mu})$ such that 
$f^*\colon H^1(C_g;\R)\to H^1(X;\R)$ is an isomorphism 
(otherwise $\Delta^\sigma(X)=1$ already, regardless of $\sigma$).

Since $\sigma=\id$ is the trivial rank-one representation, 
$(L_\sigma)|_K = \k_K$ is the constant sheaf on $K$, so 
$H_0(K;\, L_\sigma) = \k \ne 0$. Thus sub-case~(i) 
of the proof of Theorem~\ref{thm:alexpoly-kahler} does not occur, 
and we are in sub-case~(ii), with $L' = H_0(K;\k) = \k$ 
the trivial local system on $\Gamma = \pi_1^{\orb}(C_g,\bm{\mu})$.

It remains to decide between sub-cases (ii)(a) and (ii)(b). 
For the trivial local system $L' = \k$, 
$H_1(\Gamma;\, L' \otimes L_\rho) = H_1(\Gamma;\, L_\rho)$. 
Since $f^*$ is an isomorphism, 
$b_1(X) = b_1(C_g) = 2g$, so the condition $b_1(X) \geq 4$ 
forces $g \geq 2$. By formula~\eqref{eq:v1piorb}, 
$H_1(\Gamma;\, L_\rho) \ne 0$ for \emph{all} 
$\rho \in \Hom^0(\Gamma, \k^\times)$, placing us in sub-case~(ii)(a). 
Hence $\Delta^\sigma(G) = 0$, and this conclusion is independent 
of $\ch(\k)$ since formula~\eqref{eq:v1piorb} holds over any 
algebraically closed field.
\end{proof}

\subsection{Examples}
\label{subsec:examples}

In this subsection, we construct two examples from the weighted right angled Artin groups, 
which is a natural generalization of the right-angled Artin groups.

\begin{definition}
\label{def:weighted RAAG}
Let $\Gamma_\ell= (V, E, \ell)$ be an edge-weighted finite simple graph, with 
vertex set $V$, edge set $E$  and an edge weight function $\ell \colon E \to \Z_{>0}$.
The weighted right-angled Artin group associated to  $\Gamma_\ell$ is the group 
$G_{\Gamma_\ell}$ generated by the vertices $a\in V$, with a defining relation
 \[
 [a_i,a_j]^{\ell(e)}=1
 \]
for each edge $e = \{a_i,a_j\}$ in $E$ (here $[a_i,a_j]=a_i a_j a_i^{-1}a_j^{-1}$). 
If $\ell(e)=1$ for all $e\in E$, then $G_{\Gamma_\ell}$ is the classical right-angled Artin group.
\end{definition}

Such weighted right angled Artin groups are studied in \cite{LL}. In particular, 
the K\"ahler  weighted right angled Artin groups are classified in loc. cit..
\begin{remark}
\label{rem:WRAAG}
For an edge  with weight $m$ in $G_{\Gamma_\ell}$ connecting vertices 
$a$ and $b$, we have the relation $[a,b]^m=1$ in $G_{\Gamma_\ell}$. 
The abelianization of the Fox derivative of this relation gives 
\[
\begin{pmatrix}
  m(1-b)  & m(a-1) 
\end{pmatrix}^T,
\]
 which is  a column in the Alexander matrix. Such column vanishes if we 
 use field coefficient $\k$ and $\mathrm{char}(\k)$ divides $m$. 
 On the other hand, if we use complex coefficients, this column 
 is equivalent to
 \[
 \begin{pmatrix}
  1-b  & a-1 
\end{pmatrix}^T.
\]
In particular, if $\Gamma_\ell $ is a complete graph with $n$-vertices, 
we have that $\VV^1(G_{\Gamma_\ell},\C)=\VV^1(\Z^n,\C)=\{\bo\}$. 
\end{remark}

\begin{example}
\label{ex:weighted-artin}
Consider the group
\[
G= \langle a_1,a_2,a_3,a_4 \mid [a_1,a_i]=1 \text{ for } 2\leq i\leq 4,\;
[a_i,a_j]^2=1 \text{ for } 2\leq i<j\leq 4 \rangle.
\]
This is a weighted right-angled Artin group; it is shown in \cite{LL} that $G$
is not a K\"{a}hler group.

By Remark~\ref{rem:WRAAG}, the Fox-derivative columns contributed
by the weight-$2$ relations $[a_i,a_j]^2=1$ are equivalent over $\C$
to those of the corresponding unweighted commutator relations $[a_i,a_j]=1$.
Hence the Alexander matrix of $G$ over $\C$ coincides with that of
$\Z^4$, giving $\VV^1(G,\C)=\{\bo\}$. Since this variety has no
codimension-one component in $(\C^{\times})^4$, 
Proposition~\ref{prop:jump-loci-and-Alexander}\eqref{ja1} 
gives $\Delta^{\id}(G)=1$ over $\C$---consistent
with K\"{a}hlerianity, so the untwisted test over $\C$ yields no obstruction.

Over a field $\k$ with $\ch(\k)=2$, the weight-$2$ columns vanish
identically, and the Alexander matrix reduces to that of
$\Z \times F_3 = \langle a_1,a_2,a_3,a_4 \mid [a_1,a_i]=1,\, 2\le i\le 4\rangle$.
The unique codimension-one component of $\VV^1(G,\k)$ is then $\{t_1=1\}$,
so Proposition~\ref{prop:jump-loci-and-Alexander}\eqref{ja1} gives
$\Delta^{\id}(G)=t_1-1$ over $\k$. Since $t_1-1$ is neither $0$ nor $1$,
Theorem~\ref{thm:alexpoly-kahler} shows that $G$ is not a K\"{a}hler group.

This example illustrates that Theorem~\ref{thm:alexpoly-kahler} can
detect non-K\"{a}hlerianity via fields of positive characteristic even when
the untwisted test over $\C$ fails.
\end{example}

\begin{example}
\label{ex:KT-knot product with WRRAG}
Let $G$ be  the fundamental group of the complement of the Conway knot $K$.
Then the Alexander polynomial $\Delta_K(t) = 1$ and as shown by 
Friedl--Kim \cite[Sec.~5.2]{FK06}, $G$ admits a representation 
$\sigma_1\colon G\to \GL_4(\overline{\F}_{13})$ factoring 
through $S_5$, for which the twisted Alexander polynomial 
$\Delta_K^{\sigma_1}(t) \ne 1$. Here $\overline{\F}_{13}$ is 
an algebraically closed field with characteristic $13$. Hence we have 
$ \VV^1(G,\C)=\{\bo\}$ and $ \VV^1(G, \overline{\F}_{13}) $ consists of roots of 
$ \Delta_K^{\sigma_1}(t) $ and $1$.

For any $n\geq 2$, set $G_{2n-1}= \langle a_1,\dots,a_{2n-1} \mid  
[a_i,a_j]^{13}=1 \text{ for any }  1\leq i<j\leq 2n-1 \rangle.$
By Remark \ref{rem:WRAAG}, we have
$ \VV^1(G_{2n-1},\C)=\{\bo\}$ and $ \VV^1(G_{2n-1},\overline{\F}_{13})= 
(\overline{\F}_{13}^\times)^{2n-1}$. 
Using Proposition \ref{prop:product-formula}, we have 
$\VV^1( G\times G_{2n-1},\C)= \{\bo\}. $ By Proposition \ref{prop:jump-loci-and-Alexander}, 
the first Alexander polynomial over $\C$ of the group $G\times G_{2n-1}$ is $1$, which 
is consistent with K\"{a}hlerianity in \cite[Theorem 4.3(3)]{DPS-imrn}. So the classical Alexander 
polynomial gives no obstruction.
Note, moreover, that $b_1(G)=1$, while the abelianization of $G_{2n-1}$ 
is $\Z^{2n-1}$; hence $b_1(G\times G_{2n-1})=2n$. Since the first Betti 
number of a K\"{a}hler group must be even, the parity obstruction does 
not apply either. Thus neither of the two most immediate tests for 
K\"{a}hlerianity rules out the groups $G\times G_{2n-1}$.
    
On the other hand, consider the representation $\sigma'$ of $G\times G_{2n-1}$ given 
by the composed homomorphism 
\[
G\times G_{2n-1} \longrightarrow G \longrightarrow  \GL_4(\overline{\F}_{13}),
\]
where the first map is the quotient map and the second map is 
the representation $\sigma_1$. Then by  Proposition \ref{prop:product-formula}, 
$\VV^1( G\times G_{2n-1}, L_{\sigma'})$ is a finite union
of codimension-one tori of type $\{ t_1-\lambda\}$ with $\lambda$ 
either a root of $ \Delta_K^{\sigma_1}(t) $ or $1$.
Due to Proposition \ref{prop:jump-loci-and-Alexander},  
$G\times G_{2n-1}$  has non-trivial twisted Alexander polynomial 
over $\overline{\F}_{13}$, and thus it is not a K\"{a}hler group for any $n\geq 2$.
\end{example}

\section{$3$-manifold groups} 
\label{sect:3-manifold}

A $3$-manifold pair is a pair $(M,\phi)$, where $M$ is a compact, connected, 
orientable, $3$-manifold with toroidal or empty boundary, and $\phi \in 
\Hom(\pi_1(M),\Z)$ is  nontrivial. We say that a $3$-manifold pair $(M,\phi)$ 
fibers over $S^1$ if there exists a fibration $p \colon M \to S^1$ such that the 
induced map $p_*\colon \pi_1(M) \to \pi_1(S^1)\cong \Z$ coincides with $\phi$. 
We refer to such $\phi$ as a fibered class.

The \emph{Thurston norm}\/ $\| \phi \|_T$ of a class 
$\phi\in H^1(M;\Z)$ is defined as the infimum of $-\chi(S_0)$, 
where $S$ runs though all the properly embedded, oriented 
surfaces in $M$  dual to $\phi$, and $S_0$ denotes 
the result of discarding all components of $S$ which 
are disks or spheres. In \cite{Th}, Thurston  proved that 
$\|-\|_T$ defines a seminorm on $H^1(M;\Z)$, which can 
be extended to a continuous seminorm on $H^1(M;\R)$.

The unit norm ball, $B_T=\{\phi\in H^1(M;\R) \mid \|\phi \|_T\leq 1\}$,
is a rational polyhedron with finitely many sides and which is symmetric 
in the origin. Moreover, there are facets of $B_T$, called the {\em fibered faces}\/ 
(coming in antipodal pairs), so that a class $\phi \in H^1(M;\Z)$ fibers if and 
only if it lies in the cone over the interior of a fibered face.
In \cite[Thm.~E]{BNS}, Bieri, Neumann, and Strebel showed that 
the BNS invariant of $G=\pi_1(M)$ is the projection onto $\rS(G)$ of 
the open fibered faces of the Thurston norm ball $B_T$; in particular, 
$\Sigma^1(G)=-\Sigma^1(G)=\Sigma^1(G;\Z)$. 

On the other hand, the following results due to Friedl and Vidussi \cite{FV13}
shows that the non-fibered pair can be detected by the 
twisted Alexander polynomial.

\begin{theorem}[{\cite[Thm.~1.1]{FV13}}]
\label{thm:Friedl-Vidussi}
Let $(M,\phi)$ be a $3$-manifold pair. If $\phi$ is non-fibered, there exists
an epimorphism $\alpha \colon G=\pi_1(M)\surj Q$ onto a finite group such that, 
letting $\sigma\coloneqq\reg \circ \alpha \colon G \to \GL_{|Q|}(\Z)$, 
where $\reg$ denotes the regular representation of $Q$, we have
\[
\Delta^{\phi,\sigma}(M)=0.
\]
\end{theorem}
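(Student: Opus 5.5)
The plan is to convert the vanishing of $\Delta^{\phi,\sigma}(M)$ into a statement about the homology of a finite cover, and then produce a cover with the required property.

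\emph{Reduction to a finite cover.} Let $\alpha\colon G\surj Q$ be an epimorphism onto a finite group, with associated finite regular cover $p\colon M_\alpha\to M$, and set $\sigma=\reg\circ\alpha$. By Shapiro's lemma (equivalently, the projection formula used in Proposition~\ref{prop:finite-cover}) there is an isomorphism of $\Z[t^{\pm1}]$-modules
\[
H_1^{\phi,\sigma}(M;\Z[t^{\pm1}])\;\cong\; H_1^{p^*\phi}(M_\alpha;\Z[t^{\pm1}]).
\]
After tensoring with $\Q$, Proposition~\ref{prop:jump-loci-and-Alexander}\eqref{ja2} says that $\Delta^{\phi,\sigma}(M)=0$ if and only if this module has positive free rank. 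It is therefore enough to find a finite cover $M_\alpha$ for which the untwisted Alexander module of $(M_\alpha,p^*\phi)$ is not torsion. Equivalently, $\dim H_1(M_\alpha;\C_\lambda)>0$ for generic $\lambda$, where $\C_\lambda$ is the rank-one system pulled back along $p^*\phi$. If this happens for a non-regular cover, one may pass to its normal core; this is harmless, because the regular representation of the larger quotient contains the permutation representation of the smaller one.

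\emph{A sufficient condition.} Positive free rank follows once there is a finite cover $M_\alpha$ together with an epimorphism $\psi\colon\pi_1(M_\alpha)\surj F_2$ such that $p^*\phi$ factors as $\bar\phi\circ\psi$ for some $\bar\phi\colon F_2\to\Z$. Indeed, for every rank-one system on $F_2$ we have $\dim H_1(F_2;\C_\lambda)\ge 1$, and a surjection on $\pi_1$ induces a surjection on $H_1$ with any local coefficients. So generic twisted $H_1$ of $M_\alpha$ along $p^*\phi$ is nonzero.

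\emph{Producing the cover (main obstacle).} I would first treat the case where $M$ is irreducible and not a graph manifold with $b_1=1$; prime decompositions and the remaining Seifert and Sol cases I would handle directly. Since $\phi$ is non-fibered, Stallings' theorem says $\ker\phi$ is not finitely generated. I would then invoke the virtual specialness of $\pi_1(M)$ (Agol, Wise, Przytycki--Wise), which gives separability of the relevant subgroups and a virtually RFRS structure. The aim is to build a finite cover in which a Thurston-norm-minimizing surface $\Sigma$ dual to $\phi$ lifts to a surface whose complement has ``extra'' first homology. Concretely, cutting along $\Sigma$ yields a sutured manifold that is not a product, so $\pi_1(\Sigma)\to\pi_1(M\setminus\Sigma)$ is not an isomorphism. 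Subgroup separability should let us pass to a finite cover in which the corresponding HNN-type splitting has an amalgamating map that fails to be surjective on $H_1$ by a free-rank amount. That defect would then yield the epimorphism to $F_2$ compatible with $\phi$.

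The delicate point is keeping the compatibility $p^*\phi=\bar\phi\circ\psi$ while forcing the rank to grow in the cover. This is where I expect the real work lies: combining the sutured-manifold non-product condition with separability of $\pi_1(\Sigma)$ and of the complementary pieces, in the spirit of Friedl--Vidussi's earlier ``twisted Alexander polynomials detect fibered $3$-manifolds'' argument.
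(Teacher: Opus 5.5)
The paper does not prove this statement. It is quoted from \cite{FV13}, so your proposal has to stand on its own.

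Your first two steps are correct. Shapiro's lemma identifies the twisted module with the Alexander module of $(M_\alpha,p^*\phi)$, and $\Delta^{\phi,\sigma}(M)=0$ is equivalent to that module having positive rank over $\Q(t)$. An epimorphism $\psi\colon\pi_1(M_\alpha)\surj F_2$ with $p^*\phi=\bar\phi\circ\psi$ then forces $H_1(M_\alpha;\C_\lambda)\neq0$ for every $\lambda$.

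The gap is the third step, which is the whole content of the theorem: you do not carry it out, and the mechanism you sketch targets the wrong invariant. Write $\tilde\Sigma=p^{-1}(\Sigma)$ and $\tilde W=p^{-1}(W)$. A failure of $\iota_\pm\colon H_1(\tilde\Sigma)\to H_1(\tilde W)$ to be surjective does not produce a free quotient of $\pi_1(M_\alpha)$. It does not even force positive rank via Mayer--Vietoris, because the relevant map there is $t\iota_+-\iota_-$, which can be onto over $\Q(t)$ even when $\iota_-$ is not. You also leave the compatibility $p^*\phi=\bar\phi\circ\psi$ as an unresolved ``delicate point''. The appeal to virtual RFRS and the separate treatment of Seifert, Sol and graph manifolds are never specified.

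The missing idea is to count components, not homology.
\begin{itemize}
\item Choose an incompressible Thurston-norm-minimizing surface $\Sigma$ dual to $\phi$ with $W=M\setminus\nu\Sigma$ connected. If $\Sigma$ is disconnected, the dual graph of this splitting of $M$ already has first Betti number at least $2$, and the argument below works with $Q$ trivial. So assume $\Sigma$ is connected.
\item For $M$ irreducible, non-fiberedness means $W$ is not a product, so $\iota_-(\pi_1\Sigma)\subsetneq\pi_1(W)\le\pi_1(M)$.
\item Embedded $\pi_1$-injective surface subgroups of compact $3$-manifold groups are separable (Przytycki--Wise). Hence there is a finite quotient $\alpha\colon\pi_1(M)\surj Q$ with $\alpha(\iota_-\pi_1\Sigma)\subsetneq\alpha(\pi_1W)$.
\item In $M_\alpha$, $\tilde\Sigma$ has $|Q|/|\alpha(\pi_1\Sigma)|$ components, while $\tilde W$ has $|Q|/|\alpha(\pi_1W)|$, strictly fewer. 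So the dual graph $\Gamma$ of the decomposition of $M_\alpha$ along $\tilde\Sigma$ has $b_1(\Gamma)\ge2$.
\item The collapse map $M_\alpha\to\Gamma$ is surjective on $\pi_1$. Since $p^*\phi$ is dual to $\tilde\Sigma$ with the lifted co-orientation, it is the pullback of the cocycle counting signed edge crossings. The compatibility you worried about is therefore automatic.
\item A Nielsen change of basis in $\pi_1(\Gamma)\cong F_{b_1(\Gamma)}$, making the edge count vanish on all but one basis element, gives $\psi$ onto $F_2$ with $p^*\phi=\bar\phi\circ\psi$. Your second step then finishes the proof.
\end{itemize}
With this argument, no RFRS input and no Seifert, Sol or graph-manifold case split are needed; only reducible $M$ requires a separate free-product argument.
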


As a direct application, we show that the upper bound from 
Theorem \ref{thm:bns-twisted-trop} is sharp for a large class 
of $3$-manifold groups.

\begin{theorem}
\label{thm:sigma-3mfd} 
Let $M$ be a compact, connected, orientable, $3$-manifold with toroidal 
or empty boundary. Setting $G=\pi_1(M)$, we have
\begin{equation}
\label{eq:sigma-3mfd}
\Sigma^1(G) = \rS\Bigg(\overline{\bigcup_{\sigma} 
\Trop_\C\big(\VV^1(M,L_{\sigma}\otimes \C)\big)}\Bigg)^{\compl},
\end{equation}
where the union runs over all representations 
$\sigma\colon G\to \GL_r(\Z)$ with finite image.
\end{theorem}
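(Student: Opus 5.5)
The plan is to prove the two inclusions separately. Write $T=\overline{\bigcup_{\sigma}\Trop_\C(\VV^1(M,L_\sigma\otimes\C))}$, with the union over finite-image $\sigma\colon G\to\GL_r(\Z)$.

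\emph{Upper bound, $\Sigma^1(G)\subseteq \rS(T)^{\compl}$.} Each such $\sigma$ has finite image, so with $\k=\C$ and the trivial valuation condition~\ref{c1} (or~\ref{c2}) of Lemma~\ref{lem:novikov-conditions} holds. Theorem~\ref{thm:bns-twisted-trop} with $q=1$ then gives $\rS(\Trop_\C(\VV^{\le 1}(M,L_\sigma\otimes\C)))\subseteq\Sigma^1(M;\Z)^{\compl}$, and $\VV^1\subseteq\VV^{\le1}$. By \cite[Thm.~E]{BNS}, $\Sigma^1(G)=\Sigma^1(G;\Z)$ for $3$-manifold groups, and $\Sigma^1(M;\Z)=\Sigma^1(G;\Z)$ because $\Sigma^1$ depends only on $\pi_1$. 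Since $\Sigma^1(G)^{\compl}$ is closed in $\rS(G)$ and every $\Trop_\C$ here is a cone (trivial valuation), passing to the closure of the union keeps $\rS(T)\subseteq\Sigma^1(G)^{\compl}$.

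\emph{Lower bound, $\Sigma^1(G)^{\compl}\subseteq\rS(T)$.} Take first a rational point $[\phi]\in\Sigma^1(G)^{\compl}$, represented by a primitive epimorphism $\phi\colon G\surj\Z$. By \cite[Thm.~E]{BNS}, $\phi$ is not fibered. Theorem~\ref{thm:Friedl-Vidussi} supplies a finite-image $\sigma=\reg\circ\alpha$ into $\GL_{|Q|}(\Z)$ with $\Delta^{\phi,\sigma}(M)=0$. Since $\rank A=1$, Proposition~\ref{prop:jump-loci-and-Alexander}\eqref{ja2} (over $\C$) gives $\im\phi^*\subseteq\VV^1(M,L_\sigma\otimes\C)$, so the one-parameter subtorus $\lambda\mapsto\lambda^{\phi}$ lies in the jump locus. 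Tropicalizing over Puiseux series with $\upsilon(\lambda)=\pm1$ produces the points $\pm\phi$, so $\R\cdot\phi\subseteq\Trop_\C(\VV^1(M,L_\sigma\otimes\C))$ and $[\phi]\in\rS(T)$. (A technical point to check is that $\Delta^{\phi,\sigma}$ over $\Z$ vanishing implies vanishing over $\C$; this follows by flatness of $\Q$ and because the ranks of the presentation matrices agree over $\Q$ and $\C$.)

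For general points, $\Sigma^1(G)^{\compl}$ is the projection of the complement of the open fibered faces, i.e.\ a finite union of cones over closed rational polyhedral pieces of $H^1(M;\R)$ (the non-fibered faces together with boundaries of fibered faces). Its rational points are therefore dense in it. Each rational point lies in $\rS(T)$, and $T$ is closed, so $\rS(T)$ is closed, and hence $\Sigma^1(G)^{\compl}\subseteq\rS(T)$.

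The step I expect to be most delicate is this density/closure argument, together with the fact that the union over $\sigma$ is infinite. This is exactly why the closure appears in \eqref{eq:sigma-3mfd}, and I must make sure that the closed set $\rS(T)$ still lies inside the closed set $\Sigma^1(G)^{\compl}$ in the first part. The remaining points are sign conventions, harmless here since $\Sigma^1(G)=-\Sigma^1(G)$, and the degenerate case in which the Thurston norm vanishes somewhere, which is again handled by the Friedl--Vidussi theorem because it applies to every non-fibered class.
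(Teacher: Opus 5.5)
Your proposal is correct and follows essentially the same route as the paper. The upper bound comes from Theorem~\ref{thm:bns-twisted-trop} (trivial valuation on $\C$) together with closedness of $\Sigma^1(G)^{\compl}$. The reverse inclusion combines the Friedl--Vidussi theorem, Proposition~\ref{prop:jump-loci-and-Alexander}\eqref{ja2}, and density of rational classes in the rational polyhedral set $\Sigma^1(G)^{\compl}$ from \cite[Thm.~E]{BNS}. Two points the paper leaves implicit are made explicit in your version: the cone argument used when passing to the closure, and the check that vanishing of $\Delta^{\phi,\sigma}$ over $\Z$ persists over $\C$.
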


\begin{proof}
Theorem~\ref{thm:bns-twisted-trop} and Corollary~\ref{cor:twist-bound}
give $\bigcup_{\sigma}\Trop_\C\big(\VV^1(M,L_{\sigma}\otimes \C)\big)
\subseteq \Sigma^1(G)^{\compl}$ for any collection of finite-image
representations $\sigma$.  Since $\Sigma^1(G)^{\compl}$ is closed,
taking closures preserves this containment, so
\[
\Sigma^1(G) \subseteq \rS\Bigg(\overline{\bigcup_{\sigma}
\Trop_\C\big(\VV^1(M,L_{\sigma}\otimes \C)\big)}\Bigg)^{\compl}.
\]

For the reverse inclusion, it suffices to show that every non-fibered 
primitive class $\phi \in H^1(M;\Z)$ lies in 
$\Trop_\C\big(\VV^1(M,L_{\sigma}\otimes\C)\big)$ for some 
finite-image representation $\sigma$.  Indeed, since $\Sigma^1(G)^{\compl}$
is a finite union of rational polyhedral cones \cite[Thm.~E]{BNS}, 
the primitive integral classes are dense in 
$\Sigma^1(G)^{\compl}\cap S(G)$, and any non-fibered rational class is a 
positive multiple of a primitive one.  Every point of $\Sigma^1(G)^{\compl}$
is therefore a limit of primitive integral non-fibered classes, 
hence belongs to the closure of 
$\bigcup_\sigma\Trop_\C\big(\VV^1(M,L_\sigma\otimes\C)\big)$
once those classes are shown to lie in the union itself.

Let $\phi\colon G\surj \Z$ be a non-fibered primitive class.
By Theorem~\ref{thm:Friedl-Vidussi}, there exists a representation 
$\sigma\colon G\to \GL_r(\Z)$ with finite image such that, upon 
extending $\sigma$ over $\C$, we have $\Delta^{\phi,\sigma}(M)=0$. 
The surjection $\phi$ induces embeddings 
\[
\phi^*\colon \Hom(\Z,\C^*) \longinj \Hom(G,\C^*)
\ \text{ and } \
\phi^\# \colon \Hom(\Z,\R) \longinj H^1(G;\R).
\]
By  Proposition \ref{prop:jump-loci-and-Alexander}\eqref{ja2}, 
$\Delta^{\phi,\sigma}(M)=0$ implies that 
$\C^*\cong \im(\phi^*) \subset \VV^1(M,L_\sigma\otimes \C)$, and therefore 
\[
\im(\phi^\#) = \R\cdot \phi \subset 
\Trop_\C\big(\VV^1(M,L_\sigma\otimes \C)\big). \qedhere
\]
\end{proof}

\begin{question}
\label{quest:face}
With the notation of Theorem~\ref{thm:sigma-3mfd}, let 
$F_1,\ldots,F_k$ be the non-fibered faces of the Thurston norm ball $B_T$, 
and let $C_j$ denote the closed cone over $F_j$.
For each non-fibered class $\phi \in \operatorname{int}(C_j)$, the 
Friedl--Vidussi theorem provides a finite-image representation 
$\sigma_\phi$ such that $\Delta^{\phi,\sigma_\phi}(M)=0$.
Can this choice be made uniform on each face, i.e., does there exist,
for each $j$, a single representation $\sigma_j$ such that
\[
C_j \subset \Trop_\C\big(\VV^1(M,L_{\sigma_j}\otimes \C)\big)?
\]
\end{question}

An affirmative answer would imply that the closure in Theorem~\ref{thm:sigma-3mfd} 
can be taken over at most $k$ representations, one for each non-fibered face 
(the union being finite, and each summand closed, no closure is then needed).

\begin{remark}
\label{rem:FV-nonfibered}
The Friedl--Vidussi argument does not yield a positive answer to 
Question~\ref{quest:face} directly.  For each non-fibered class 
$\phi \in \inter(C_j)$, their proof produces a finite 
quotient $\alpha_\phi \colon \pi_1(M) \surj G_\phi$ 
by applying a separability argument to a Thurston-norm minimizing 
surface dual to $\phi$.  Since that surface varies with $\phi$, 
the resulting representation $\sigma_{\alpha_\phi}$ varies as well, 
and their method gives no control over whether a single finite-image 
representation can certify $\Delta^{\phi,\sigma}(M)=0$ for all 
integral classes $\phi \in C_j$ simultaneously.
\end{remark}

%%%%%%%%%%%%%%%%%%%%%%%%%%%%%%

\newcommand{\arxiv}[1]
{\texttt{\href{http://arxiv.org/abs/#1}{arXiv:#1}}}
\newcommand{\arx}[1]
{\texttt{\href{http://arxiv.org/abs/#1}{arXiv:}}
\texttt{\href{http://arxiv.org/abs/#1}{#1}}}
\newcommand{\doi}[1]
{\texttt{\href{http://dx.doi.org/#1}{doi:#1}}}
\renewcommand{\MR}[1]
{\href{http://www.ams.org/mathscinet-getitem?mr=#1}{MR#1}}

\end{document}